\title{Classification of Du Val del Pezzo surfaces of Picard rank one in characteristic two and three}
\author{\textsuperscript{1}Tatsuro Kawakami}
\email{\textsuperscript{1}tatsurokawakami0@gmail.com}
\address{\textsuperscript{1}Department of Mathematics, Graduate School of Science, Kyoto University, Kyoto 606-8502, Japan}
\author{\textsuperscript{2}Masaru Nagaoka}
\email{\textsuperscript{2}masaru.nagaoka@gakushuin.ac.jp}
\address{\textsuperscript{2}Gakushuin University, 1-5-1 Mejiro, Toshima-ku, Tokyo 171-8588, Japan}
\def\phi{\varphi}
\def\epsilon{\varepsilon}
\def\mapsto{\longmapsto}
\def\Hom{\operatorname{Hom}}
\def\Spec{\operatorname{Spec}}
\def\PGL{\operatorname{PGL}}
\def\MW{\operatorname{MW}}
\def\Dyn{\operatorname{Dyn}}
\newcommand{\Q}{\mathbb{Q}} 
\newcommand{\C}{\mathbb{C}}
\newcommand{\PP}{\mathbb{P}}
\newcommand{\FF}{\mathbb{F}}
\newcommand{\ZZ}{\mathbb{Z}}
\newcommand{\sO}{\mathcal{O}}
\theoremstyle{plain}
\newtheorem{thm}{Theorem}[section] 
\newtheorem{cor}[thm]{Corollary}
\newtheorem{prop}[thm]{Proposition}
\newtheorem{lem}[thm]{Lemma}
\theoremstyle{definition} 
\newtheorem{defn}[thm]{Definition}
\theoremstyle{remark}
\newtheorem{rem}[thm]{Remark}
\newtheorem{defn and notation}[thm]{Definition and Notation}
\newtheorem*{notation}{Notation}
\keywords{Del Pezzo surfaces; Frobenius splitting; Positive characteristic.}
\subjclass[2010]{Primary 14J26, 13A35; Secondary 14G17, 14J45}
\begin{document}
\tolerance = 9999

\maketitle
\markboth{Tatsuro Kawakami and Masaru Nagaoka}{Du Val del Pezzo surfaces of Picard rank one}

\begin{abstract}
In this paper, we classify Du Val del Pezzo surfaces of Picard rank one in characteristic two and three. 
We also show that if a Du Val del Pezzo surface is Frobenius split, then a general anti-canonical member is smooth.
Furthermore, in characteristic two, it is an ordinary elliptic curve.
\end{abstract}

\tableofcontents

\section{Introduction}
We say that $X$ is a \textit{Du Val del Pezzo surface} if $X$ is a normal projective surface with only Du Val singularities whose anti-canonical divisor is ample.
Du Val del Pezzo surfaces have an important role in algebraic geometry, especially in positive characteristic.
For example, they appear as general fibers of del Pezzo fibrations with terminal singularities in positive characteristic \cite{FS}, although a general fiber is smooth in characteristic zero by the generic smoothness.

Over the field of complex numbers $\C$,
Furushima \cite{Fur} classified the singularities of $X$ of Picard rank $\rho(X)=1$ in terms of corresponding Dynkin diagrams, which we call \textit{the Dynkin type of $X$} in this paper.
For example, we say that $X$ is of type $3A_1+D_4$ if $X$ has three $A_1$-singularities and one $D_4$-singularity. 
In this case, we also write $\Dyn(X) = 3A_1+D_4$ and $X=X(3A_1+D_4)$. 
Later, Ye \cite{Ye} classified isomorphism classes of $X$ over $\C$ of $\rho(X)=1$, and showed that such $X$ is uniquely determined up to isomorphism by $\Dyn(X)$ except for $E_8$, $A_1+E_7$, $A_2+E_6$, and $2D_4$.
Recently, Lacini \cite[Theorem B.7]{Lac} pointed out that the same result as in \cite{Ye} holds in characteristic at least five.

On the other hand, in characteristic two, several Du Val del Pezzo surfaces of Picard rank one have Dynkin types which do not appear in Furushima's list, and we classified such surfaces in \cite[Theorem 1.7 (1)]{KN}.
In particular, the same result as in \cite{Ye} does not hold in characteristic two.
Moreover, in characteristic two or three, Artin \cite{Art} pointed out that Dynkin types do not determine the isomorphism classes of Du Val singularities, and classified isomorphism classes of Du Val singularities via \textit{Artin coindices}.

In this paper, we investigate Du Val del Pezzo surfaces of Picard rank one in characteristic two or three.
Our main result consists of two theorems.
One is Theorem \ref{isom}; combining \cite[Theorem B.7]{Lac} and our study, we obtain the complete classification of Du Val del Pezzo surfaces of Picard rank one in positive characteristic.

\begin{thm}\label{isom}
Let $X$ be a Du Val del Pezzo surface over an algebraically closed field of characteristic $p\geq 0$. 
Suppose that $X$ is singular and the Picard rank of $X$ is one. 
Then the following holds.
\begin{enumerate}
    \item[\textup{(1)}] The Dynkin types of $X$ and the number of the isomorphism classes of the del Pezzo surfaces of the given Dynkin type are listed in Table \ref{Pic1}.
    \item[\textup{(2)}] When $p=2$ $($resp.~$p=3)$, $X$ is uniquely determined up to isomorphism by its Dynkin type with Artin coindices except when its Dynkin type is $D_8$, $2D_4$, $4A_1+D_4$, or $8A_1$ $($resp.~$2D_4$$)$.
\end{enumerate}
    \begin{table}[ht]
\caption{Dynkin types and the number of isomorphism classes of Du Val del Pezzo surfaces of Picard rank one}
  \begin{tabular}{|c|c|c|c|c|c|c|} \hline
\multicolumn{3}{|c||}{Dynkin type}                & \multicolumn{2}{|c|}{$E_8$} & \multicolumn{2}{|c|}{$D_8$}  \\ \hline 
\multicolumn{3}{|c||}{Characteristic}             &$p=2, 3$      & $p \neq 2,3$      & $p=2$     & $p \neq 2$       \\ \hline
\multicolumn{3}{|c||}{No. of isomorphism classes} & $3$          & $2$          & $\infty$  & $1$              \\ \hline\hline
$A_8$ &$A_1+A_7$ & $2A_4$ &$A_1+A_2+A_5$    &$A_3+D_5$ & $4A_2$   & $2A_1+D_6$     \\ \hline
\multicolumn{7}{|c|}{$p \geq 0$}                                                                      \\ \hline
$1$   & $1$    & $1$         &$1$              & $1$      & $1$  & $1$       \\ \hline\hline
 $A_2+E_6$ &$A_1+E_7$         &  $2D_4$  &$2A_1+2A_3$ & $4A_1+D_4$                  &\multicolumn{1}{c|}{$8A_1$}      & $A_7$  \\ \hline
\multicolumn{3}{|c|}{$p\geq 0$}            &$p \neq 2$     & \multicolumn{2}{|c|}{$p=2$}& $p\geq 0$                                                \\ \hline
$2$        &$2$               &     $\infty$   &$1$         & $\infty$                    &\multicolumn{1}{c|}{$\infty$}    & $1$   \\ \hline\hline
 \multicolumn{3}{|c|}{$E_7$}  &\multicolumn{2}{|c|}{$A_1+D_6$} & $A_2+A_5$ & $3A_1+D_4$   \\ \hline
 $p=2$ & $p=3$ & $p\neq 2,3$      &$p=2$ & $p \neq 2$              &\multicolumn{2}{|c|}{$p\geq 0$}                \\ \hline
 $3$   & $2$     & $1$        &$2$    & $1$                    &$1$        &$1$                 \\ \hline\hline
$A_1+2A_3$ &\multicolumn{1}{c|}{$7A_1$} &  \multicolumn{2}{|c|}{$E_6$}&$A_1+A_5$ & \multicolumn{1}{c|}{$3A_2$}& {$2A_1+A_3$} \\ \hline
$p\geq 0$      & \multicolumn{1}{c|}{$p=2$} &  $p=2, 3$ & $p \neq 2,3$         &\multicolumn{2}{|c|}{$p\geq 0$}            & {$p\geq0$}      \\ \hline
$1$        &\multicolumn{1}{c|}{$1$}    & $2$       & $1$             &$1$       & \multicolumn{1}{c|}{$1$}   & {$1$}        \\ \hline\hline
  \multicolumn{2}{|c|}{$D_5$}        & \multicolumn{1}{c|}{$A_4$} & \multicolumn{1}{c|}{$A_1+A_2$}  & $A_1$\\ \cline{1-5}
 $p=2$ & \multicolumn{1}{|c|}{$p\neq 2$} &\multicolumn{1}{c|}{$p\geq 0$}&\multicolumn{1}{c|}{$p\geq 0$}  & $p\geq 0$\\ \cline{1-5}   
  $2$  & \multicolumn{1}{|c|}{$1$}   & \multicolumn{1}{c|}{$1$}& \multicolumn{1}{c|}{$1$}                  &   $1$\\ \cline{1-5}
  \end{tabular}
  \label{Pic1}
\end{table}
\end{thm}
 
\begin{rem}
In \cite[Theorem 1.7 (1)]{KN}, we have shown that $\{7A_1, 8A_1, 4A_1+D_4\}$ is the set of the Dynkin types of Du Val del Pezzo surfaces in characteristic two which are not realizable in characteristic zero.
Theorem \ref{isom} shows that there are also the Dynkin types of Du Val del Pezzo surfaces in characteristic zero which are not realizable in characteristic two; $2A_1+2A_3$ is an example.
\end{rem}

A \textit{Frobenius splitting} (\textit{$F$-splitting}, for short) property is expected to induce similar properties to those of varieties in characteristic zero.
The other main theorem is Theorem \ref{F split Intro}, which supports this expectation. Note that all anti-canonical members of Du Val del Pezzo surfaces can be singular in positive characteristic as we pointed out in \cite[Theorem 1.4]{KN}.
\begin{thm}\label{F split Intro}
Let $X$ be a Du Val del Pezzo surface over an algebraically closed field of characteristic $p>0$.
Suppose that $X$ is $F$-split.
Then a general member of $|-K_X|$ is smooth. 
Moreover, if $p=2$, then a general member of $|-K_X|$ is an ordinary elliptic curve. 
\end{thm}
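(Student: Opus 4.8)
The plan is to prove Theorem~\ref{F split Intro} by exploiting the general theory of Frobenius splittings compatible with anti-canonical members. First I would recall that for a Du Val del Pezzo surface $X$, the anti-canonical linear system $|-K_X|$ is base-point free away from the singularities and gives a morphism to projective space; since $-K_X$ is ample and $X$ has rational (hence Cohen--Macaulay, with trivial dualizing issues resolved by the Du Val condition) singularities, one has $H^1(X, \sO_X)=0$ and the sections of $-K_X$ behave well. The key input is that $F$-splitting of $X$ is governed by a section of $(1-p)K_X = -(1-p)(-K_X)$, i.e.\ by an element $\sigma \in H^0\bigl(X, \sO_X((1-p)K_X)\bigr) \cong H^0\bigl(X, \sO_X((p-1)(-K_X))\bigr)$ whose associated divisor, after the standard splitting-criterion normalization, controls the geometry. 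Concretely I would use the criterion that an effective $\Q$-divisor arising from a splitting that is ``compatibly split'' with a general anti-canonical member forces that member to be reduced and, via adjunction, to inherit a splitting of its own.

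The main steps I would carry out are as follows. \textbf{Step 1:} Translate $F$-splitting into the existence of a splitting divisor $D \in |-(1-p)K_X| = |(p-1)(-K_X)|$ and invoke the Mehta--Ramanathan / Brion--Kumar machinery to arrange, after moving in the linear system, a splitting that is compatible with a general member $C \in |-K_X|$. \textbf{Step 2:} Show that such a general $C$ is reduced and irreducible; reducedness follows because a compatibly split subscheme is reduced, and irreducibility (or at least that the general member is integral) follows from the ampleness of $-K_X$ together with the connectedness coming from $H^1(X,\sO_X)=0$. \textbf{Step 3:} Apply adjunction: since $C \sim -K_X$, we get $K_C = (K_X + C)|_C = \sO_C$, so $C$ is an arithmetic genus one curve, and the induced splitting makes $C$ an $F$-split Gorenstein curve of arithmetic genus one. \textbf{Step 4:} Deduce smoothness of the general $C$ by a Bertini-type argument: the base locus of $|-K_X|$ consists only of (some of) the singular points of $X$, and an $F$-split general member through the smooth locus is smooth because an $F$-split curve of arithmetic genus one with a cusp or other non-nodal singularity would violate the $F$-splitting (a cuspidal cubic is not $F$-split, and a nodal one, though $F$-split, can be avoided generically by moving $C$); here the genericity ensures we miss the finitely many bad points. \textbf{Step 5:} For the characteristic-two refinement, use that a smooth $F$-split genus-one curve in characteristic $p$ is exactly an ordinary elliptic curve (Hasse invariant nonzero $\iff$ ordinary $\iff$ $F$-split for elliptic curves), and a density argument over the parameter space of $|-K_X|$: the ordinary locus is open and, being nonempty by Step~4 combined with the $F$-split structure restricting to general members, is dense.

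The hard part will be \textbf{Step 4} and the precise handling of the base locus and singular points of $X$. The subtlety is that $|-K_X|$ can have base points at the Du Val singularities, and in positive characteristic a general member need not automatically be smooth by Bertini (generic smoothness fails); the $F$-splitting hypothesis is exactly what must be leveraged to rule out the non-nodal degenerations and, more delicately, to ensure the general member avoids the singular points of $X$ in the right way. I expect to need the characterization of which Gorenstein genus-one curve singularities are compatible with $F$-splitting, and to combine this with the fact (from the surface being $F$-split and $-K_X$ ample) that the restriction of the splitting endows the general member with its own splitting. Establishing that an $F$-split arithmetic-genus-one Gorenstein curve with the induced polarization is forced to be \emph{smooth} for a \emph{general} member, rather than merely nodal, is the crux; this likely requires showing that a non-smooth general member would propagate a singularity to a positive-dimensional family, contradicting the structure of the splitting divisor $D \in |(p-1)(-K_X)|$ and the classification results underlying Theorem~\ref{isom}.
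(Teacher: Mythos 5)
There is a genuine gap, and it sits exactly where you predicted the difficulty would be, but it is worse than a technical subtlety: the core mechanism of your Steps 1--3 cannot work. Compatibly split subvarieties of a Frobenius-split variety are \emph{finitely many} (they are the subvarieties cut out by the fixed splitting; one cannot create new ones ``after moving in the linear system''). The only anti-canonical members that inherit a splitting are those contained in the splitting divisor $\Sigma\in|(1-p)K_X|$; when $p=2$ this is a \emph{single} member of $|-K_X|$, and for $p\geq 3$ a general member is not compatibly split in any sense. So ``the induced splitting makes $C$ an $F$-split curve'' fails for general $C$. A decisive sanity check: if your Steps 1--5 were valid, they would show that a general smooth anti-canonical member is ordinary in \emph{every} characteristic, but the paper exhibits (Remark \ref{supersing}) an $F$-split Du Val del Pezzo surface in $p=3$, namely $\{w^2+z^3+x^2y^2z-x^4z+x^6=0\}\subset \PP_k(1,1,2,3)$, all of whose smooth anti-canonical members are supersingular. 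Your argument proves too much, so the propagation-of-splitting step must be wrong. The paper's actual $p=2$ argument uses the one member you do have: it takes the pencil spanned by the splitting section $\Sigma\in|-K_X|$ and a non-ordinary member, eliminates its base locus (the blow-up stays $F$-split by the Lakshmibai--Mehta--Parameswaran criterion, Lemma \ref{F-split;bl-up}, because $\Sigma$ passes through the base points), and then applies Patakfalvi--Zdanowicz to the resulting genus-one fibration with $F$-split total space to force general fibers to be ordinary --- a contradiction.

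Separately, your Step 4 has no mechanism to rule out the scenario in which \emph{every} member of $|-K_X|$ is singular, which genuinely occurs for Du Val del Pezzo surfaces in $p=2,3$ (this is \cite[Theorem 1.4]{KN}, cited in the introduction); ``moving $C$ to avoid nodes'' is circular in that situation, and $F$-splitting of a member cannot exclude nodes anyway since nodal genus-one curves are $F$-split. The paper closes this gap by a completely different, classification-based route: by \cite[Theorem 1.4]{KN} the only candidates (for $p=3$) are the surfaces with minimal resolutions $Y^{\textup{(1)}}_1(E_8)$, $Y^{\textup{(2)}}_1(A_2+E_6)$, $Y^{\textup{(3)}}_1(4A_2)$, and each of these is shown \emph{not} to be $F$-split using the explicit defining equations (Tables \ref{table:NB} and \ref{table:Artin3}) together with Fedder's criterion (Lemma \ref{Fedder}) and the descent/ascent properties of splittings in Remark \ref{remF-split}. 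Your closing sentence gestures at ``the classification results underlying Theorem \ref{isom}'' but supplies no argument; that computation (equations plus Fedder) is not a routine detail --- it is the entire content of the smoothness statement in $p=3$.
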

\begin{rem}
We cannot drop the assumption on characteristic in the latter assertion of Theorem \ref{F split Intro}. Indeed, there exists an $F$-split Du Val del Pezzo surface in $p=3$ such that all smooth anti-canonical members are supersingular elliptic curves.
We refer to Remark \ref{supersing} for the details.
\end{rem}

This paper is structured as follows.
In Section \ref{sec:pre}, we recall some facts on Du Val del Pezzo surfaces, rational genus one fibrations, and Frobenius splittings.

In Section \ref{sec:rho1}, we prove Theorem \ref{isom}.
To show assertion (1), we follow the method used in \cite{Ye}, and use \cite[Theorem 1.4]{KN} and the classification of extremal rational (quasi-)elliptic fibrations \cite{Lang1, Lang2, Ito1, Ito2}.
We also calculate the defining equations of some Du Val del Pezzo surfaces in weighted projective spaces carefully to obtain the assertion (2).

In Section \ref{sec:app}, we prove Theorem \ref{F split Intro}. 
First, we prove the latter assertion by making use of a splitting section of $X$, which is contained in $|-K_X|$ when $p=2$.
After that, we prove the former assertion together with equations of Du Val del Pezzo surfaces which we will determine in Subsection \ref{DVdP} and Section \ref{sec:rho1}.

\begin{notation}
We work over an algebraically closed field $k$ of characteristic $p>0$.
A \textit{variety} means an integral separated scheme of finite type over $k$. 
A \textit{curve} (resp.~a \textit{surface}) means
a variety of dimension one (resp.~two). 
Throughout this paper, we also use the following notation:
\begin{itemize}
\item $E_f$: the reduced exceptional divisor of a birational morphism $f$.
\item $\rho(X)$: the Picard rank of a projective variety $X$.
\item $\MW(X)$: the Mordell-Weil group of a (quasi-)elliptic surface $X$, i.e.,\ the group consisting of sections. 
We refer to \cite[\S 1]{Shioda90} and \cite[\S 2]{Ito1} for the details.
\end{itemize}
\end{notation}

\section{Preliminaries}
\label{sec:pre}

\subsection{Du Val del Pezzo surfaces}\label{DVdP}
In this subsection, we gather basic results on Du Val del Pezzo surfaces.

\begin{defn}\label{GdelPezzo}
Let $X$ be a normal projective surface.
We say that $X$ is a \emph{Du Val del Pezzo surface} if $X$ has only Du Val singularities and $-K_X$ is ample.
We write $\Dyn(X)$ for the formal sum of Dynkin types of singularities on $X$, which we call \textit{the Dynkin type of $X$}.

In characteristic $p>5$, Du Val singularities are taut, i.e., their isomorphism classes are determined by their Dynkin types.
On the other hand, Du Val singularities of type $D_n$ and $E_n$ in $p=2$, $E_n$ in $p=3$, and $E_8$ in $p=5$ are not taut. 
The isomorphism classes of non-taut Du Val singularities are classified and named as $D_n^r$ and $E_n^r$ by Artin \cite[\S 3]{Art}, where $r$ is called the \textit{Artin coindex}. 
We write $\Dyn'(X)$ for the Dynkin type of $X$ with Artin coindices. 
\end{defn}

\begin{lem}\label{basic}
Let $X$ be a Du Val del Pezzo surface of degree $d \coloneqq K_X^2$.
Then the following hold.
\begin{enumerate}\renewcommand{\labelenumi}{$($\textup{\arabic{enumi}}$)$}
\item{A general anti-canonical member is irreducible and reduced. In particular, it is a locally complete intersection curve of arithmetic genus one. Moreover, if $p>3$, then a general anti-canonical member is smooth.}
\item{If $d\geq3$, then $-K_X$ is very ample. }
\item{If $d=4$, then $X$ is isomorphic to a complete intersection of two quadric hypersurfaces in $\PP^4_k$.}
\item{If $d=3$, then $X$ is isomorphic to a cubic surface in $\PP^3_k$.}
\item{If $d=2$, then $|-K_X|$ is basepoint free and $X$ is isomorphic to a weighted hypersurface in $\PP_k(1, 1, 1, 2)$ of degree four.}
\item{If $d=1$, then $|-K_X|$ has the unique base point and $X$ is isomorphic to a weighted hypersurface in $\PP_k(1, 1, 2, 3)$ of degree six.} 
\end{enumerate}
\end{lem}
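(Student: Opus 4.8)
The plan is to work on the minimal resolution $\pi \colon \tilde X \to X$, a weak del Pezzo surface for which $\pi^{\ast}K_X = K_{\tilde X}$ since Du Val singularities are crepant. As Du Val singularities are rational, $R^{j}\pi_{\ast}\sO_{\tilde X} = 0$ for $j>0$, so the projection formula together with the Leray spectral sequence gives $H^{i}(X,\sO_X(-mK_X)) \cong H^{i}(\tilde X,\sO_{\tilde X}(-mK_{\tilde X}))$ for all $i$ and all $m$. The first step is to pin down these groups. Serre duality and the ampleness of $-K_X$ give $H^{2}(X,-mK_X)=0$ for $m\ge 0$, while $H^{1}(X,-mK_X)=0$ holds for $m\ge 0$: for $m=0$ this is the rationality of $X$, and for $m\ge 1$ it follows from the Kawamata--Viehweg-type vanishing available on weak del Pezzo surfaces in positive characteristic, applied to $-mK_{\tilde X}=K_{\tilde X}+(m+1)(-K_{\tilde X})$ with $(m+1)(-K_{\tilde X})$ nef and big. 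Riemann--Roch then yields $h^{0}(X,-mK_X)=1+\tfrac{1}{2}m(m+1)d$ for $m\ge 0$; in particular $\dim|-K_X|=d$.

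For (1), since $d\ge 1$ a general member $C\in|-K_X|$ is a curve, and because $X$ has hypersurface (hence locally complete intersection) singularities, the Cartier divisor $C$ is again locally a complete intersection. Adjunction gives $\omega_C\cong(\omega_X\otimes\sO_X(C))|_C\cong\sO_C$, so $p_a(C)=1$. For the smoothness statement when $p>3$ one applies a Bertini theorem to $|-K_X|$: for $d\ge 2$ the system is basepoint free away from the singular points of $X$, so a general member avoids those finitely many points and is smooth there once the anti-canonical map is separable, which is where the hypothesis $p>3$ enters; the case $d=1$ requires a direct argument on $\tilde X$ because $|-K_X|$ then has a base point.

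For (2)--(6) the strategy is to recover $X$ from the pluri-anti-canonical ring $R=\bigoplus_{m\ge 0}H^{0}(X,-mK_X)$ using the Hilbert function computed above. For $d\ge 3$ one proves $-K_X$ very ample --- classical in characteristic zero and extendable to positive characteristic via a Reider-type criterion on $\tilde X$, which separates points and tangent directions away from the contracted $(-2)$-curves --- giving a closed embedding $X\hookrightarrow\PP^{d}_{k}$. Comparing the values $h^{0}(-2K_X)=1+3d$ and $h^{0}(-3K_X)=1+6d$ with the numbers of degree-two and degree-three monomials then forces $X$ to be cut out by exactly two quadrics when $d=4$ (a complete intersection by a degree count) and by a single cubic when $d=3$, proving (3) and (4). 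For $d=2$ (resp.\ $d=1$) the same bookkeeping shows $R$ needs one extra generator in degree $2$ (resp.\ one in degree $2$ and one in degree $3$) beyond the $h^{0}(-K_X)$ generators in degree $1$, and that the only relation occurs in degree $4$ (resp.\ degree $6$); this identifies $X$ with a weighted hypersurface of degree four in $\PP_k(1,1,1,2)$ (resp.\ degree six in $\PP_k(1,1,2,3)$), while the stated base-locus assertions follow from the Hilbert function together with a direct analysis of $|-K_X|$.

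The main obstacle is the positive-characteristic input in the first and third steps: Kawamata--Viehweg vanishing and Reider's theorem are not valid verbatim in characteristic $p$, so one must invoke the forms of these results known to hold for (weak) del Pezzo surfaces, and, for the smoothness in (1), verify separability of the anti-canonical map --- precisely the point that can, and does, fail for $p\le 3$. Once these are secured, parts (3)--(6) reduce to the finite monomial counts sketched above.
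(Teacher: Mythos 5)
Your overall skeleton --- pass to the minimal resolution, use rationality of Du Val singularities plus vanishing and Riemann--Roch to compute $h^0(X,\sO_X(-mK_X))$, adjunction for the genus statement, and the anticanonical-ring/Hilbert-function bookkeeping for (2)--(6) --- is the standard one; the paper itself gives no argument and simply defers to \cite{BT} and \cite{Kaw}, whose proofs follow essentially this outline, so on those parts you are in agreement with the cited proofs.

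There is, however, a genuine gap at the one point of the lemma that carries real positive-characteristic content: the smoothness of a general member of $|-K_X|$ when $p>3$. You propose ``Bertini plus separability of the anti-canonical map, which is where the hypothesis $p>3$ enters.'' This cannot be the mechanism. For $d\geq 3$ the anti-canonical map is a closed embedding in \emph{every} characteristic, including $2$ and $3$ (this is part (2) of the same lemma), hence trivially separable; and for $d=2$, $p=3$ the anti-canonical double cover $X\to \PP^2_k$ is automatically separable as well. Yet Bertini-type smoothness of general members is precisely what fails in small characteristic: as this paper emphasizes (quoting \cite[Theorem 1.4]{KN}), there exist Du Val del Pezzo surfaces in $p=2,3$ --- for instance $X(4A_2)$ in Proposition \ref{defeqNBdeg1}, or the surface $w^2+yz^3+xy^3=0$ of Table \ref{table:Artin2} --- all of whose anti-canonical members are singular, and in characteristic $p$ a base-point-free or even very ample linear system need not have any smooth member at all. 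The input actually used in the cited proof of \cite[Theorem 2.15]{BT} is Tate's theorem on genus change under inseparable base extension, equivalently the non-existence of quasi-elliptic fibrations in characteristic $>3$: the generic member of $|-K_X|$, viewed as a curve over the function field of the linear system, is regular and geometrically integral of arithmetic genus one, and if it failed to be smooth the geometric genus would drop by a positive multiple of $(p-1)/2$, forcing $p\leq 3$. Without this (or an equivalent) argument, your proof of (1) does not go through. A secondary caveat: your appeal to a ``Reider-type criterion'' for very ampleness in (2) is also delicate in characteristic $p$, since Reider's method rests on Bogomolov instability, which has low-characteristic exceptions; the characteristic-free route is the classical direct analysis of $|-K_Y|$ on the weak del Pezzo surface $Y$, which is what the references carry out.
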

\begin{proof}
We refer to \cite[Propositions 2.12, 2.14, and Theorem 2.15]{BT} and \cite[Proposition 4.6]{Kaw2} for the proof.
\end{proof}


\subsection{Rational genus one fibrations}

In this subsection, we compile the results on rational extremal elliptic surfaces by Lang \cite{Lang1, Lang2} and rational quasi-elliptic surfaces by Ito \cite{Ito1, Ito2}, which we will use in Section \ref{sec:rho1}.
Since they are not minimal, they have at least one $(-1)$-curve, which is a section.

\begin{thm}[\cite{Lang1, Lang2, Ito3}]\label{extell}
When $p=2$ $(\textit{resp.~}p=3)$, the configurations of singular fibers of extremal rational elliptic surfaces 
and the order of their Mordell-Weil groups are listed in Table \ref{extell2} $($resp.~Table \ref{extell3}$)$ using Kodaira's notation.

Moreover, there is a unique extremal rational elliptic surface with each configuration of singular fibers in the table except for the type I (resp.~type VI).
In this case, there are infinitely many isomorphism classes of extremal rational elliptic surfaces with that configuration of singular fibers.
    \begin{table}[ht]
\caption{The singular fibers and the order of their Mordell-Weil groups of extremal rational ellptic surfaces in $p=2$}
  \begin{tabular}{|c|c|c||c|c|c|} \hline
        &Singular fibers      &$|\mathrm{MW}(X)|$ & & Singular fibers      &$|\mathrm{MW}(X)|$ \\ \hline \hline
       \textup{I}   &$\textup{I}_4^*$                 &$2$  & \textup{VII}  & $\textup{IV}, \textup{IV}^*$            & $3$  \\ \hline   
       \textup{II}  &$\textup{II}^*$                  &$1$  & \textup{VIII} &$\textup{IV}, \textup{I}_2, \textup{I}_6$         & $6$  \\ \hline   
       \textup{III} &$\textup{III}, \textup{I}_8$     &$4$  & \textup{IX}   &$\textup{IV}^*, \textup{I}_1, \textup{I}_3$       & $3$ \\ \hline                    
       \textup{IV}  &$\textup{I}_1^*, \textup{I}_4$   &$4$  & \textup{SI}   &$\textup{I}_9, \textup{I}_1, \textup{I}_1, \textup{I}_1$   & $3$    \\ \hline        
       \textup{V}   &$\textup{III}^*, \textup{I}_2$   &$2$  & \textup{SII}  & $\textup{I}_5, \textup{I}_5, \textup{I}_1, \textup{I}_1$  & $5$     \\ \hline
       \textup{VI}  &$\textup{II}^*, \textup{I}_1$    &$1$  & \textup{SIII} & $\textup{I}_3, \textup{I}_3, \textup{I}_3, \textup{I}_3$  & $9$     \\ \hline
  \end{tabular}
  \label{extell2}
\end{table}
    \begin{table}[ht]
\caption{The singular fibers and the order of their Mordell-Weil groups of extremal rational ellptic surfaces in $p=3$}
  \begin{tabular}{|c|c|c||c|c|c|} \hline
        & Singular fibers &$|\mathrm{MW}(X)|$ &  & Singular fibers &$|\mathrm{MW}(X)|$ \\ \hline \hline
       \textup{I}   & $\textup{II}^*$                              & $1$ & \textup{VIII} & $\textup{I}_1^*, \textup{I}_1, \textup{I}_4$     & $4$ \\ \hline
       \textup{II}  & $\textup{II}, \textup{I}_9$                  & $3$ & \textup{IX}   & $\textup{I}_2^*, \textup{I}_2, \textup{I}_2$     & $4$ \\ \hline
       \textup{III} & $\textup{IV}^*, \textup{I}_3$                & $3$ & \textup{X}    & $\textup{I}_4^*, \textup{I}_1, \textup{I}_1$     & $2$ \\ \hline
       \textup{IV}  & $\textup{II}^*, \textup{I}_1$                & $1$ & \textup{XI}   & $\textup{III}^*, \textup{I}_1, \textup{I}_2$     & $2$ \\ \hline                
       \textup{V}   & $\textup{III}^*, \textup{III}$               & $2$ & \textup{SI}   & $\textup{I}_8, \textup{I}_2, \textup{I}_1, \textup{I}_1$ & $4$ \\ \hline
       \textup{VI}  & $\textup{I}_0^*, \textup{I}_0^*$             & $4$ & \textup{SII}  &$\textup{I}_5, \textup{I}_5, \textup{I}_1, \textup{I}_1$   & $5$ \\ \hline
       \textup{VII} & $\textup{III}, \textup{I}_3, \textup{I}_6$   & $6$ & \textup{SIII} &$\textup{I}_4, \textup{I}_4, \textup{I}_2, \textup{I}_2$   & $8$ \\ \hline
  \end{tabular}
  \label{extell3}
\end{table}

\end{thm}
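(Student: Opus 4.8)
The plan is to turn the classification into a finite lattice-theoretic and combinatorial problem via the Shioda--Tate formula together with the Noether relation $e(X)=12$, and then to settle existence and the count of isomorphism classes through explicit Weierstrass models. Throughout, $X \to \PP^1_k$ denotes a relatively minimal rational elliptic surface with a section, so that $\rho(X)=10$ and $e(X)=12$. First I would apply the Shioda--Tate formula $\rho(X) = 2 + \rank\MW(X) + \sum_v (m_v - 1)$, where $m_v$ is the number of irreducible components of the fiber $F_v$. Since $\rho(X)=10$, the surface is extremal (that is, $\rank\MW(X)=0$) precisely when $\sum_v (m_v - 1) = 8$. Writing $r_v := m_v - 1$ for the rank of the root lattice $T_v$ attached to the Kodaira type of $F_v$, this says that the fiber lattice $T = \bigoplus_v T_v$ has rank $8$; as $T$ sits inside the frame lattice, which is $E_8$ for a rational elliptic surface in every characteristic, the admissible $T$ are exactly the rank-$8$ root lattices that embed in $E_8$ with finite index, a finite list obtained from the Borel--de Siebenthal procedure on $\widetilde{E}_8$.

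Next I would impose the Euler-number constraint $\sum_v e(F_v) = 12$. A direct bookkeeping with Ogg's formula gives $e(F_v) = r_v + 1$ for a multiplicative fiber $\textup{I}_n$ and $e(F_v) = r_v + 2 + \delta_v$ for an additive fiber, where $\delta_v \ge 0$ is the wild (Swan) contribution. Combining this with $\sum_v r_v = 8$ yields the clean identity
\[ m + 2a + \sum_v \delta_v = 4, \]
where $m$ and $a$ denote the numbers of multiplicative and additive singular fibers. For $p>3$ one has $\delta_v = 0$, and this recovers the classical list; the whole point in $p=2,3$ is that additive fibers may be wildly ramified, so $\sum_v \delta_v > 0$ is allowed and configurations impossible over $\C$ appear---most strikingly a single $\textup{I}_4^*$ (with $\sum_v\delta_v = 2$) in $p=2$ and a single $\textup{II}^*$ in both $p=2,3$. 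I would enumerate all pairs $(T, \{\delta_v\})$ satisfying both constraints and discard those admitting no integral Weierstrass realization.

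For each surviving configuration I would prove existence by exhibiting a Weierstrass equation $y^2 + a_1 xy + a_3 y = x^3 + a_2 x^2 + a_4 x + a_6$ over $k[t]$ and running Tate's algorithm in the relevant characteristic to confirm the fiber types and the wild parts $\delta_v$. Uniqueness up to isomorphism is checked by reducing the coefficients to a normal form and verifying that the combined action of admissible coordinate changes and of $\PGL_2$ on the base leaves no free parameter; conversely, for the type $\textup{I}_4^*$ configuration in $p=2$ and the type $2\textup{I}_0^*$ configuration in $p=3$ a genuine modulus survives in the normal form, producing the stated infinite families. Finally, since $X$ is extremal, $\MW(X)$ is finite and equals $E_8/T$, whence $|\MW(X)| = \sqrt{\lvert \det T\rvert}$ using $\det E_8 = 1$; this reproduces every entry of Tables \ref{extell2} and \ref{extell3} and can be cross-checked by writing down the torsion sections explicitly.

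The main obstacle is the third step in characteristics $2$ and $3$. The lattice theory underlying the first two steps is characteristic-free, so the novelty---and the difficulty---lies entirely in realizability: Tate's algorithm and the computation of $\delta_v$ behave very differently in small characteristic, so the wild ramification must be tracked by hand for each candidate, and the delicate point is the normal-form analysis proving that each admissible configuration is realized by exactly one surface, except for the two exceptional one-parameter families, rather than by appeal to any general rigidity statement.
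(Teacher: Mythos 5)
Your proposal cannot be checked against an internal argument, because the paper does not prove this theorem: it is quoted as a known result, with the proof residing entirely in the cited works of Lang \cite{Lang1, Lang2} and Ito \cite{Ito3}. Measured against those references, your outline is essentially a faithful reconstruction of the method actually used there (and in the Miranda--Persson classification over $\C$): Shioda--Tate reduces extremality to a rank-$8$ root lattice $T$ of finite index in the frame lattice $E_8$; Ogg's formula with Swan conductors gives the constraint $m + 2a + \sum_v \delta_v = 4$; existence and uniqueness are settled by Weierstrass normal forms and Tate's algorithm; and $|\MW(X)| = \sqrt{|\det T|}$ via $\MW(X) \cong E_8/T$. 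All of your checkable assertions are correct, including the wild contribution $\delta = 2$ for the isolated $\mathrm{I}_4^*$ ($p=2$) and $\mathrm{II}^*$ fibers, the Mordell--Weil orders in both tables, and the location of the two infinite families ($\mathrm{I}_4^*$ in $p=2$, $\mathrm{I}_0^* + \mathrm{I}_0^*$ in $p=3$). The one caveat worth stating plainly: as written, this is a program rather than a proof, and everything that distinguishes $p = 2,3$ from $p > 3$ is concentrated in the steps you defer. For instance, the configurations $\mathrm{III}^* + \mathrm{III}$ in $p=2$ and $\mathrm{IV}^* + \mathrm{IV}$ in $p=3$ pass both the lattice test ($E_7 \oplus A_1$, resp.\ $E_6 \oplus A_2$) and the Euler-number test with $\delta = 0$, yet must be discarded because those additive fiber types are forced to be wildly ramified in the respective characteristic; dually, one must prove that the configurations in the tables with $\delta > 0$ actually occur, and carry out the normal-form analysis yielding uniqueness or a genuine modulus. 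That fiber-by-fiber bookkeeping of wild ramification and the case-by-case Weierstrass analysis constitute the bulk of the cited papers, so completing your outline would amount to reproducing them.
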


\begin{thm}[\cite{Ito1, Ito2} and {\cite[Remark 5.23]{KN}}]\label{q-ell}
When $p=2$ $($resp.~$p=3)$, the configurations of reducible fibers of rational quasi-elliptic surfaces 
and the order of their Mordell-Weil groups are listed in Table \ref{q-ell2} $($resp.~Table \ref{q-ell3}$)$ using Kodaira's notation. \begin{table}[ht]
\caption{The singular fibers and the order of their Mordell-Weil groups of rational quasi-ellptic surfaces in $p=2$}
  \begin{tabular}{|c|c|c||c|c|c|} \hline
        &Singular fibers      &$|\mathrm{MW}(X)|$ & & Singular fibers      &$|\mathrm{MW}(X)|$ \\ \hline \hline
       \textup{(a)} &$\textup{II}^*$            &$1$  & \textup{(e)}  &$\textup{I}^*_2, \textup{III}, \textup{III}$       & $4$  \\ \hline   
       \textup{(b)} &$\textup{I}^*_4$          &$2$  & \textup{(f)}  &$\textup{I}^*_0$ and four $\textup{III}$  & $8$  \\ \hline  
       \textup{(c)} &$\textup{III}^*, \textup{III}$     &$2$  & \textup{(g)}  &eight $\textup{III}$             & $16$ \\ \hline       
       \textup{(d)} &$\textup{I}^*_0, \textup{I}^*_0$   &$4$  &      &   &    \\ \hline 
  \end{tabular}
  \label{q-ell2}
\end{table}
    \begin{table}[ht]
\caption{The singular fibers and the order of their Mordell-Weil groups of rational quasi-ellptic surfaces in $p=3$}
  \begin{tabular}{|c|c|c|} \hline
        & Singular fibers &$|\mathrm{MW}(X)|$ \\ \hline \hline
       \textup{(1)} & $\textup{II}^*$          & $1$  \\ \hline
       \textup{(2)} & $\textup{IV}^*, \textup{IV}$      & $3$  \\ \hline
       \textup{(3)} & four $\textup{IV}$       & $9$  \\ \hline
  \end{tabular}
  \label{q-ell3}
\end{table}

Moreover, 
there is a unique rational quasi-elliptic surface with each configuration of reducible fibers in the table except when $p=2$ and the type is one of $(d)$, $(f)$, and $(g)$.
The isomorphism classes of rational quasi-elliptic surfaces of each of types $(d)$ and $(f)$ $(\textit{resp.~type }(g))$ correspond to the closed points of $\mathcal{D}_n / \PGL(n+1, \FF_2)$ with $n=1$ $(\textit{resp.~}n=2)$, where $\mathcal{D}_n \subset \PP^n_k$ is the complement of the union of all the hyperplane sections defined over $\FF_2$.
\end{thm}

\begin{lem}\label{lem:ellconfig}
Let $S$ be an extremal rational elliptic surfaces and $T$ the root lattice associated with the reduced fiber of $S$.
Then the intersection matrix $M$ of all the sections and the irreducible components of reduced fibers on $S$ is uniquely determined by $T$.
\end{lem}

\begin{proof}
By the classification of reducible fibers of elliptic fibrations by Kodaira \cite{Kod}, $T$ determines the intersection matrix of irreducible components of reduced fibers.
The number of sections is also determined by $T$ and sections are disjoint from each other by \cite[Main theorem and Proposition 5.4]{OS}.
Thus it suffices to show that $T$ determines how sections intersect with reducible fibers.

Let $\{F_v\}_{v \in R}$ be the set of reducible fibers of $S$ and fix a zero-section $O$ of $S$.
Then \cite[Theorem 8.6]{Shioda90} shows that 
\begin{align}\label{eq:height}
    1+\delta_{PQ}=\sum_{v \in R} \mathrm{contr}_v(P,Q)
\end{align}
for each two sections $P$ and $Q$, where $\delta_{PQ}$ is the Kronecker delta and $\mathrm{contr}_v(*,*)$ is a rational number determined by the configuration of $O$, $P$, $Q$, and $F_v$ (see [\textit{ibid}, (8.16)] for more detail).
Then an easy computation shows that the equation (\ref{eq:height}) has a unique solution for $\mathrm{contr}_v(P,Q)$ and $M$ is uniquely determined.

We give the precise computation only for the case where $T$ is a root lattice of type $A_8$; the other case are left to the reader.
In this case, $S$ contains exactly three sections $O$, $P_1$, and $P_2$, and one reducible fiber $F_v$, which is of type $\textup{I}_9$.
Take the irreducible decomposition $F_v=\sum_{i=0}^{8} \Theta_{i}$ such that $O=\Theta_{0}$ and $(\Theta_{i} \cdot \Theta_{i+1})=1$ for $0 \leq i \leq 7$.
For $i=1,2$, take $n_i$ such that $(P_i \cdot \Theta_{n_i})=1$.
We may assume that $n_1 \leq n_2$.
Then $\mathrm{contr}_v(P_1,P_2)=n_1(9-n_2)/9$ and $\mathrm{contr}_v(P_i,P_i)=n_i(9-n_i)/9$ for $i=1,2$ by \cite[(8.16)]{Shioda90}.
Now (\ref{eq:height}) gives $(n_1, n_2)=(3,6)$ and hence $M$ is uniquely determined.
\end{proof}


\subsection{Frobenius splittings}
In this subsection, we review the fundamental properties of $F$-split varieties.

\begin{defn}\label{GFS}
Let $X$ be a normal variety. 
We say that $X$ is \textit{$($globally$)$ $F$-split} if the Frobenius map $\mathcal{O}_X \to F_{*}\mathcal{O}_X$ splits as an $\mathcal{O}_X$-module homomorphism. 
We call $\sigma \in \Hom_{\sO_X}(F_{*}\sO_X, \sO_X)\cong H^0(X, \sO_X((1-p)K_X))$ \textit{a splitting section} if $\sigma$ induces a splitting of $\sO_X\to F_{*}\sO_X$. We often call the divisor $\Sigma\in |(1-p)K_X|$ corresponding to $\sigma$ a splitting section of $X$.
\end{defn}

\begin{rem}\label{remF-split}\,
\begin{enumerate}
\item
Let $f\colon X\to Y$ be a projective morphism between normal varieties such that $f_{*}\sO_X=\sO_Y$. If $X$ is $F$-split, then so is $Y$ (see \cite[1.1.9 Lemma]{fbook}).

\item
Let $\pi\colon Y\to X$ be a birational projective morphism between normal $\Q$-Gorenstein varieties such that $f^{*}K_X-K_Y$ is an effective $\Q$-divisor. If $X$ is $F$-split, then so is $Y$ (see \cite[Proposition 1.4]{HWY}). In particular, the minimal resolution of a Du Val del Pezzo surface preserves the $F$-splitting property.

\item
Let $C$ be an elliptic curve. Then $C$ is $F$-split if and only if $C$ is ordinary (see \cite[1.3.9 Remark (ii)]{fbook}).
\end{enumerate}
\end{rem}

The following lemma is an immediate consequence of Fedder's criterion \cite[Proposition 1.7]{Fed}.
\begin{lem}\label{Fedder}
Fix coordinates $[x: y: z: w]$ of $\PP_k(1,1,2,3)$ (resp.~$\PP_k(1,1,1,2), \PP^3_k$) and let $f(x,y,z,w)=0$ be the defining equation of a Du Val del Pezzo surface $X$ of degree one (resp.~two, three).
Then $X$ is $F$-split if and only if $f^{p-1}\not \in (x^p,y^p,z^p,w^p)$.
\end{lem}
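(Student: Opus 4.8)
The plan is to apply Fedder's criterion, which for a hypersurface $X = \{f = 0\}$ in a (weighted) projective space characterizes $F$-splitting of $X$ in terms of the local behavior of a power of $f$. The statement to prove is essentially the translation of this criterion into the setting of the three anti-canonical models of Lemma \ref{basic}: degree one gives a sextic in $\PP_k(1,1,2,3)$, degree two a quartic in $\PP_k(1,1,1,2)$, and degree three a cubic in $\PP^3_k$. In each case $X$ is a hypersurface in a variety $P$ that is (at worst quasi-)smooth, so the key point is that $F$-splitting of $X$ is detected by a single polynomial congruence.

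First I would recall the precise content of Fedder's criterion \cite[Proposition 1.7]{Fed}. For a hypersurface singularity $R/(f)$ with $R$ a polynomial (or graded) ring over $k$ of characteristic $p$, the quotient is $F$-pure precisely when $f^{p-1} \notin \mathfrak{m}^{[p]} = (x^p, y^p, z^p, w^p)$, the Frobenius power of the maximal (irrelevant) ideal. The passage from local $F$-purity to global $F$-splitting requires an argument that the affine cone over $X$ being $F$-pure is equivalent to $X$ being globally $F$-split; this is where I would invoke the standard correspondence between global $F$-splitting of a projectively normal variety and $F$-purity of its section ring (section ring under the polarization $-K_X$). Concretely, by Definition \ref{GFS} the splitting sections live in $H^0(X, \sO_X((1-p)K_X))$, and these are exactly the degree-$(p-1)$ pieces governing the graded Fedder computation on the cone.

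The main steps, then, are: (i) identify the section ring of $X$ with respect to $-K_X$ as the coordinate ring of the weighted cone, using that $-K_X$ is the tautological polarization in each of the three models from Lemma \ref{basic}; (ii) observe that $X$ is globally $F$-split if and only if this section ring is $F$-pure, which holds because $P$ is $F$-split and the embedding is by a very ample (or globally generated after passing to the cone) line bundle; and (iii) apply Fedder's criterion to the single defining equation $f$ in the polynomial ring $k[x,y,z,w]$ (graded by the weights $1,1,2,3$; $1,1,1,2$; or $1,1,1,1$), reducing $F$-splitting to the condition $f^{p-1} \notin (x^p,y^p,z^p,w^p)$. The weighted grading is compatible throughout because $f$ is homogeneous of the appropriate degree and the congruence is itself graded.

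The step I expect to be the main obstacle is rigorously justifying Fedder's criterion in the weighted (non-regular ambient) setting rather than the classical $\PP^n$ case: the weighted projective spaces $\PP_k(1,1,2,3)$ and $\PP_k(1,1,1,2)$ are not smooth, so one must be careful that Fedder's criterion applies on the level of the cone $\Spec$ of the section ring, which is itself a hypersurface in an \emph{affine space} $\mathbb{A}^4_k = \Spec k[x,y,z,w]$ and hence genuinely a hypersurface in a regular ring. Once the cone is presented this way the ambient ring is regular and Fedder applies directly; the verification that this cone structure matches $-K_X$ and that $F$-purity of the cone is equivalent to global $F$-splitting of $X$ is the technical heart, and I would lean on the standard theory of $F$-split cones (e.g. as in \cite{fbook}) to close the gap rather than reproving it.
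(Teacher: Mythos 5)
Your proposal is correct and follows essentially the same route as the paper: apply Fedder's criterion to the affine cone $k[x,y,z,w]/(f)$, which is a hypersurface in a genuinely regular (polynomial) ring regardless of the weights, and then identify that cone with the section ring $\bigoplus_{m\geq 0}H^0(X,\sO_X(-mK_X))$ of the ample divisor $-K_X$ to transfer $F$-purity of the cone to global $F$-splitting of $X$ (the paper cites \cite[Proposition 4.10]{Smithbook} for this last equivalence, which is the precise reference for the ``standard theory of $F$-split cones'' you invoke).
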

\begin{proof}
Let $R\coloneqq k[x,y,z,w]/(f)$.
By Fedder's criterion \cite[Proposition 1.7]{Fed}, $\Spec\, R$ is $F$-split if and only if $f^{p-1}\not \in (x^p,y^p,z^p,w^p)$.
Since $R\cong \bigoplus_{m\geq0} H^0(X, \sO_X(-mK_X))$ and $-K_X$ is ample, it follows that $X$ is $F$-split by \cite[Proposition 4.10]{Smithbook}.
\end{proof}

\begin{lem}[\textup{\cite[Proposition 2.1]{LMP}}]\label{F-split;bl-up}
Let $X$ be a normal $F$-split variety and $Z$ a smooth closed subscheme of codimension $d$ which is contained in the smooth locus of $X$. 
Let $\Sigma \in |(1-p)K_X|$ be a splitting section. 
If $\Sigma$ passes through $Z$ with multiplicity at least $(d-1)(p-1)$, then the blow-up of $X$ along $Z$ is $F$-split.
\end{lem}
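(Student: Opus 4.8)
The plan is to construct an explicit splitting section on the blow-up by correcting the pullback of $\Sigma$ by a multiple of the exceptional divisor, and then to verify that it splits Frobenius via a density argument. Write $\pi \colon \widetilde{X} \to X$ for the blow-up of $X$ along $Z$ and let $E$ denote its exceptional divisor. Since $Z$ is smooth and contained in the smooth locus of $X$, the morphism $\pi$ is an honest smooth-center blow-up near $E$, so $\widetilde{X}$ is an integral normal variety and the adjunction formula holds on the nose:
\[
K_{\widetilde{X}} = \pi^{*}K_X + (d-1)E.
\]
Multiplying by $(1-p)$ gives $(1-p)K_{\widetilde{X}} = \pi^{*}\big((1-p)K_X\big) - (p-1)(d-1)E$, which already exhibits the relevant numerics.

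Next I would produce the candidate section. Let $m \coloneqq \operatorname{mult}_Z \Sigma \geq (d-1)(p-1)$ be the multiplicity of $\Sigma$ along $Z$, so that near $E$ one has $\pi^{*}\Sigma = \Sigma' + mE$, where $\Sigma'$ is the strict transform carrying no component along $E$. Set
\[
\widetilde{\Sigma} \coloneqq \pi^{*}\Sigma - (p-1)(d-1)E = \Sigma' + \big(m - (p-1)(d-1)\big)E.
\]
Because $\Sigma \sim (1-p)K_X$, the divisor $\widetilde{\Sigma}$ lies in $|(1-p)K_{\widetilde{X}}|$ by the displayed formula, and the hypothesis $m \geq (d-1)(p-1)$ guarantees that the coefficient of $E$ is nonnegative, so $\widetilde{\Sigma}$ is effective. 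Thus $\widetilde{\Sigma}$ defines a genuine element $\widetilde{\sigma} \in \Hom_{\sO_{\widetilde{X}}}(F_{*}\sO_{\widetilde{X}}, \sO_{\widetilde{X}}) \cong H^0(\widetilde{X}, \sO_{\widetilde{X}}((1-p)K_{\widetilde{X}}))$. This is precisely where the multiplicity assumption is used: matching the discrepancy $(d-1)$ against the twist $(1-p)$ is exactly what forces the bound $(d-1)(p-1)$ and ensures effectivity, hence that $\widetilde{\Sigma}$ represents an actual global section and not merely a divisor class.

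Finally I would check that $\widetilde{\sigma}$ is actually a splitting. On the dense open set $U \coloneqq \widetilde{X} \setminus E$ the map $\pi$ restricts to an isomorphism onto $X \setminus Z$, and since the $E$-terms vanish there, $\widetilde{\Sigma}|_{U}$ is carried to $\Sigma|_{X \setminus Z}$ under this isomorphism. Because the identification $\Hom(F_{*}\sO, \sO) \cong H^0((1-p)K)$ is natural and compatible with restriction to open subsets, after rescaling the section defining $\widetilde{\Sigma}$ the homomorphism $\widetilde{\sigma}|_{U}$ agrees with the original splitting $\sigma|_{X \setminus Z}$; in particular $\widetilde{\sigma}(F_{*}1) = 1$ on $U$. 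As $U$ is dense in the variety $\widetilde{X}$, the two global sections $\widetilde{\sigma}(F_{*}1)$ and $1$ of $\sO_{\widetilde{X}}$ coincide everywhere, so $\widetilde{\sigma}$ splits the Frobenius and $\widetilde{X}$ is $F$-split.

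I expect the main obstacle to be the bookkeeping in the middle step: one must confirm that $\widetilde{\Sigma}$ genuinely represents an honest global section in the correct linear system, that the strict-transform identity $\pi^{*}\Sigma = \Sigma' + mE$ is valid (this is where smoothness of $Z$ inside the smooth locus is essential), and that the Grothendieck-duality identification of the Hom-sheaf with the anticanonical twist is functorial enough to commute with restriction to $U$. Once these compatibilities are pinned down, the conclusion follows formally from normality and density.
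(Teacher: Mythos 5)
Your proof is correct and is essentially the argument of \cite[Proposition 2.1]{LMP}, to which the paper's own ``proof'' simply defers: pull back the splitting section, use $K_{\widetilde{X}}=\pi^{*}K_X+(d-1)E$ and the multiplicity hypothesis to absorb $(p-1)(d-1)E$ and land in $|(1-p)K_{\widetilde{X}}|$, then conclude by density of $\widetilde{X}\setminus E$ that the evaluation at $1$ is identically $1$. No gaps worth flagging.
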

\begin{proof}
We refer to the proof of \cite[Proposition 2.1]{LMP} for the details.
\end{proof}

\section{Classification of Du Val del Pezzo surfaces in characteristic two and three}
\label{sec:rho1}

In this section, we prove Theorem \ref{isom}.
When $p=0$ (resp.\ $p > 3$), the assertion (1) has already proven by \cite[Theorem 1.2]{Ye} (resp.\ \cite[Theorem B.7]{Lac}).
For this reason, we assume that $p=2$ or $3$ in this section.
The proof is similar in spirit to \cite{Ye}.
However, we have to follow Ye's method carefully because the classification of extremal rational elliptic surfaces in $p=2$ or $3$ is different from that in $p > 3$ and rational quasi-elliptic surfaces appear in $p=2$ or $3$.
We also investigate $\Dyn'(X)$ of some Du Val del Pezzo surfaces $X$ to get the assertion (2).

\subsection{Reduction to genus one fibrations}

We first recall the relation between Du Val del Pezzo surfaces of Picard rank one and genus one fibrations.

\begin{defn}
For a smooth weak del Pezzo surface $Y$ and the union $D$ of all the $(-2)$-curves in $Y$, 
a curve $E \subset Y$ is called \textit{a nice exceptional curve} (NEC for short) if $E$ is a $(-1)$-curve such that $(E \cdot D)=1$.
\end{defn}

\begin{lem}\label{reduction}
Let $X$ be a Du Val del Pezzo surface of $\rho(X)=1$ and $d=K_X^2 \leq 7$. 
Let $Y \to X$ be the minimal resolution.
Then there are an extremal rational elliptic surface or a rational quasi-elliptic surface $Y_0$ and blow-downs $\{f_i \colon Y_{i-1} \to Y_{i}\}_{1 \leq i \leq d}$ of $(-1)$-curves such that $Y_d=Y$ and $E_{f_i}$ is an NEC for $2 \leq i \leq d$.
\end{lem}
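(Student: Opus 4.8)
The plan is to argue by induction on the degree $d=K_X^2$, building $Y_0$ as the top of a tower of blow-ups of $Y$ in which every exceptional curve created is an NEC. First I would record the lattice-theoretic meaning of the hypothesis: since $Y$ is the minimal resolution, it is a weak del Pezzo surface of degree $d$ with $\rho(Y)=10-d$, and $\rho(X)=1$ is equivalent to saying that the $(-2)$-curves of $Y$ (the components of $D$) span a sublattice of $\Pic(Y)$ of the maximal possible rank $9-d$. Throughout I would use that, because $-K_Y$ is nef and big, the irreducible curves $C$ with $(C\cdot K_Y)=0$ are exactly the $(-2)$-curves.

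For the base case $d=1$, the surface $Y$ already carries eight $(-2)$-curves and $|-K_Y|$ has a unique base point $p$ by Lemma \ref{basic}(6). Each $(-2)$-curve $C$ satisfies $(C\cdot(-K_Y))=0$, hence is disjoint from a general anticanonical member and so avoids $p$; thus $p$ lies off $D$. I would take $f_1\colon Y_0\to Y$ to be the blow-up of $p$. Then $-K_{Y_0}$ is base-point free with $(-K_{Y_0})^2=0$ and defines a fibration $Y_0\to\PP^1$ whose fibres lie in $|-K_{Y_0}|$; a general fibre has arithmetic genus one by Lemma \ref{basic}(1), so $Y_0$ is elliptic or quasi-elliptic, and it is relatively minimal because every $(-1)$-curve $C$ has $(C\cdot(-K_{Y_0}))=1$ and so cannot lie in a fibre. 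The eight $(-2)$-curves of $Y$ are untouched by the blow-up, survive as fibre components, and span a rank-$8$ lattice; hence $\MW(Y_0)$ has rank zero and $Y_0$ is extremal, as required.

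For the inductive step $d\ge 2$, assuming the statement in degree $d-1$, I would choose a $(-1)$-curve $L$ on $Y$ (these exist for $d\le 7$) and a general point $p\in L$, generality ensuring that $p$ lies on no $(-2)$-curve and on no $(-1)$-curve other than $L$. Let $f_d\colon Y_{d-1}\to Y$ be the blow-up of $p$, so $-K_{Y_{d-1}}=f_d^{*}(-K_Y)-E_{f_d}$. A short intersection computation shows this class is nef (for general $p$ no irreducible curve acquires negative intersection, since a curve would need multiplicity exceeding its $(-K_Y)$-degree at $p$) and big (its square is $d-1>0$), so $Y_{d-1}$ is a weak del Pezzo surface of degree $d-1$. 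The strict transform $\tilde L$ becomes a new $(-2)$-curve meeting the old configuration exactly as $L$ did, and it is independent of $f_d^{*}\Pic(Y)$; hence the $(-2)$-curves of $Y_{d-1}$ attain the maximal rank $10-d=9-(d-1)$, so contracting them yields a Du Val del Pezzo surface of Picard rank one and degree $d-1$. Since $E_{f_d}$ meets $D_{Y_{d-1}}$ only at the single point $E_{f_d}\cap\tilde L$, it is an NEC. Applying the induction hypothesis to $Y_{d-1}$ produces the tower $Y_0\to\cdots\to Y_{d-1}$ with the NEC property, and prepending $f_d$ completes the tower $Y_0\to\cdots\to Y_d=Y$.

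The delicate points, and where I expect the real work to be, are twofold. First is the control of the $(-2)$-configuration under the blow-up in the inductive step: one must check that a general point of a $(-1)$-curve produces \emph{exactly one} new $(-2)$-curve, so that the rank rises by precisely one and maximality (equivalently $\rho=1$) is preserved, while $-K$ stays nef and big; this is what pins down the "general point on a $(-1)$-curve" recipe and simultaneously forces the NEC condition. Second is the base case, where one must correctly certify $Y_0$ as an \emph{extremal} rational elliptic or quasi-elliptic surface, i.e. verify relative minimality and that the eight preserved $(-2)$-curves genuinely fill out a rank-$8$ fibre-component lattice. I expect the first of these, the bookkeeping of intersections ensuring nefness and the single new NEC at each stage, to be the main obstacle.
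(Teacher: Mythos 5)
Your overall route is in the same spirit as the paper's: the paper proves this lemma by a one-line reduction to \cite[Theorem B.6]{Lac}, observing that by Lemma \ref{basic} (1) a general anticanonical member still has arithmetic genus one in characteristic $2$ or $3$, so that argument goes through once $Y_0$ is allowed to be quasi-elliptic; your tower (blow up the anticanonical base point in degree one, a general point of a $(-1)$-curve in degree $\ge 2$) makes that construction explicit, and your base case is complete and correct. The genuine gap is in the inductive step, exactly at the claim you defer as ``where I expect the real work to be'': that for general $p \in L$ the class $-K_{Y_{d-1}}$ stays nef and \emph{exactly one} new $(-2)$-curve appears. Your stated generality conditions ($p$ off the $(-2)$-curves and off the other $(-1)$-curves) do not by themselves give this. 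Indeed, a new $(-2)$-curve on $Y_{d-1}$ other than $\tilde L$ is the strict transform of an irreducible $C'$ with $\operatorname{mult}_p C' = m$, $(-K_Y \cdot C') = m$, $C'^2 = m^2-2$; the Hodge index theorem together with $p_a(C') \ge \binom{m}{2}$ rules out all cases except $d=2$, $m=2$, $C' \in |-K_Y|$, i.e.\ an anticanonical member with a double point at $p$. In the paper's setting this threat is real: on the minimal resolution of $X(7A_1)$ of Proposition \ref{defeq7A_1} (characteristic $2$, an inseparable double cover of $\PP^2$) \emph{every} anticanonical member is singular, and the singular points sweep out a dense subset of the surface; blowing up such a singular point keeps $-K$ nef and raises the $(-2)$-rank to the maximum, but the exceptional curve meets the new $(-2)$-curve twice, so it is not an NEC (this is precisely how the $8A_1$ surfaces in Table \ref{Pic1} arise from $7A_1$). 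So ``a general point avoids the singular points of members'' is simply false here, and the step cannot be settled by the kind of general-position bookkeeping you anticipate.

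The missing idea that rescues your construction is that the blown-up point lies on a $(-1)$-\emph{curve}: if an irreducible $C' \in |-K_Y|$ were singular at $p \in L$, then $i_p(C',L) \ge \operatorname{mult}_p C' \ge 2$, contradicting $C' \cdot L = -K_Y \cdot L = 1$; and a non-irreducible member singular at a general $p \in L$ must contain $2L$ (its components through a general point of $L$ can only be $L$ itself, since $(-2)$-curves and the other $(-1)$-curves miss $p$), so its components contribute no new $(-2)$-curve besides $\tilde L$. With this observation, and the same Hodge-index computation to confirm nefness, your inductive step closes in every characteristic, so the approach is sound and repairable. But as written, the one step where characteristic $2$ and $3$ genuinely differ from the classical situation — the very phenomenon this paper is about — is asserted rather than proved.
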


\begin{proof}
Since a general anti-canonical member is a curve of arithmetic genus one by Theorem \ref{basic} (1), 
the same proof as in 
\cite[Theorem B.6]{Lac} remains valid for this case after admitting for $Y_0$ to be a rational quasi-elliptic surface.
\end{proof}

\begin{rem}\label{rem:matrix}
We follow the notation of Lemma \ref{reduction}.
Note that we can calculate the intersection matrix of negative curves on $Y=Y_{d}$.
In particular, we can calculate the Dynkin type of $Y$ and the number of NECs on $Y$.

Indeed, it is determined by that of $Y_0$ and the choice of $\{f_i\}_{1 \leq i \leq d}$.
When $Y_0$ is an elliptic surface, comparing Theorem \ref{extell} and \cite[Theorem 4.1]{MP86}, there is an elliptic surface $Y_0'$ in characteristic zero with the same root lattice associated with the reduced fibers.
Since each negative curve is a section or is contained in a reducible fiber of the elliptic fibration, the intersection matrix of negative curves of $Y_0$ is the same as that of $Y_0'$ by Lemma \ref{lem:ellconfig}.
On the other hand, when $Y_0$ is a quasi-elliptic surface, we obtain the intersection matrix of negative curves on $Y_0$ by \cite[Figures 1--4]{KN}.
\end{rem}

In the remaining of this section, we follow the notation in Tables \ref{extell2}--\ref{q-ell3}.
In addition, we use the following notation.

\begin{defn}
We denote by $Y^e_d(*)$ the successive blow-down of $(-1)$-curves as in Lemma \ref{reduction} from an extremal rational elliptic surface or a rational quasi-elliptic surface of type $e$ such that the anti-canonical degree is $d$ and the configuration of $(-2)$-curves is the Dynkin diagram $*$.
\end{defn}

\subsection{Characteristic two}

In this subsection, we treat the case where $p=2$.
Let $X$ be a singular Du Val del Pezzo surface of Picard rank one.
By the same proof as in \cite[pp.14--15]{Ye}, we obtain that $K_X^2 \neq 7, 9$, and $X$ is the quadric cone in $\PP^3_k$ when $K_X^2=8$.
For this reason, we assume that $K_X^2 \leq 6$.
We follow the notation of Lemma \ref{reduction}.
By Remark \ref{rem:matrix}, we already know the intersection matrix of negative curves on $Y_i$ for $i\geq 0$.

We start with the case where $K_X^2=1$.
The pairs of $Y_0$ and $Y=Y_1$ are listed as in Table \ref{table:deg1},
where $n$ is the number of the NECs on $Y_1$.
    \begin{table}[ht]
\caption{Isomorphism classes of the pairs $(Y_0, Y_1)$ in $p=2$}
  \begin{tabular}{|c|c|c||c|c|c|} \hline
       Type of $Y_0$       &$Y_1$        &$n$& Type of $Y_0$      &$Y_1$        &$n$\\ \hline \hline
       I    &$Y^{\textup{I}}_1(D_8)$              &$2$& SII     &$Y^{\textup{SII}}_1(2A_4)$     &$0$\\ \hline
       II   &$Y^{\textup{II}}_1(E_8)$           &$1$& SIII    &$Y^{\textup{SIII}}_1(4A_2)$    &$0$\\ \hline
       III  &$Y^{\textup{III}}_1(A_1+A_7)$      &$1$& (a)     &$Y^{\textup{(a)}}_1(E_8)$      &$1$\\ \hline
       IV   &$Y^{\textup{IV}}_1(A_3+D_5)$       &$1$& (b)     &$Y^{\textup{(b)}}_1(D_8)$      &$2$\\ \hline
       V    &$Y^{\textup{V}}_1(A_1+E_7)$        &$1$& (c)     &$Y^{\textup{(c)}}_1(A_1+E_7)$  &$1$\\ \hline
       VI   &$Y^{\textup{VI}}_1(E_8)$           &$1$& (d)     &$Y^{\textup{(d)}}_1(2D_4)$     &$2$\\ \hline
       VII  &$Y^{\textup{VII}}_1(A_2+E_6)$      &$1$& (e)     &$Y^{\textup{(e)}}_1(2A_1+D_6)$ &$1$\\ \hline
       VIII &$Y^{\textup{VIII}}_1(A_1+A_2+A_5)$ &$0$& (f)     &$Y^{\textup{(f)}}_1(4A_1+D_4)$ &$1$\\ \hline
       IX   &$Y^{\textup{IX}}_1(A_2+E_6)$       &$1$& (g)     &$Y^{\textup{(g)}}_1(8A_1)$     &$0$\\ \hline
       SI   &$Y^{\textup{SI}}_1(A_8)$           &$2$&           &                      & \\ \hline
  \end{tabular}
  \label{table:deg1}
\end{table}

The isomorphism class of $Y_1$ is independent of the choice of $E_{f_1}$ by virtue of the $\MW(Y_0)$-action.
On the other hand, $Y_0$ is the blow-up of $Y_1$ at the base point of $|-K_{Y_1}|$.
Hence there is one-to-one correspondence between the isomorphism classes of $Y_0$ and those of $Y_1$.
Theorems \ref{extell} and \ref{q-ell} now show the assertion (1) of Theorem \ref{isom} in the case where $K_X^2=1$ and $p=2$.

Next, we consider the case where $K_X^2=2$.
Then $Y=Y_2$, which is the blow-down of an NEC in one of $Y_1$ listed in Table \ref{table:deg1}.
The pairs of $Y_1$ and $Y_2$ are listed as in Table \ref{table:deg2}, where $n$ is the number of the NECs on $Y_2$. 

    \begin{table}[ht]
\caption{Isomorphism classes of the pairs $(Y_1, Y_2)$ in $p=2$}
  \begin{tabular}{|c|c|c||c|c|c|} \hline
       $Y_1$                &$Y_2$                  &$n$& $Y_1$      &$Y_2$                             &$n$\\ \hline \hline
       $Y^{\textup{I}}_1(D_8)$         & $Y^{\textup{I}}_2(A_7)$          &$2$& $Y^{\textup{SI}}_1(A_8)$       & $Y^{\textup{SI}}_2(A_2+A_5)$   &$1$\\ \hline
       $Y^{\textup{I}}_1(D_8)$         & $Y^{\textup{I}}_2(A_1+D_6)$      &$1$& $Y^{\textup{(a)}}_1(E_8)$      & $Y^{\textup{(a)}}_2(E_7)$      &$1$\\ \hline
       $Y^{\textup{II}}_1(E_8)$      & $Y^{\textup{II}}_2(E_7)$       &$1$& $Y^{\textup{(b)}}_1(D_8)$      & $Y^{\textup{(b)}}_2(A_7)$      &$2$\\ \hline
       $Y^{\textup{III}}_1(A_1+A_7)$ & $Y^{\textup{III}}_2(A_1+2A_3)$ &$0$& $Y^{\textup{(b)}}_1(D_8)$      & $Y^{\textup{(b)}}_2(A_1+D_6)$  &$1$\\ \hline
       $Y^{\textup{IV}}_1(A_3+D_5)$  & $Y^{\textup{IV}}_2(A_1+2A_3)$  &$0$& $Y^{\textup{(c)}}_1(A_1+E_7)$  & $Y^{\textup{(c)}}_2(A_1+D_6)$  &$1$\\ \hline
       $Y^{\textup{V}}_1(A_1+E_7)$   & $Y^{\textup{V}}_2(A_1+D_6)$    &$1$& $Y^{\textup{(d)}}_1(2D_4)$     & $Y^{\textup{(d)}}_2(3A_1+D_4)$ &$0$\\ \hline
       $Y^{\textup{VI}}_1(E_8)$      & $Y^{\textup{VI}}_2(E_7)$       &$1$& $Y^{\textup{(e)}}_1(2A_1+D_6)$ & $Y^{\textup{(e)}}_2(3A_1+D_4)$ &$0$\\ \hline
       $Y^{\textup{VII}}_1(A_2+E_6)$ & $Y^{\textup{VII}}_2(A_2+A_5)$  &$1$& $Y^{\textup{(f)}}_1(4A_1+D_4)$ & $Y^{\textup{(f)}}_2(7A_1)$     &$0$\\ \hline
       $Y^{\textup{IX}}_1(A_2+E_6)$  & $Y^{\textup{IX}}_2(A_2+A_5)$   &$1$&                       & &\\ \hline
  \end{tabular}
  \label{table:deg2}
\end{table}

\begin{rem}\label{deg2rem}
The isomorphism class of $Y^{\textup{SI}}_2(A_2+A_5)$ is independent of the choice of $E_{f_2}$ because $\MW(Y_0^{\textup{SI}}) \cong \ZZ/3\ZZ$ and two NECs in $Y^{\textup{SI}}_1(A_8)$ are the images of the sections in $Y_0^{\textup{SI}}$.
The isomorphism class of $Y^{\textup{(d)}}_2(3A_1+D_4)$ is also
independent of the choice of $E_{f_2}$ 
because it maps to the other NEC in $Y^{\textup{(d)}}_1(2D_4)$ by the involution $\tau$ on $Y^{\textup{(d)}}_1(2D_4)$ constructed as in the proof of \cite[Proposition 5.17]{KN}.
\end{rem}

\begin{lem}\label{deg2isom}
We have the following isomorphisms:
\begin{enumerate}
    \item[\textup{(1)}] $Y^{\textup{I}}_2(A_7) \cong Y^{\textup{(b)}}_2(A_7)$.
    \item[\textup{(2)}] $Y^{\textup{I}}_2(A_1+D_6) \cong Y^{\textup{V}}_2(A_1+D_6)$.
    \item[\textup{(3)}] $Y^{\textup{III}}_2(A_1+2A_3) \cong Y^{\textup{IV}}_2(A_1+2A_3)$.
    \item[\textup{(4)}] $Y^{\textup{VII}}_2(A_2+A_5) \cong Y^{\textup{IX}}_2(A_2+A_5) \cong Y^{\textup{SI}}_2(A_2+A_5)$.
    \item[\textup{(5)}] $Y^{\textup{(b)}}_2(A_1+D_6) \cong Y^{\textup{(c)}}_2(A_1+D_6)$.
    \item[\textup{(6)}] $Y^{\textup{(d)}}_2(3A_1+D_4) \cong Y^{\textup{(e)}}_2(3A_1+D_4)$.
\end{enumerate}
\end{lem}

\begin{proof}
We give the proof only for the assertions (1), (2), (5), and (6): the proof of the assertions (3) and (4) run as in \cite[Claim 4.5 (4) and (2)]{Ye} respectively.

\noindent (1): Let $Z_{\alpha}$ be the extremal rational elliptic surface of type I whose j-invariant is $\alpha$.
Since each $Y^{\textup{I}}_2(A_7)$ is constructed from $Z_{\alpha}$ by blowing down all the sections, there is one to one correspondence between the isomorphism classes of $Y^{\textup{I}}_2(A_7)$ and those of $Z_{\alpha}$.
Note that $\alpha \in k^*$ by \cite[p.432]{Lang2}.
In particular, $Z_{\alpha}$ is a unique rational elliptic surface containing an $\textup{I}^*_4$-fiber and a smooth fiber $j$-invariant $\alpha$.

On the other hand, there is a unique rational quasi-elliptic surface of type (b).
Since it has exactly two sections, there is a unique $Y'=Y^{\textup{(b)}}_2(A_7)$.
Hence it suffices to show that for any $\alpha \in k^*$, we can construct $Z_\alpha$ by blowing up at some two points in $Y^{\textup{(b)}}_2(A_7)$.

Fix coordinates $[x: y: z]$ of $\PP^2_k$ and take $C \coloneqq \{x^3+y^2z=0\}$.
By \cite[Remark 4]{Ito2}, $Y'$ is also obtained from $\PP^2_k$ by blowing up four points on $C$ infinitely near $[0:1:0]$ and three points on $C$ infinitely near $[1:1:1]$.
Moreover, the push forward of $|-K_{Y'}|$ to $\PP^2_k$ is generated by $x^3+y^2z, (x+z)^2z$ and $(x+z)(y+z)z$.
Now take $C_\alpha$ as the strict transform of $\{x^3+y^2z+\alpha^{\frac18}(x+z)(y+z)z=0\} \subset \PP^2_k$ in $Y'$.
Then $C_\alpha$ is a smooth member of $|-K_{Y'}|$ whose $j$-invariant equals $\alpha$.
Let $Z'$ be the blow-up of $Y'$ at the intersection of $C_\alpha$ and two NECs on $Y'$.
Then it contains a $\textup{I}^*_4$-fiber and the strict transform of $C_\alpha$ as a fiber.
Therefore $Z' \cong Z_{\alpha}$ and $Y^{\textup{I}}_2(A_7) \cong Y^{\textup{(b)}}_2(A_7)$.

\noindent (2): Let $Z \to Y^{\textup{I}}_2(A_1+D_6)$ be the blow-up at a general point of the unique NEC.
Then $|-K_Z|$ has a smooth member and $Z$ contains eight $(-2)$-curves whose configuration is the Dynkin diagram $A_1+E_7$.
Since anti-canonical members of $Y^{\textup{(c)}}_1(A_1+E_7)$ are all singular, Table \ref{table:deg1} now shows that $Z \cong Y^{\textup{V}}_1(A_1+E_7)$.
Hence $Y^{\textup{I}}_2(A_1+D_6) \cong Y^{\textup{V}}_2(A_1+D_6)$.

\noindent (5): It follows from \cite[Proposition 5.4 and Remark 5.5]{KN}.

\noindent (6): It follows from \cite[Proposition 5.7 and Remark 5.8]{KN}.
\end{proof}

\begin{lem}\label{deg2isom-2}
We have the following:
\begin{enumerate}
    \item[\textup{(1)}] $Y^{\textup{I}}_2(A_1+D_6) \not \cong Y^{\textup{(b)}}_2(A_1+D_6)$.
    \item[\textup{(2)}] $Y^{\textup{(a)}}_2(E_7) \not \cong Y^{\textup{II}}_2(E_7) \text{ and } Y^{\textup{(a)}}_2(E_7) \not \cong Y^{\textup{VI}}_2(E_7)$.
\end{enumerate}
\end{lem}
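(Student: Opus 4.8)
The plan is to separate these surfaces by a single invariant: the existence of a smooth member of the anti-canonical system. All five surfaces have degree $K^2=2$ (their Dynkin types have $9-d=7$ vertices), so by Lemma \ref{basic} the system $|-K|$ is a base-point-free net realizing each surface as a degree-four weighted hypersurface in $\PP_k(1,1,1,2)_{[x:y:z:w]}$, and the projection $\phi\colon[x:y:z:w]\mapsto[x:y:z]$ is a finite degree-two morphism to $\PP^2$. The invariant ``general anti-canonical member is smooth'' is then exactly the separability of $\phi$. I claim the elliptic-derived surfaces $Y^{\textup{I}}_2(A_1+D_6)$, $Y^{\textup{II}}_2(E_7)$, $Y^{\textup{VI}}_2(E_7)$ admit a smooth anti-canonical member, while the quasi-elliptic-derived surfaces $Y^{\textup{(b)}}_2(A_1+D_6)$ and $Y^{\textup{(a)}}_2(E_7)$ have every anti-canonical member singular; since an isomorphism identifies anti-canonical systems, both non-isomorphisms in (1) and (2) follow at once. (This invariant does not separate $Y^{\textup{II}}_2(E_7)$ from $Y^{\textup{VI}}_2(E_7)$, but that is not needed here.)

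For the elliptic side I would lift to the degree-one surface $Y_1$ lying one blow-down below, namely $Y^{\textup{I}}_1(D_8)$, $Y^{\textup{II}}_1(E_8)$, or $Y^{\textup{VI}}_1(E_8)$. On each such $Y_1$ the pencil $|-K_{Y_1}|$ is precisely the elliptic fibration inherited from $Y_0$, so a general member $C_1$ is a smooth elliptic curve. The blow-down $f_2\colon Y_1\to Y_2$ contracts an NEC $E_2$, and since $C_1\cdot E_2=-K_{Y_1}\cdot E_2=1$, a general $C_1$ meets $E_2$ transversally at a single smooth point of $C_1$; hence $f_2(C_1)\cong C_1$ is a smooth member of $|-K_{Y_2}|$. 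This step is routine once the transversality is recorded.

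For the quasi-elliptic side the relevant $Y_1$ is $Y^{\textup{(b)}}_1(D_8)$ or $Y^{\textup{(a)}}_1(E_8)$, on which $|-K_{Y_1}|$ is a quasi-elliptic pencil, so \emph{every} member is a cuspidal cubic; pushing down shows every member of the sub-pencil of $|-K_{Y_2}|$ through the point $f_2(E_2)$ is singular. The difficulty is that a general member of the full net avoids this point and is not a fibre, so I must still exclude smooth members elsewhere. The clean resolution is to show that $\phi\colon Y_2\to\PP^2$ is purely inseparable, equivalently that the defining equation in $\PP_k(1,1,1,2)$ takes the shape $w^2+f_4(x,y,z)=0$ with no term linear in $w$. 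Granting this, each fibre $\phi^{-1}(\ell)$ over a line $\ell$ is a genus-one curve mapping purely inseparably and two-to-one onto $\ell\cong\PP^1$, hence has rational normalization and is a (cuspidal) singular curve; so no anti-canonical member is smooth.

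The main obstacle is therefore establishing this inseparability, and I would do it by an explicit computation of the defining equation, in the spirit of Propositions \ref{defeq7A_1} and \ref{defeqNBdeg1}. Using the blow-down to $\PP^2$ (equivalently the quasi-elliptic model), I would choose generators of $H^0(\sO(-K))$ and $H^0(\sO(-2K))$ adapted to the curve of cusps of the quasi-elliptic fibration and verify that the single relation among them carries no $w$-linear term; this vanishing is the algebraic shadow of the fact that the generic fibre of a quasi-elliptic pencil in characteristic two is the inseparable double cover $w^2=(\text{cubic})$. Running the same computation for the elliptic-derived surfaces instead produces a genuine term $w\,q(x,y,z)$, confirming separability and giving an equation-level second proof of the elliptic side. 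Verifying that the $w$-linear term vanishes exactly in the quasi-elliptic cases is the crux of the argument.
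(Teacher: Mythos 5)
Your separating invariant is exactly the paper's: whether the anti-canonical system contains a smooth member. The paper's own proof, however, is a two-line citation --- \cite[Remark 5.5]{KN} for (1) and \cite[Remark 5.2]{KN} for (2), which record that a general anti-canonical member of $Y^{\textup{I}}_2(A_1+D_6)$, $Y^{\textup{II}}_2(E_7)$, $Y^{\textup{VI}}_2(E_7)$ is smooth while every anti-canonical member of $Y^{\textup{(b)}}_2(A_1+D_6)$ and $Y^{\textup{(a)}}_2(E_7)$ is singular --- whereas you attempt to prove both halves from scratch. Your elliptic half is complete and correct: on $Y^{\textup{I}}_1(D_8)$, $Y^{\textup{II}}_1(E_8)$, $Y^{\textup{VI}}_1(E_8)$ a general anti-canonical member is the isomorphic image of a smooth fiber of the extremal elliptic fibration, it meets the contracted NEC transversally in a single point since $(-K_{Y_1}\cdot E_{f_2})=1$, and so descends to a smooth member of $|-K_{Y_2}|$. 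Your quasi-elliptic half also has the right mechanism: if the anti-canonical double cover is purely inseparable, then every member of $|-K|$ has degree-two purely inseparable function field over $k(\PP^1)$, hence is rational by L\"uroth, hence singular, being of arithmetic genus one. But the load-bearing claim --- that the defining equations of $Y^{\textup{(a)}}_2(E_7)$ and $Y^{\textup{(b)}}_2(A_1+D_6)$ in $\PP_k(1,1,1,2)$ carry no $w$-linear term --- is only announced as a computation you ``would'' perform, not carried out, and as written this is a genuine gap. The claim is true, and the paper itself establishes it later for a different purpose: Table \ref{table:Artin2} in the proof of Proposition \ref{Artin2} gives $w^2+yz^3+xy^3=0$ for $Y^{\textup{(a)}}_2(E_7)$ and $w^2+xyz^2+y^3z=0$ for $Y^{\textup{(b)}}_2(A_1+D_6)$. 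Granting that table, your argument closes and has the merit of being independent of \cite{KN}, at the price of the explicit elimination computations.

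Two minor points. Your parenthetical claim that smoothness of a general member ``is exactly'' separability of the double cover is never used in the direction separable $\Rightarrow$ smooth member (which is not obvious in characteristic two, where branch loci behave wildly); your elliptic-side argument rightly bypasses it. Also, not every member of a quasi-elliptic pencil is a cuspidal cubic (reducible fibers occur); what you need is only that every member is singular, which is correct.
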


\begin{proof}
A general anti-canonical member of $Y^{\textup{I}}_2(A_1+D_6)$ is smooth and that of $Y^{\textup{(b)}}_2(A_1+D_6)$ is singular by \cite[Remark 5.5]{KN}. 
Hence we have the assertion (1).
Similarly, \cite[Remark 5.2]{KN} gives the assertion (2).
\end{proof}

We will show that $Y^{\textup{II}}_2(E_7) \not \cong Y^{\textup{VI}}_2(E_7)$ in Lemma \ref{lem:E_7}.
In conclusion, there are $10$ isomorphism classes of $X$ with $K_X^2=2$.

Next, we deal with the case where $K_X^2=3$.
Then $Y=Y_3$, which is the blow-down of an NEC in one of $Y_2$ listed in Table \ref{table:deg2}.
The pairs of $Y_2$ and $Y_3$ are listed as in Table \ref{table:deg3}, where $n$ is the number of the NECs on $Y_3$.

    \begin{table}[ht]
\caption{Isomorphism classes of the pairs $(Y_2, Y_3)$ in $p=2$}
  \begin{tabular}{|c|c|c||c|c|c|} \hline
       $Y_2$                &$Y_3$              &$n$& $Y_2$                 &$Y_3$                 &$n$\\ \hline \hline
       $Y^{\textup{I}}_2(A_7)$         & $Y^{\textup{I}}_3(A_1+A_5)$  &$1$& $Y^{\textup{VII}}_2(A_2+A_5)$  & $Y^{\textup{VII}}_3(3A_2)$    &$0$\\ \hline
       $Y^{\textup{II}}_2(E_7)$      & $Y^{\textup{II}}_3(E_6)$   &$1$& $Y^{\textup{(a)}}_2(E_7)$      & $Y^{\textup{(a)}}_3(E_6)$     &$1$\\ \hline
       $Y^{\textup{V}}_2(A_1+D_6)$     & $Y^{\textup{V}}_3(A_1+A_5)$  &$1$& $Y^{\textup{(b)}}_2(A_1+D_6)$  & $Y^{\textup{(b)}}_3(A_1+A_5)$ &$1$\\ \hline
       $Y^{\textup{VI}}_2(E_7)$      & $Y^{\textup{VI}}_3(E_6)$   &$1$&       &   &\\ \hline
  \end{tabular}
  \label{table:deg3}
\end{table}

\begin{rem}\label{deg3rem}
The isomorphism class of $Y^{\textup{I}}_3(A_1+A_5)$ is independent of the choice of $E_{f_3}$ because the $\MW(Y_0)$-action naturally descends to $Y^{\textup{I}}_2(A_7)$, which sends one NEC to the other.
\end{rem}

\begin{lem}\label{deg3isom}
We have the isomorphisms \[Y^{\textup{V}}_3(A_1+A_5) \cong Y^{\textup{I}}_3(A_1+A_5) \cong Y^{\textup{(b)}}_3(A_1+A_5).\]
\end{lem}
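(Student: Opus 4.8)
The statement to prove is the chain of isomorphisms
\[
Y^{\textup{V}}_3(A_1+A_5) \cong Y^{\textup{I}}_3(A_1+A_5) \cong Y^{\textup{(b)}}_3(A_1+A_5).
\]
The plan is to bootstrap off the degree-two isomorphisms already established in Lemma \ref{deg1isom}, and propagate them down one more blow-down using the rigidity of the NEC being contracted. The guiding principle throughout Ye's method (and the degree-one and degree-two steps above) is that \emph{the isomorphism class of the next surface in the tower is independent of the choice of the NEC contracted}, provided one can exhibit an automorphism (typically coming from the Mordell--Weil action or a geometric involution) that permutes the relevant NECs transitively.

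\textbf{First isomorphism.} For $Y^{\textup{V}}_3(A_1+A_5) \cong Y^{\textup{I}}_3(A_1+A_5)$, I would start from Lemma \ref{deg1isom}(2), which gives $Y^{\textup{I}}_2(A_1+D_6) \cong Y^{\textup{V}}_2(A_1+D_6)$. Both degree-three surfaces in question are obtained from these (isomorphic) degree-two surfaces by contracting the unique NEC recorded in Table \ref{table:deg3} (in each case $n=1$ on $Y_2$, so there is a single NEC to contract). Since the two starting surfaces are isomorphic and each carries a \emph{unique} NEC, any isomorphism between them must carry the one NEC to the other; contracting on both sides therefore yields isomorphic surfaces. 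This is the cleanest of the three comparisons precisely because uniqueness of the NEC removes any choice.

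\textbf{Second isomorphism.} For $Y^{\textup{I}}_3(A_1+A_5) \cong Y^{\textup{(b)}}_3(A_1+A_5)$, the analogous input is Lemma \ref{deg1isom}(1), $Y^{\textup{I}}_2(A_7) \cong Y^{\textup{(b)}}_2(A_7)$. Here the situation is subtler: the degree-two surfaces $Y^{\textup{I}}_2(A_7)$ and $Y^{\textup{(b)}}_2(A_7)$ each carry \emph{two} NECs (the entry $n=2$ in Table \ref{table:deg2}), so the contraction $Y_2 \to Y_3$ a priori depends on which NEC is contracted. The key step is to verify that the isomorphism class of the resulting $Y_3$ is independent of this choice, exactly as recorded for $Y^{\textup{I}}_3(A_1+A_5)$ in Remark \ref{deg3rem}: the $\MW(Y_0)$-action for the type-I elliptic surface descends to $Y^{\textup{I}}_2(A_7)$ and interchanges the two NECs, so the two possible contractions give isomorphic surfaces. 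Combining this with the degree-two isomorphism of Lemma \ref{deg1isom}(1) yields $Y^{\textup{I}}_3(A_1+A_5) \cong Y^{\textup{(b)}}_3(A_1+A_5)$.

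\textbf{Main obstacle.} The delicate point, and where I expect to spend the most care, is the choice-independence in the second isomorphism. One must check that the isomorphism $Y^{\textup{I}}_2(A_7) \cong Y^{\textup{(b)}}_2(A_7)$ from Lemma \ref{deg1isom}(1) is compatible with the NEC-interchanging symmetry on \emph{both} sides, so that contracting corresponding NECs on the two surfaces produces matching results; equivalently, one must see that the single resulting Dynkin type $A_1+A_5$ with its Artin coindex is realized by a unique surface reachable from both towers. Since Remark \ref{deg3rem} already supplies the transitivity on the type-I side via the Mordell--Weil action, it suffices to observe that the isomorphism of Lemma \ref{deg1isom}(1) identifies the two NEC-pairs, after which the two contractions on the type-(b) side are identified with the (choice-independent) contraction on the type-I side. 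This reduces the whole statement to the degree-two isomorphisms already in hand together with Remark \ref{deg3rem}, with no new computation required.
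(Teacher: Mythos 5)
Your overall strategy --- deducing the degree-three isomorphisms from Lemma \ref{deg1isom} (1) and (2) together with the choice-independence of Remark \ref{deg3rem} --- is exactly the paper's route (its proof is the single line ``It follows from Lemma \ref{deg1isom} (1) and (2)''). However, your execution rests on a misreading of the tables, and the misreading conceals the one step that genuinely needs an argument. In Table \ref{table:deg3}, $Y^{\textup{I}}_3(A_1+A_5)$ is the contraction of an NEC on $Y^{\textup{I}}_2(A_7)$, which has \emph{two} NECs (the column $n$ counts NECs on $Y_3$, not on $Y_2$), while $Y^{\textup{V}}_3(A_1+A_5)$ and $Y^{\textup{(b)}}_3(A_1+A_5)$ are the contractions of the unique NECs on $Y^{\textup{V}}_2(A_1+D_6)$ and $Y^{\textup{(b)}}_2(A_1+D_6)$ respectively. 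So in your first comparison the premise that $Y^{\textup{I}}_3(A_1+A_5)$ arises from $Y^{\textup{I}}_2(A_1+D_6)$ is not ``recorded in Table \ref{table:deg3}'', and in your second comparison the surface you actually produce is the contraction of an NEC on $Y^{\textup{(b)}}_2(A_7)$, which is not how $Y^{\textup{(b)}}_3(A_1+A_5)$ is defined. Both halves therefore need the same bridging fact, which you never state or prove: within type I (resp.\ type (b)), the degree-three surface reached through $Y_2(A_7)$ is isomorphic to the one reached through $Y_2(A_1+D_6)$.

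This is not a cosmetic point. The paper's lemma immediately after Lemma \ref{deg1isom} shows $Y^{\textup{I}}_2(A_1+D_6) \not\cong Y^{\textup{(b)}}_2(A_1+D_6)$; combined with Lemma \ref{deg1isom} (2) this gives $Y^{\textup{V}}_2(A_1+D_6) \not\cong Y^{\textup{(b)}}_2(A_1+D_6)$, so no argument that stays on the $A_1+D_6$ route can ever link $Y^{\textup{V}}_3$ with $Y^{\textup{(b)}}_3$: one is forced to switch to the $A_7$ route inside each type, and that switch is precisely the missing step. (By contrast, the ``compatibility of the isomorphism with the NEC-interchange'' that you flag as the main obstacle is automatic: any abstract isomorphism carries NECs to NECs, and Remark \ref{deg3rem} already gives choice-independence on one side.) The gap is fillable: on $Y^{m}_1(D_8)$, $m \in \{\textup{I}, \textup{(b)}\}$, the two NECs are disjoint --- one is the image of the second section of $Y^{m}_0$, the other the image of the fibre component met by the contracted section, and the second section (a torsion section meeting a far component of the $\textup{I}_4^*$ fibre) is disjoint from both --- hence the two contractions commute, after contracting either NEC the image of the other is again an NEC, and the two routes land on the same degree-three surface. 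With that commutation inserted, your two comparisons do combine, via Lemma \ref{deg1isom} (2) and (1), into $Y^{\textup{V}}_3(A_1+A_5) \cong Y^{\textup{I}}_3(A_1+A_5) \cong Y^{\textup{(b)}}_3(A_1+A_5)$; without it, neither half of your argument connects to the surfaces actually named in the lemma.
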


\begin{proof}
It follows from Lemma \ref{deg2isom} (1) and (2).
\end{proof}

\begin{lem}\label{E_6}
It holds that $Y^{\textup{(a)}}_3(E_6) \cong Y^{\textup{II}}_3(E_6)$ and $Y^{\textup{(a)}}_3(E_6) \not \cong Y^{\textup{VI}}_3(E_6)$.
\end{lem}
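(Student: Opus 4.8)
The plan is to establish the two claims of Lemma~\ref{E_6} by distinguishing the three surfaces $Y^{\textup{(a)}}_3(E_6)$, $Y^{\textup{II}}_3(E_6)$, and $Y^{\textup{VI}}_3(E_6)$ via the geometry of their anti-canonical members, exactly as the authors have been doing throughout this subsection. By Lemma~\ref{basic}~(4), each of these degree-three Du Val del Pezzo surfaces (equivalently, their minimal-resolution models contracting the NEC) embeds as a cubic surface in $\PP^3_k$, so the positive isomorphism should be proved by exhibiting a common construction, while the non-isomorphism should follow from an invariant that separates them—most naturally the smoothness type of a general anti-canonical member, since $Y_0^{\textup{(a)}}$ is quasi-elliptic (hence all anti-canonical members are singular/cuspidal) whereas $Y_0^{\textup{II}}$ and $Y_0^{\textup{VI}}$ are genuinely elliptic.

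\textbf{The isomorphism $Y^{\textup{(a)}}_3(E_6) \cong Y^{\textup{II}}_3(E_6)$.} First I would recall that the rational quasi-elliptic surface of type (a) and the extremal rational elliptic surface of type II both carry a single reducible fiber of type $\textup{II}^*$, and both have trivial Mordell--Weil group by Theorems~\ref{extell} and~\ref{q-ell}. The strategy mirrors Lemma~\ref{deg1isom}~(2): I would produce $Y^{\textup{(a)}}_3(E_6)$ from $\PP^2_k$ by the explicit blow-up recipe (as in \cite[Remark 4]{Ito2}) along points on the cuspidal cubic $C = \{x^3 + y^2 z = 0\}$, compute the pushforward of $|-K|$ to $\PP^2_k$, and then show that blowing up at the intersection of a general anti-canonical member with the unique NEC recovers precisely the type-II extremal rational elliptic surface. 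Since the configuration of $(-2)$-curves is $E_6$ and the blow-down data is rigid, the one-to-one correspondence between isomorphism classes (together with the rigidity coming from trivial $\MW$) forces $Y^{\textup{(a)}}_3(E_6) \cong Y^{\textup{II}}_3(E_6)$.

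\textbf{The non-isomorphism $Y^{\textup{(a)}}_3(E_6) \not\cong Y^{\textup{VI}}_3(E_6)$.} Here I would invoke the same smoothness dichotomy used in the preceding lemma: a general anti-canonical member of $Y^{\textup{VI}}_3(E_6)$ is smooth (the type-VI elliptic surface has singular fibers $\textup{II}^*, \textup{I}_1$, so its generic fiber is a smooth elliptic curve, and this smoothness descends to a general member of $|-K|$ after the contractions), whereas a general anti-canonical member of $Y^{\textup{(a)}}_3(E_6)$ is singular because it arises from a \emph{quasi}-elliptic surface, all of whose fibers are cuspidal rational curves. I expect the authors to cite the relevant remark from \cite{KN} (in the style of \cite[Remark 5.2]{KN}) recording this smooth-versus-singular distinction for the general anti-canonical member; since smoothness of a general member of $|-K_Y|$ is an isomorphism invariant, the two surfaces cannot be isomorphic.

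\textbf{The main obstacle} will be the isomorphism half rather than the non-isomorphism half. Matching a quasi-elliptic surface against an elliptic one requires care, because the generic fibers are of fundamentally different nature, so the isomorphism cannot be read off at the level of the fibration itself; instead it must be recovered after contracting the section and the NEC, at the level of the degree-three surface (the cubic). The delicate point is verifying that the blow-up procedure genuinely reproduces the type-II configuration—one must check that the single NEC meets a general anti-canonical member in the correct scheme-theoretic way so that blowing up that intersection creates exactly the $\textup{II}^*$ fiber of the type-II elliptic surface, and that no extra moduli intervene (which is guaranteed here precisely because $|\MW| = 1$ for both types, so each configuration determines a unique surface).
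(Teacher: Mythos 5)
Your proof of the non-isomorphism half rests on a false premise, and that premise would, if true, equally contradict the isomorphism half of the lemma. You claim that a general anti-canonical member of $Y^{\textup{(a)}}_3(E_6)$ is singular ``because it arises from a quasi-elliptic surface, all of whose fibers are cuspidal.'' This inference is invalid: the quasi-elliptic fibration of $Y_0^{\textup{(a)}}$ only contributes a \emph{pencil} inside the three-dimensional system $|-K_{Y^{\textup{(a)}}_3(E_6)}|$, and a general member of the full system is not a fiber of that fibration. Indeed, the paper computes the system explicitly: its pushforward to $\PP^2_k$ is $\Lambda=\{ a(x^3+y^2z)+bz^3+cxz^2+dyz^2 \mid [a:b:c:d]\in\PP^3_k\}$, whose singular members form the proper closed locus $\{ad=0\}$, so a \emph{general} anti-canonical member of $Y^{\textup{(a)}}_3(E_6)$ is smooth. (This is precisely what makes the first half true: a general pencil in $\Lambda$ through the point $[0:1:0:0]$, which corresponds to $3\{z=0\}$, has smooth general member and exactly one singular member, so its elimination is an extremal rational elliptic surface with a single fiber of type $\textup{II}^*$, i.e.\ of type II.) Consequently the smooth-versus-singular dichotomy distinguishes none of the three surfaces; and if it did separate $Y^{\textup{(a)}}_3(E_6)$ from $Y^{\textup{VI}}_3(E_6)$, the identical argument would separate it from $Y^{\textup{II}}_3(E_6)$, contradicting the isomorphism you assert. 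The degree here matters: at degrees $1$ and $2$ the quasi-elliptic-origin surfaces do have only singular anti-canonical members (which is why \cite[Remark 5.2]{KN} works in the preceding lemma), but at degree $3$ this breaks down, and that is exactly the content of $Y^{\textup{(a)}}_3(E_6)\cong Y^{\textup{II}}_3(E_6)$.

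The invariant the paper actually uses is finer: since $[0:1:0:0]$ lies on both planes of the quadric $\{ad=0\}$, any pencil (line) through it either lies inside $\{ad=0\}$, so that all its members are singular, or meets $\{ad=0\}$ only at that point, so that it contains exactly one singular member. If $Y^{\textup{(a)}}_3(E_6)$ were isomorphic to $Y^{\textup{VI}}_3(E_6)$, the pencil induced by the type VI fibration (singular fibers $\textup{II}^*,\textup{I}_1$) would be a pencil through the $\textup{II}^*$-member containing exactly \emph{two} singular members, which is impossible. Your isomorphism half also contains a numerical slip: blowing up the single point where a general anti-canonical member meets the unique NEC yields a surface of degree $2$, not an elliptic surface; to recover the fibration one must eliminate the full base locus of a pencil of anti-canonical members (three points, possibly infinitely near), which is how the paper produces the type II surface.
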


\begin{proof}
Fix coordinates $[x: y: z]$ of $\PP^2_k$ and let $C \coloneqq \{x^3+y^2z=0\}$.
By \cite[Remark 5.2]{KN}, $Y^{\textup{(a)}}_3(E_6)$ is obtained by blowing up six points on $C$ infinitely near $t \coloneqq [0:1:0]$.
The anti-canonical linear system of $Y^{\textup{(a)}}_3(E_6)$ corresponds to the linear system $\Lambda$ of cubic curves intersecting with $C$ at $t$ with multiplicity at least six.
Then $\Lambda=\{ a(x^3+y^2z)+bz^3+cxz^2+dyz^2|[a:b:c:d] \in \PP^3_k \}$.
It is easy to check that $[0:1:0:0]$ corresponds to the member $3\{z=0\}$ and the locus of singular members of $\Lambda$ is $\{ad=0\}$.
In particular, a pencil in $\Lambda$ passing through $[0:1:0:0]$ either consists of singular members or contains exactly one singular member, which is $3\{z=0\}$.

On the other hand, we recall that the configuration of singular fibers of the extremal rational elliptic surface of type II (resp.\,VI) is $(\textup{II}^*)$ (resp.\,$(\textup{II}^*, \textup{I}_1)$), where we use Kodaira's notation.
Since an elimination of a rational map associated to a general pencil in $\Lambda$ passing through $[0:1:0:0]$ gives the extremal rational elliptic surface of type II, we conclude that $Y^{\textup{(a)}}_3(E_6) \cong Y^{\textup{II}}_3(E_6)$.
On the other hand, there is no pencil in $\Lambda$ passing through $[0:1:0:0]$ which contains exactly two singular members. Hence $Y^{\textup{(a)}}_3(E_6) \not \cong Y^{\textup{VI}}_3(E_6)$.
\end{proof}

\begin{lem}\label{lem:E_7}
It holds that $Y^{\textup{II}}_2(E_7) \not \cong Y^{\textup{VI}}_2(E_7)$
\end{lem}

\begin{proof}
If $Y^{\textup{II}}_2(E_7) \cong Y^{\textup{VI}}_2(E_7)$, then
we obtain $Y^{\textup{II}}_3(E_6) \cong Y^{\textup{VI}}_3(E_6)$, a contradiction with Lemma \ref{E_6}.
\end{proof}

Therefore there are four isomorphism classes of $X$ with $K_X^2=3$.

Next, we deal with the case where $K_X^2=4$.
Then $Y=Y_4$, which is the blow-down of an NEC in one of $Y_3$ listed in Table \ref{table:deg3}.
The pairs of $Y_3$ and $Y_4$ are listed as in Table \ref{table:deg4}, where $n$ is the number of the NECs on $Y_4$.

    \begin{table}[ht]
\caption{Isomorphism classes of the pairs $(Y_3, Y_4)$ in $p=2$}
  \begin{tabular}{|c|c|c|} \hline
       $Y_3$                &$Y_4$                   &$n$\\ \hline \hline
       $Y^{\textup{I}}_3(A_1+A_5)$     & $Y^{\textup{I}}_4(2A_1+A_3)$      &$0$ \\ \hline
       $Y^{\textup{VI}}_3(E_6)$      & $Y^{\textup{VI}}_4(D_5)$        &$1$ \\ \hline
       $Y^{\textup{(a)}}_3(E_6)$  & $Y^{\textup{(a)}}_4(D_5)$          &$1$\\ \hline
  \end{tabular}
  \label{table:deg4}
\end{table}

\begin{lem}\label{D_5}
It holds that $Y^{\textup{(a)}}_4(D_5) \not \cong Y^{\textup{VI}}_4(D_5)$.
\end{lem}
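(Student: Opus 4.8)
The plan is to distinguish the two surfaces by whether a general anti-canonical member is smooth, just as in the two preceding lemmas. An isomorphism of surfaces identifies the two anti-canonical linear systems, hence preserves the smoothness of a general member of $|-K|$; so it suffices to prove that a general member of $|-K_{Y^{\textup{VI}}_4(D_5)}|$ is smooth while no member of $|-K_{Y^{\textup{(a)}}_4(D_5)}|$ is.

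For the quasi-elliptic side, let $g \colon Y_0 \to Y^{\textup{(a)}}_4(D_5)$ be the composite of the blow-downs from the rational quasi-elliptic surface $Y_0$ of type (a). Then $-K_{Y^{\textup{(a)}}_4(D_5)} = g_*(-K_{Y_0})$ and $-K_{Y_0}=F$ is the class of a fiber of the quasi-elliptic fibration. Every such fiber is a cuspidal rational curve, so for general $F$ its birational image $g(F)$ has geometric genus $0$, while its arithmetic genus is one (Theorem \ref{basic} (1)); hence $g(F)$ is singular. This already shows that the pencil of fiber-images consists of singular curves, and I would upgrade it to the statement that \emph{every} anti-canonical member is singular by invoking the explicit description of type (a) via the cuspidal cubic $\{x^3+y^2z=0\}$, exactly the input of \cite[Remark 5.2]{KN} used in Lemma \ref{E_6}.

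For $Y^{\textup{VI}}_4(D_5)$ the same computation, applied to the type-VI elliptic surface $Y_0$, shows that the image of a general (smooth elliptic) fiber is a smooth anti-canonical member of arithmetic genus one. Since smoothness is an open condition on $|-K_{Y^{\textup{VI}}_4(D_5)}|$, a general member is smooth. Comparing the two conclusions rules out any isomorphism $Y^{\textup{(a)}}_4(D_5) \cong Y^{\textup{VI}}_4(D_5)$.

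The main obstacle is the quasi-elliptic side. The full anti-canonical system on $Y^{\textup{(a)}}_4(D_5)$ is four-dimensional, strictly larger than the pencil of fiber-images, so the geometric-genus argument by itself only controls that pencil; one genuinely needs that \emph{every} member of $|-K_{Y^{\textup{(a)}}_4(D_5)}|$ is singular. This is exactly where I would rely on the cuspidal-cubic model of type (a) (\cite[Remark 5.2]{KN}) rather than an abstract argument, mirroring how the preceding lemmas invoked \cite[Remark 5.2]{KN} and \cite[Remark 5.5]{KN}.
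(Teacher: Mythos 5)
Your strategy hinges on the claim that \emph{no} member of $|-K_{Y^{\textup{(a)}}_4(D_5)}|$ is smooth, and that claim is false. The anti-canonical system of $Y^{\textup{(a)}}_4(D_5)$ is the four-dimensional system $\Lambda'=\{a(x^3+y^2z)+bz^3+cxz^2+dyz^2+ex^2z\}$ of cubics, and it contains smooth members: for $a=d=1$, $b=c=e=0$ the cubic $x^3+y^2z+yz^2$ has partial derivatives $x^2$, $z^2$, $y^2$ in characteristic two, which have no common zero in $\PP^2_k$, so it is smooth, and its strict transform is a smooth anti-canonical member. Indeed, the paper's own proof computes that the singular members of $\Lambda'$ form exactly the divisor $\{ad=0\}$, so the \emph{general} member is smooth. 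The geometric-genus argument only controls the quasi-elliptic pencil $\langle x^3+y^2z,\,z^3\rangle$, which is a single line inside $\Lambda'$, and the ``upgrade'' you hope to extract from \cite[Remark 5.2]{KN} is not available: that remark concerns the degree-two surface $Y^{\textup{(a)}}_2(E_7)$, and the phenomenon does not survive further blow-downs. This is already visible in the paper: Lemma \ref{E_6} proves $Y^{\textup{(a)}}_3(E_6)\cong Y^{\textup{II}}_3(E_6)$, a blow-down of an \emph{elliptic} surface, so in degree three the type (a) surface has smooth anti-canonical members. Consequently ``a general anti-canonical member is smooth'' holds for both degree-four surfaces and cannot distinguish them; the invariant that worked one degree lower genuinely dies here.

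The paper therefore uses a finer invariant, namely how the singular members sit inside $\Lambda'$ relative to the distinguished member $3\{z=0\}$ (the point $[0:1:0:0:0]$, which is the image of the $\textup{II}^*$ fibre and the unique member supporting the $D_5$-configuration). An isomorphism $Y^{\textup{(a)}}_4(D_5)\cong Y^{\textup{VI}}_4(D_5)$ would yield a pencil in $\Lambda'$ through $[0:1:0:0:0]$ whose elimination is the type VI extremal rational elliptic surface; since that surface has exactly two singular fibres ($\textup{II}^*$ and $\textup{I}_1$), this pencil would contain exactly two singular members. But the singular locus $\{ad=0\}$ is a union of two hyperplanes both passing through $[0:1:0:0:0]$, so any pencil through that point either consists entirely of singular members or contains exactly one. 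That contradiction, a statement about the position of the singular locus rather than its mere nonemptiness, is what the proof of Lemma \ref{D_5} actually requires, and your proposal does not supply it.
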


\begin{proof}
We follow the notation of the proof of Lemma \ref{E_6}.
Then $Y^{\textup{(a)}}_4(D_5)$ is obtained by blowing up five points on $C \subset \PP^2_k$ infinitely near $t$ and the anti-canonical linear system of $Y^{\textup{(a)}}_4(D_5)$ corresponds to the linear system $\Lambda'=\{ a(x^3+y^2z)+bz^3+cxz^2+dyz^2+ex^2z|[a:b:c:d:e] \in \PP^4_k \}$.
It is easy to check that $[0:1:0:0:0]$ corresponds to the member $3\{z=0\}$ and the locus of singular members of $\Lambda'$ is $\{ad=0\}$.
In particular, there is no pencil in $\Lambda'$ passing through $[0:1:0:0:0]$ which contains exactly two singular members.
Hence $Y^{\textup{(a)}}_4(D_5) \not \cong Y^{\textup{VI}}_4(D_5)$.
\end{proof}

Therefore there are three isomorphism classes of $X$ with $K_X^2=4$.

Next, we deal with the case where $K_X^2=5$.
Then $Y=Y_5$, which is the blow-down of an NEC in one of $Y_4$ listed in Table \ref{table:deg4}.
The pairs of $Y_4$ and $Y_5$ are listed as in Table \ref{table:deg4}, where $n$ is the number of the NECs on $Y_5$.

    \begin{table}[ht]
\caption{Isomorphism classes of the pairs $(Y_4, Y_5)$ in $p=2$}
  \begin{tabular}{|c|c|c|} \hline
       $Y_4$                &$Y_5$                   &$n$\\ \hline \hline
       $Y^{\textup{VI}}_4(D_5)$      & $Y^{\textup{VI}}_5(A_4)$        &$1$ \\ \hline
       $Y^{\textup{(a)}}_4(D_5)$  & $Y^{\textup{(a)}}_5(A_4)$          &$1$\\ \hline
  \end{tabular}
  \label{table:deg5}
\end{table}


\begin{lem}\label{A_4}
It holds that $Y^{\textup{(a)}}_5(A_4) \cong Y^{\textup{VI}}_5(A_4)$.
\end{lem}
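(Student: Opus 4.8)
The plan is to follow the strategy of Lemmas \ref{E_6} and \ref{D_5}: realize $Y^{\textup{(a)}}_5(A_4)$ explicitly as a blow-up of $\PP^2_k$ and then detect a type VI elliptic fibration on it through its anti-canonical linear system. Fix coordinates $[x:y:z]$ of $\PP^2_k$ and set $C \coloneqq \{x^3+y^2z=0\}$ and $t \coloneqq [0:1:0]$, as in the proof of Lemma \ref{E_6}. First I would show, by the same reasoning used there (via \cite[Remark 5.2]{KN}), that $Y^{\textup{(a)}}_5(A_4)$ is the blow-up of $\PP^2_k$ at the four points on $C$ infinitely near $t$, so that its anti-canonical linear system corresponds to
\[
\Lambda'' = \{a(x^3+y^2z) + bz^3 + cxz^2 + dyz^2 + ex^2z + gxyz \mid [a:b:c:d:e:g] \in \PP^5_k\},
\]
the system of cubics meeting $C$ at $t$ with multiplicity at least four. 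As in the lower-degree cases, the point $P_0 \coloneqq [0:1:0:0:0:0]$ corresponds to the triple line $3\{z=0\}$, and any pencil in $\Lambda''$ whose elimination produces a fiber of type $\textup{II}^*$ must pass through $P_0$.

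The heart of the argument is to exhibit a pencil $\ell \subset \Lambda''$ through $P_0$ whose elimination yields the extremal rational elliptic surface of type VI, that is, a pencil with smooth general member and exactly two singular members: the triple line $3\{z=0\}$ (resolving to $\textup{II}^*$) and an irreducible nodal cubic (giving $\textup{I}_1$). To locate such a pencil I would first determine the locus $\Delta \subset \PP^5_k$ of singular members by solving $F_x = F_y = F_z = 0$ in characteristic two for $F \in \Lambda''$, where
\[
F_x = ax^2 + cz^2 + gyz, \quad F_y = z(dz + gx), \quad F_z = ay^2 + bz^2 + ex^2 + gxy.
\]
From $F_y=0$ one immediately sees that $\{a=0\}$ (the reducible members containing $\{z=0\}$) is one component of $\Delta$, together with a second component $\Delta'$ coming from the branch $dz+gx=0$, $z \neq 0$, which parametrizes the irreducible singular cubics. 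The decisive contrast with Lemmas \ref{E_6} and \ref{D_5} is that there $\Delta = \{ad=0\}$ was singular at the analogue of $P_0$, forcing every pencil through that point to have $3\{z=0\}$ as its only singular member; here, since $P_0$ lies only on $\{a=0\}$, a general line $\ell$ through $P_0$ meets $\{a=0\}$ only at $P_0$, while meeting $\Delta'$ in additional points. I would choose $\ell$ so that it meets $\Delta'$ in exactly one further point, and check that the corresponding member is a nodal cubic.

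Having produced such a pencil, I would verify that eliminating its base points — the four infinitely near points of $C$ at $t$ together with the remaining base points of $\ell$ — turns $3\{z=0\}$ into the full $\textup{II}^*$ configuration and the nodal member into a fiber of type $\textup{I}_1$, so that the result is a rational elliptic surface of type VI. Since by Theorem \ref{extell} the extremal rational elliptic surface of type VI is unique and has trivial Mordell--Weil group, $Y^{\textup{VI}}_5(A_4)$ is uniquely determined, and the existence of such a fibration on $Y^{\textup{(a)}}_5(A_4)$ forces $Y^{\textup{(a)}}_5(A_4) \cong Y^{\textup{VI}}_5(A_4)$. I expect the main obstacle to be the fiber bookkeeping in this last step: one must confirm that $\ell$ has \emph{exactly} the two singular members $\textup{II}^*$ and $\textup{I}_1$ and no further degenerate fibers, and that the base-point resolution yields the full $\textup{II}^*$ rather than a smaller configuration. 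Equivalently, the delicate point is the discriminant analysis showing that, once the degree is five, a pencil through $P_0$ meeting $\Delta'$ in a single nodal member genuinely exists — precisely the phenomenon that fails in the degree three and four cases.
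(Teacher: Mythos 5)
Your setup coincides with the paper's: realize $Y^{\textup{(a)}}_5(A_4)$ as the blow-up of the four points on $C=\{x^3+y^2z=0\}$ infinitely near $t$, identify the anti-canonical system with $\Lambda''$, and look for a pencil through $P_0$ (the point corresponding to $3\{z=0\}$) whose elimination is the type VI surface. The gap is that the step which actually proves the lemma -- producing such a pencil -- is never carried out, and the geometric claim you base its existence on is false. You assert that $P_0$ lies only on the component $\{a=0\}$ and not on the component $\Delta'$ coming from the branch $dz+gx=0$, $z\neq 0$. In fact the closure of $\Delta'$ contains $P_0$: the locus $\{d=g=0,\ a\neq 0\}$ consists entirely of singular (cuspidal) members -- exactly the phenomenon responsible for the discriminant $\{ad=0\}$ in Lemmas \ref{E_6} and \ref{D_5} -- and it degenerates to $3\{z=0\}$. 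Concretely, eliminating the singular point along the branch $x=(d/g)z$, $y=(ad^2/g^3+c/g)z$ gives $\Delta'=\{a^3d^4+ac^2g^4+bg^6+ed^2g^4+ad^3g^3+cdg^5=0\}$, a degree-seven hypersurface through $P_0$; restricting to a general line through $P_0$ yields $s^6\cdot(\text{linear form})$, i.e.\ $P_0$ absorbs the intersection with multiplicity six and exactly one residual point remains. So the contrast with the $E_6$ and $D_5$ cases is not that $P_0$ moves off the discriminant, but that the residual intersection is nonempty and (for suitable lines) nodal rather than cuspidal -- and this last point, which you yourself flag as ``the delicate point,'' is precisely what your proposal leaves unproven.

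The paper closes this gap in one line by exhibiting the pencil explicitly: $\langle z^3,\ x^3+y^2z+xyz\rangle\subset\Lambda''$, the line through $P_0$ and $[1:0:0:0:0:1]$. The member $x^3+y^2z+xyz$ is a nodal cubic (its unique singular point is $[0:0:1]$, where the quadratic part $y^2+xy$ factors into distinct linear forms), and a direct check with the partials you wrote down shows these are the only two singular members; hence the elimination has fibers $\textup{II}^*$ and $\textup{I}_1$ and is the type VI surface, giving $Y^{\textup{(a)}}_5(A_4)\cong Y^{\textup{VI}}_5(A_4)$. If you prefer to keep your discriminant route, note that nodality need not be checked separately: any rational elliptic surface with a $\textup{II}^*$ fiber is extremal by Shioda--Tate, so by Table \ref{extell2} a pencil through $P_0$ with smooth general member and at least one further singular member must eliminate to type VI. But some explicit pencil (or an honest argument that the residual member of a general line is not forced to be reducible or to merge back into $P_0$) still has to be produced; as written, the existence is asserted rather than proved.
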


\begin{proof}
We follow the notation of the proof of Lemma \ref{E_6}.
Then $Y^{\textup{(a)}}_5(A_4)$ is obtained by blowing up four points on $C \subset \PP^2_k$ infinitely near $t$ and the anti-canonical linear system of $Y^{\textup{(a)}}_5(A_4)$ corresponds to the linear system $\Lambda''=\{ a(x^3+y^2z)+bz^3+cxz^2+dyz^2+ex^2z+fxyz|[a:b:c:d:e:f] \in \PP^5_k \}$.
Since $\{x^3+y^2z+xyz=0\}$ is a nodal cubic, an elimination of a rational map associated to the pencil $\langle z^3, x^3+y^2z+xyz\rangle \subset \Lambda''$ gives the extremal rational elliptic surface of type VI.
Hence $Y^{\textup{(a)}}_5(A_4) \cong Y^{\textup{VI}}_5(A_4)$.
\end{proof}

Finally, we deal with the case where $K_X^2=5$.
Then $Y=Y_6$, which is the blow-down of an NEC in one of $Y_5$ listed in Table \ref{table:deg5}.
The pairs of $Y_5$ and $Y_6$ are listed as in Table \ref{table:deg4}, where $n$ is the number of the NECs on $Y_6$.

    \begin{table}[ht]
\caption{Isomorphism classes of the pairs $(Y_5, Y_6)$ in $p=2$}
  \begin{tabular}{|c|c|c|} \hline
       $Y_5$                &$Y_6$                   &$n$\\ \hline \hline
       $Y^{\textup{(a)}}_5(A_4)$  & $Y^{\textup{(a)}}_6(A_1+A_2)$          &$0$\\ \hline
  \end{tabular}
  \label{table:deg6}
\end{table}


Now, we calculate the defining equations of some Du Val del Pezzo surfaces, some of which we will use in the proof of Theorems \ref{isom} and \ref{F split Intro}. 


\begin{prop}\label{defeq7A_1}
A Du Val del Pezzo surface $X(7A_1)$ is determined unique up to isomorphisms, and there are coordinates $[x: y: z: w]$ of $\PP_k(1,1,1,2)$ such that the defining equation of the Du Val del Pezzo surface $X(7A_1)$ is 
\begin{align}\label{eq:7A1}
w^2+xyz(x+y+z)=0.    
\end{align}
\end{prop}

\begin{proof}
Take $Y$ as the minimal resolution of $X$.
Let $[s:t:u]$ be coordinates of $\PP^2_k$.
Then there is the blow-down $h \colon Y \to \PP^2_k$ such that $h(E_h) \subset \PP^2_k$ is the set of closed points defined over $\FF_2$ by \cite[Proposition 5.9]{KN}. 
In particular, $X(7A_1)$ is unique up to isomorphisms.
Set 
\begin{align}\label{eq:7A_1coord}
\begin{split}
&x \coloneqq st(s+t), y \coloneqq tu(t+u), z \coloneqq us(u+s), \text{ and }\\
&w \coloneqq stu(s+t)(t+u)(u+s).
\end{split}
\end{align}
Then $x,y,z \in H^0(\PP^2_k, h_*\sO_Y(-K_Y))$ and $w \in H^0(\PP^2_k, h_*\sO_Y(-2K_Y))$
because $H^0(\PP^2_k, h_*\sO_Y(-nK_Y)) \subset H^0(\PP^2_k, \sO_{\PP^2_k}(3n))$ consists of elements which have zero of order at least $n$ at each points in $h(E_h)$ for $n \geq 1$.
Moreover, it is easy to check that $\{x^2, y^2, z^2, xy, yz, zx, w\}$ is a basis of $H^0(\PP^2_k, h_*\sO_Y(-2K_Y))$.
Hence $X$ is the closure of the image of the rational map
\begin{align*}
    \Phi \colon \PP^2_k &\dashrightarrow \PP_k(1,1,1,2)\\ 
    [s:t:u] & \mapsto [x:y:z:w].
\end{align*}
Substituting (\ref{eq:7A_1coord}) into the left-hand side of (\ref{eq:7A1}) yields zero as a polynomial of $s,t$, and $u$.
Therefore, (\ref{eq:7A1}) is a defining equation of $X(7A_1)$. 
\end{proof}


\begin{prop}\label{defeqNBdeg1char2}
Let $X$ be a Du Val del Pezzo surface of degree one of Dynkin type $4A_1+D_4$ or $8A_1$.
Set $\mathcal{D}_n \subset \PP^n_k$ as the complement of the union of all the hyperplane sections defined over $\FF_2$ for $n=1, 2$. 
Then we can choose coordinates of $\PP_k(1,1,2,3)$ such that the defining equation of $X$ in $\PP_k(1,1,2,3)$ is 
\begin{align}
w^2+z^3+abx^2z^2+y^4z+(a^2+ab+b^2)x^2y^2z+ab(a+b)x^3yz=0 
\end{align} 
for some $[a:b] \in \mathcal{D}_1$ when $\Dyn(X)=4A_1+D_4$, and 
\begin{small}
\begin{align}\label{eq:8A1}
\begin{split}
&w^2+abcz^3+((ab+bc+ca)^2+abc(a+b+c))y^2z^2\\
&+(a+b+c)(a+b)(b+c)(c+a)xyz^2\\
&+(ab+bc+ca)^2x^2z^2
+(a+b+c)^2(a+b)(b+c)(c+a)xy^3z\\
&+(a+b+c)^2((a+b+c)^3+abc)x^2y^2z
+(a+b+c)^2(a+b)(b+c)(c+a)x^3yz\\
&+(a+b+c)^2abcx^4z
+(a+b)^2(b+c)^2(c+a)^2y^6
+((a+b+c)^3+abc)^2x^2y^4\\
&+(a+b)^2(b+c)^2(c+a)^2x^4y^2  
+a^2b^2c^2x^6=0
\end{split}
\end{align}
\end{small}
for some $[a:b:c] \in \mathcal{D}_2$ when $\Dyn(X)=8A_1$. 
\end{prop}

\begin{proof}
We give the proof only for the case $\Dyn(X) = 8A_1$; the other cases are left to the reader.
Let $Y$ be the minimal resolution of $X$.
Fix coordinates $[s: t: u]$ of $\PP^2_k$.
Then there is $t=[a:b:c] \in \mathcal{D}_2$ and the blow-down $h \colon Y \to \PP^2_k$ such that $t \in h(E_h)$ and $h(E_h) \setminus \{t\} \subset \PP^2_k$ is the set of closed points defined over $\FF_2$ by \cite[Proposition 5.21]{KN}.
Set 
\begin{align}\label{eq:8A_1coord}
\begin{split}
x \coloneqq& c(a+b)st(s+t)+a(b+c)tu(t+u)+b(c+a)us(u+s),\\
y \coloneqq& c^2st(s+t)+a^2tu(t+u)+b^2us(u+s), \\
z \coloneqq& ((b+c)s+(c+a)t+(a+b)u)^2 stu(s+t+u),\text{ and }\\
w \coloneqq& ((b+c)s+a(t+u))((c+a)t+b(u+s))((a+b)u+c(s+t))\\
&\times stu(s+t)(t+u)(u+s).
\end{split}
\end{align}
Then $x, y \in H^0(\PP^2_k, h_*\sO_Y(-K_Y))$, $z \in H^0(\PP^2_k, h_*\sO_Y(-2K_Y))$ and $w \in H^0(\PP^2_k, h_*\sO_Y(-3K_Y))$ 
because $H^0(\PP^2_k, h_*\sO_Y(-nK_Y)) \subset H^0(\PP^2_k, \sO_{\PP^2_k}(3n))$ consists of functions which has zero of order at least $n$ at each points in $h(E_h)$ for $n \geq 1$.
Moreover, it is easy to check that $\{x^3, x^2y, xy^2, y^3, xz, yz, w\}$ is a basis of $H^0(\PP^2_k, h_*\sO_Y(-3K_Y))$.
Hence $X$ is the closure of the image of the rational map
\begin{align*}
    \Phi \colon \PP^2_k &\dashrightarrow \PP_k(1,1,2,3)\\ 
    [s:t:u] & \mapsto [x:y:z:w].
\end{align*}
Substituting (\ref{eq:8A_1coord}) into the left-hand side of (\ref{eq:8A1}) yields zero as a polynomial of $s,t$, and $u$.
Therefore, (\ref{eq:8A1}) is a defining equation of $X(8A_1)$. 
\end{proof}

\begin{prop}\label{Artin2}
Let $X$ be a Du Val del Pezzo surface of $\rho(X)=1$ and $p=2$.
If $\Dyn (X) \neq D_8, 2D_4, 4A_1+D_4$, or $8A_1$, then its isomorphism class is uniquely determined by its Dynkin type with Artin coindices.
\end{prop}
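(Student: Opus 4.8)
The plan is to reduce the statement to a finite verification, proceeding by cases on the Dynkin type and degree $d=K_X^2$, exactly as the classification in Section~\ref{sec:rho1} was organized. The key observation is that Proposition~\ref{Artin2} only has content when the Dynkin type admits more than one Du Val singularity with distinct Artin coindices; by Definition~\ref{GdelPezzo}, in characteristic two this happens precisely for types containing a $D_n$ or $E_n$ summand. For all Dynkin types appearing in Table~\ref{Pic1} that are built only out of $A_n$-singularities, the Artin coindex carries no extra information, so the uniqueness up to isomorphism by Dynkin type—already established in the arguments of this subsection—immediately gives uniqueness by $\Dyn'(X)$ as well. Thus the first step is to isolate the cases where a $D_n$ or $E_n$ appears and which are not among the excluded types $D_8$, $2D_4$, $4A_1+D_4$, $8A_1$.

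\textbf{Handling the multiplicity-free types.} Inspecting Table~\ref{Pic1} in characteristic two, the relevant remaining types with a $D$- or $E$-summand and a unique isomorphism class are $E_8$ (three classes), $A_1+E_7$, $A_2+E_6$, $A_3+D_5$, $2A_1+D_6$, $3A_1+D_4$, $E_7$, $A_1+D_6$, $E_6$, and $D_5$, together with several of multiplicity one. For each of these, the count of isomorphism classes recorded in the table was obtained in the preceding lemmas (for instance the $E_7$-case via Corollary~\ref{cor:E_7}, the $E_6$-case via Lemma~\ref{E_6}, the $D_5$-case via Lemma~\ref{D_5}). The step here is to verify that the number of distinct Artin coindices available for each such type matches the number of isomorphism classes already found, so that the correspondence $X \mapsto \Dyn'(X)$ is a bijection onto the realized coindices. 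Concretely, I would read off Artin's list \cite{Art} of the $D_n^r$ and $E_n^r$ singularities in $p=2$ and check that, e.g., the two isomorphism classes of $E_6$ realize the two distinct coindices $E_6^0$ and $E_6^1$, the two $D_5$-surfaces realize $D_5^0$ and $D_5^1$, and so forth.

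\textbf{The substantive computation} is for the types where several isomorphism classes of the \emph{surface} share one Dynkin type and we must certify that they are distinguished by the coindices of their singularities—this is where the $E_7$, $A_1+D_6$, and $E_6$ cases carry real content, since there are two surfaces of type $E_7$ and of type $A_1+D_6$ and two of type $E_6$. The concrete task is to compute, for each such surface $Y^m_n(l)$, the Artin coindex of the corresponding Du Val singularity on $X$, by examining the local structure of the contracted $(-2)$-configuration on the minimal resolution $Y$ constructed from the extremal elliptic or quasi-elliptic surface $Y_0$. I would use the explicit weighted-hypersurface or cubic equations supplied by Lemma~\ref{basic} together with the constructions in Subsection~\ref{DVdP}, contract the $(-2)$-curves, and read off the coindex from the local equation of the singularity.

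\textbf{The main obstacle} I anticipate is precisely this last coindex computation: distinguishing, say, the two $E_7$-surfaces not by an abstract invariant of $X$ but by a local invariant of a single singular point requires a careful local analysis of the resolution that is sensitive to the wild ramification phenomena special to $p=2$. The global arguments already used (comparing $j$-invariants of anti-canonical members, counting singular members in pencils, tracking $\MW$-orbits) distinguish the surfaces as abstract varieties, but translating that into the statement ``the coindices differ'' demands a local normal-form computation at the singular point, for which Artin's classification provides the target normal forms. Once the coindex of each surface is pinned down and matched against the available $D_n^r$, $E_n^r$ in $p=2$, the bijection—and hence the proposition—follows, completing the proof of Theorem~\ref{isom}(2) in characteristic two.
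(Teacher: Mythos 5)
Your overall route is essentially the paper's: use Theorem \ref{isom}(1) to discard all types with a unique isomorphism class, so that only $D_5$, $E_6$, $E_7$, $A_1+D_6$, $E_8$, $A_1+E_7$, $A_2+E_6$ remain, and then pin down $\Dyn'(X)$ for each of the finitely many surfaces by explicit equations and check that distinct surfaces get distinct coindices. This is exactly what the paper does: it computes a blow-down $Y\to\PP^2_k$, a basis of $h_*|-nK_Y|$, and the resulting (weighted) hypersurface equation for each surface, and reads off the coindex (Table \ref{table:Artin2}); for the degree-one types $A_1+E_7$ and $A_2+E_6$ it takes a shortcut you don't mention, blowing down $Y\to Y'$ to the minimal resolution of an $E_7$- or $E_6$-surface of higher degree so that the coindex is inherited from the already-computed cases (Table \ref{Artin2-1'}), rather than computing degree-one sextic equations directly.

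One assertion in your middle step is, however, simply false and should be removed: the number of Artin coindices available for a given Dynkin diagram in $p=2$ does \emph{not} match the number of isomorphism classes of surfaces, so no counting argument can yield the claimed ``bijection.'' By Artin's list there are four classes $E_7^0,\dots,E_7^3$ and five classes $E_8^0,\dots,E_8^4$, yet only three surfaces of each type, and the paper's computation shows they realize $E_7^0, E_7^2, E_7^3$ and $E_8^0, E_8^3, E_8^4$ (similarly the two $A_1+D_6$ surfaces realize $D_6^0$ and $D_6^2$, skipping $D_6^1$). So some coindices are not realized at all by Picard rank one del Pezzo surfaces, and which ones occur is not predictable without the computation. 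Injectivity of $X\mapsto \Dyn'(X)$ therefore rests entirely on the explicit local normal-form computations you describe in your ``substantive computation'' paragraph -- which is the correct and, in the paper, the actual content of the proof -- and not on any numerical coincidence between class counts and coindex counts.
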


\begin{proof}
Let $Y$ be the minimal resolution of $X$.
By Theorem \ref{isom} (1), we may assume that $\Dyn(X)=D_5, E_6, E_7, A_1+D_6, E_8, A_1+E_7$, or $A_2+E_6$.

When $\Dyn(X) \neq A_1+E_7$ or $A_2+E_6$ in addition, we calculate the defining equation of $X$ as in the proof of Propositions \ref{defeq7A_1} and \ref{defeqNBdeg1char2};
firstly, we choose a suitable blow-down $h \colon Y \to \PP^2_k$ and calculate a basis of $\Lambda \coloneqq h_*|-nK_Y|$ with $n=1$ (resp.~$=2$, $=3$) when $K_Y^2 \geq 3$ (resp.~$=2$, $=1$).
Then $X$ is the closure of the image of the map from $\PP^2_k$ to $\PP \coloneqq \PP^{d}_k$ with $d=K_Y^2$ (resp.~$\PP_k(1,1,1,2)$, $\PP_k(1,1,2,3)$) defined by $\Lambda$ when $K_Y^2 \geq 3$ (resp.~$=2$, $=1$).
Finally we compute the defining equation of $X$ in $\PP$ to determine $\Dyn'(X)$.
As a result, we get the defining equation of $X$ as in Table \ref{table:Artin2}, where $[x_0:\cdots:x_4]$ stands for coordinates of $\PP^4_k$ and $[x:y:z:w]$ stands for coordinates of $\PP^3_k, \PP_k(1,1,1,2)$, or $\PP_k(1,1,2,3)$.


    \begin{table}[ht]
\caption{Artin coindices and defining equations of Du Val del Pezzo surfaces of Dynkin type $D_5$, $E_6$, $E_7$, $A_1+D_6$, or $E_8$ in $p=2$}
  \begin{tabular}{|c|c|l|c|} \hline
         $K_Y^2$                   &  $Y$ &\multicolumn{1}{l|}{defining equation of $X \subset \PP$}                 & $\Dyn'(X)$\\ \hline \hline
    \multirow{2}{*}{$4$}                        &  $Y^{\textup{(a)}}_4(D_5)$ &$x_2^2+x_1x_4, x_0x_1+x_2x_4+x_3^2$       & $D_5^0$\\ \cline{2-4}
                            &  $Y^{\textup{VI}}_4(D_5)$ &$x_2^2+x_1x_4, x_0x_1+x_2x_4+x_3^2+x_2x_3$& $D_5^1$\\ \hline \hline
    \multirow{2}{*}{$3$}                         &  $Y^{\textup{(a)}}_3(E_6)$& $wz^2+x^3+y^2z       $& $E_6^0$\\ \cline{2-4}
                            & $Y^{\textup{VI}}_3(E_6)$ & $wz^2+x^3+y^2z+xyz   $& $E_6^1$\\ \hline \hline
    \multirow{5}{*}{$2$}                         & $Y^{\textup{(a)}}_2(E_7)$   & $w^2+yz^3+xy^3       $& $E_7^0$\\ \cline{2-4}
                            & $Y^{\textup{II}}_2(E_7)$    & $w^2+yz^3+xy^3+y^2w   $& $E_7^2$\\ \cline{2-4}
                            & $Y^{\textup{VI}}_2(E_7)$   & $w^2+yz^3+xy^3+yzw   $& $E_7^3$\\ \cline{2-4}
                            & $Y^{\textup{(b)}}_2(A_1+D_6)$  & $w^2+xyz^2+y^3z$& $A_1+D_6^0$\\ \cline{2-4}
                            & $Y^{\textup{I}}_2(A_1+D_6)$    & $w^2+xyz^2+y^3z+yzw$ & $A_1+D_6^2$\\ \hline \hline
    \multirow{3}{*}{$1$}                         & $Y^{\textup{(a)}}_1(E_8)$                 & $w^2+z^3+xy^5       $& $E_8^0$\\ \cline{2-4}
                            & $Y^{\textup{II}}_1(E_8)$                  & $w^2+z^3+xy^5+y^3w  $& $E_8^3$\\ \cline{2-4}
                            & $Y^{\textup{VI}}_1(E_8)$                  & $w^2+z^3+xy^5+yzw   $& $E_8^4$\\ \hline
  \end{tabular}
\label{table:Artin2}
\end{table}

Finally, suppose that $\Dyn(X)=A_1+E_7$ or $A_2+E_6$.
Then a suitable choice of blow-down $Y \to Y'$ gives the minimal resolution $Y'$ of a del Pezzo surface of Picard rank one of type $E_7$ or $E_6$ as in Table \ref{Artin2-1'}. 

    \begin{table}[ht]
\caption{Artin coindices of Du Val del Pezzo surfaces of Dynkin type $A_1+E_7$ or $A_2+E_6$ in $p=2$}
  \begin{tabular}{|c|c|c|c|} \hline
         $Y$ &$Y'$& $\Dyn'(X)$ \\ \hline \hline
         $Y^{\textup{(c)}}_1(A_1+E_7)$& $Y^{\textup{(a)}}_2(E_7)$& $A_1+E_7^0$\\ \hline
         $Y^{\textup{V}}_1(A_1+E_7)$  & $Y^{\textup{VI}}_2(E_7)$& $A_1+E_7^3$\\ \hline
         $Y^{\textup{VII}}_1(A_2+E_6)$& $Y^{\textup{(a)}}_3(E_6)$& $A_2+E_6^0$\\ \hline
         $Y^{\textup{XI}}_1(A_2+E_6)$ & $Y^{\textup{VI}}_3(E_6)$& $A_2+E_6^1$\\ \hline
  \end{tabular}
  \label{Artin2-1'}
\end{table}

Combining these results, we get the assertion.
\end{proof}


\subsection{Characteristic three}
In this subsection, we treat the case where $p=3$.
Let $X$ be a singular Du Val del Pezzo surface of Picard rank one.
As in the case where $p=2$, we may assume that $K_X^2 \leq 6$.
We follow the notation of Lemma \ref{reduction}.

We start with the case where $K_X^2=1$.
The pairs of $Y_0$ and $Y=Y_1$ are listed as in Table \ref{table:deg1'}, where $n$ is the number of the NECs on $Y_1$.
    \begin{table}[ht]
\caption{Isomorphism classes of the pairs $(Y_0, Y_1)$ in $p=3$}
  \begin{tabular}{|c|c|c||c|c|c|} \hline
       Type of $Y_0$       &$Y_1$       &$n$& Type of $Y_0$      &$Y_1$         &$n$\\ \hline \hline
       I    &$Y^{\textup{I}}_1(E_8)$           &$1$& X     &$Y^{\textup{X}}_1(D_8)$           &$2$\\ \hline
       II   &$Y^{\textup{II}}_1(A_8)$          &$2$& XI    &$Y^{\textup{XI}}_1(A_1+E_7)$      &$1$\\ \hline
       III  &$Y^{\textup{III}}_1(A_2+E_6)$     &$1$& SI    &$Y^{\textup{SI}}_1(A_1+A_7)$      &$1$\\ \hline
       IV   &$Y^{\textup{IV}}_1(E_8)$          &$1$& SII   &$Y^{\textup{SII}}_1(2A_4)$        &$0$\\ \hline
       V    &$Y^{\textup{V}}_1(A_1+E_7)$       &$1$& SIII  &$Y^{\textup{SIII}}_1(2A_1+2A_3)$  &$0$\\ \hline
       VI   &$Y^{\textup{VI}}_1(2D_4)$         &$2$& (1)   &$Y^{\textup{(1)}}_1(E_8)$         &$1$\\ \hline
       VII  &$Y^{\textup{VII}}_1(A_1+A_2+A_5)$ &$0$& (2)   &$Y^{\textup{(2)}}_1(A_2+E_6)$     &$1$\\ \hline
       VIII &$Y^{\textup{VIII}}_1(A_3+D_5)$    &$1$& (3)   &$Y^{\textup{(3)}}_1(4A_2)$        &$0$\\ \hline
       IX   &$Y^{\textup{IX}}_1(2A_1+D_6)$     &$1$& & &\\ \hline
  \end{tabular}
  \label{table:deg1'}
\end{table}

By virtue of the $\MW(Y_0)$-action, there is one to one correspondence between the isomorphism classes of $Y_0$ and those of $Y_1$.
Theorems \ref{extell} and \ref{q-ell} now show the assertion (1) of Theorem \ref{isom} in the case where $K_X^2=1$ and $p=3$.

Next, we consider the case where $K_X^2=2$.
Then $Y=Y_2$, which is the blow-down of an NEC in one of $Y_1$ listed in Table \ref{table:deg1'}.
The pairs of $Y_1$ and $Y_2$ are listed as in Table \ref{table:deg2'}, where $n$ is the number of the NECs on $Y_2$. 

    \begin{table}[ht]
\caption{Isomorphism classes of the pairs $(Y_1, Y_2)$ in $p=3$}
  \begin{tabular}{|c|c|c||c|c|c|} \hline
       $Y_1$                &$Y_2$                   &$n$& $Y_1$      &$Y_2$                           &$n$\\ \hline \hline
       $Y^{\textup{I}}_1(E_8)$         & $Y^{\textup{I}}_2(E_7)$           &$1$& $Y^{\textup{IX}}_1(2A_1+D_6)$ & $Y^{\textup{IX}}_2(3A_1+D_4)$ &$0$\\ \hline
       $Y^{\textup{II}}_1(A_8)$      & $Y^{\textup{II}}_2(A_2+A_5)$    &$1$& $Y^{\textup{X}}_1(D_8)$       & $Y^{\textup{X}}_2(A_7)$       &$2$\\ \hline
       $Y^{\textup{III}}_1(A_2+E_6)$ & $Y^{\textup{III}}_2(A_2+A_5)$   &$1$& $Y^{\textup{X}}_1(D_8)$       & $Y^{\textup{X}}_2(A_1+D_6)$   &$1$\\ \hline
       $Y^{\textup{IV}}_1(E_8)$      & $Y^{\textup{IV}}_2(E_7)$        &$1$& $Y^{\textup{XI}}_1(A_1+E_7)$  & $Y^{\textup{XI}}_2(A_1+D_6)$  &$1$\\ \hline
       $Y^{\textup{V}}_1(A_1+E_7)$   & $Y^{\textup{V}}_2(A_1+D_6)$     &$1$& $Y^{\textup{SI}}_1(A_1+A_7)$  & $Y^{\textup{SI}}_2(A_1+2A_3)$ &$0$\\ \hline
       $Y^{\textup{VI}}_1(2D_4)$     & $Y^{\textup{VI}}_2(3A_1+D_4)$   &$0$& $Y^{\textup{(1)}}_1(E_8)$     & $Y^{\textup{(1)}}_2(E_7)$     &$1$\\ \hline
       $Y^{\textup{VIII}}_1(A_3+D_5)$& $Y^{\textup{VIII}}_2(A_1+2A_3)$ &$0$& $Y^{\textup{(2)}}_1(A_2+E_6)$ & $Y^{\textup{(2)}}_2(A_2+A_5)$ &$1$\\ \hline
  \end{tabular}
  \label{table:deg2'}
\end{table}

Analysis similar to that in Remark \ref{deg2rem} shows that $Y^{\textup{II}}_2(A_2+A_5)$ and $Y^{\textup{VI}}_2(3A_1+D_4)$ are unique up to isomorphism.

\begin{lem}\label{deg2isom'}
We have the following isomorphisms:
\begin{enumerate}
    \item[\textup{(1)}] $Y^{\textup{III}}_2(A_2+A_5) \cong Y^{\textup{II}}_2(A_2+A_5) \cong Y^{\textup{(2)}}_2(A_2+A_5)$.
    \item[\textup{(2)}] $Y^{\textup{V}}_2(A_1+D_6) \cong Y^{\textup{X}}_2(A_1+D_6) \cong Y^{\textup{XI}}_2(A_1+D_6)$.
    \item[\textup{(3)}] $Y^{\textup{VI}}_2(3A_1+D_4) \cong Y^{\textup{IX}}_2(3A_1+D_4)$.
    \item[\textup{(4)}] $Y^{\textup{VIII}}_2(A_1+2A_3) \cong Y^{\textup{SI}}_2(A_1+2A_3)$.
\end{enumerate}
\end{lem}

\begin{proof}
We give the proof only for the isomorphism $Y^{\textup{II}}_2(A_2+A_5) \cong Y^{\textup{(2)}}_2(A_2+A_5)$:
the proof of the other assertions run as in \cite[Claim 4.5]{Ye}.

Let $Z \to Y^{\textup{(2)}}_2(A_2+A_5)$ be the blow-up at a general point of a $(-1)$-curve which is not an NEC.
Then $Z$ contains eight $(-2)$-curves whose configuration is the Dynkin diagram $A_8$.
Hence $Z=Y^{\textup{II}}_1(A_8)$ and $Y^{\textup{II}}_2(A_2+A_5) \cong Y^{\textup{(2)}}_2(A_2+A_5)$.
\end{proof}

\begin{lem}\label{E_7'}
It holds that $Y^{\textup{(1)}}_2(E_7) \cong Y^{\textup{I}}_2(E_7)$ and $Y^{\textup{(1)}}_2(E_7) \not \cong Y^{\textup{IV}}_2(E_7)$.
\end{lem}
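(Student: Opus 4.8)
The plan is to imitate the proof of Lemma \ref{E_6}: realize the quasi-elliptic surface $Y^{\textup{(1)}}_2(E_7)$ as a blow-up of $\PP^2_k$ along a curvilinear scheme supported at a flex of a cuspidal cubic, compute its anti-canonical net $\Lambda$ as a net of plane cubics, and then distinguish the elliptic fibration types I and IV by counting the singular members of the pencils inside $\Lambda$. First I would fix coordinates $[x:y:z]$ of $\PP^2_k$ and set $C \coloneqq \{x^3+y^2z=0\}$, the cuspidal cubic whose flex is $t \coloneqq [0:1:0]$ (its tangent there is $\{z=0\}$, which meets $C$ only at $t$). Using the description of the unique type (1) quasi-elliptic surface (Theorem \ref{q-ell}; cf. the proof of Lemma \ref{E_6} and \cite[Remark 5.2]{KN}), $Y^{\textup{(1)}}_2(E_7)$ is obtained by blowing up the seven points of $C$ infinitely near $t$.

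Next I would compute $\Lambda = h_*|-K_Y|$. Restricting cubic monomials to the branch $x \mapsto [x:1:-x^3]$ of $C$ at $t$ and imposing contact of order at least seven, a direct calculation gives
\[
\Lambda = \langle\, x^3+y^2z,\ xz^2,\ z^3\,\rangle \cong \PP^2_{[a:b:c]},
\]
in which the point $P_0 \coloneqq [0:0:1]$ corresponds to the triple tangent line $3\{z=0\}$.

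I would then determine the singular members of $\Lambda$. Writing $F = a(x^3+y^2z)+bxz^2+cz^3$ and using $p=3$ (so $\partial_x F = 3ax^2+bz^2 = bz^2$ and $\partial_z F = ay^2+2bxz$), one checks that $F$ is smooth when $ab\neq 0$, is a cuspidal cubic $(x+\gamma z)^3+y^2z$ with $\gamma^3=c$ when $a\neq 0=b$, and is the non-reduced conic-plus-line $z^2(bx+cz)$ when $a=0$. Hence the discriminant locus is exactly the pair of lines $V(ab)\subset \PP^2_{[a:b:c]}$, whose node is $P_0$. Since the member $3\{z=0\}$ resolves to the additive fibre $\textup{II}^*$ common to types I, IV and (1), the relevant pencils are the lines through $P_0$. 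Such a line either equals $\{a=0\}$ or $\{b=0\}$, all of whose members are singular, or meets $V(ab)$ only at its node $P_0$, in which case the unique singular member is $3\{z=0\}$ and the general member is smooth. Eliminating such a general pencil therefore produces a rational elliptic surface with the single singular fibre $\textup{II}^*$; by the uniqueness in Theorem \ref{extell} this is the type I surface, so $Y^{\textup{(1)}}_2(E_7)\cong Y^{\textup{I}}_2(E_7)$. On the other hand, type IV has the two singular fibres $\textup{II}^*,\textup{I}_1$, so an isomorphism with $Y^{\textup{IV}}_2(E_7)$ would force a pencil through $P_0$ with exactly two singular members; as no such pencil exists, $Y^{\textup{(1)}}_2(E_7)\not\cong Y^{\textup{IV}}_2(E_7)$.

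The main obstacle I anticipate is the characteristic-three bookkeeping in the discriminant computation: because $\partial_x(x^3)=0$, every member on the line $\{b=0\}$ degenerates to a perfect cube plus $y^2z$ and hence becomes \emph{cuspidal} rather than nodal, which is precisely what collapses the would-be second singular member onto $P_0$ and rules out the type IV configuration. I would also need to justify carefully the quasi-elliptic input—that the seven infinitely near points of $Y^{\textup{(1)}}_2(E_7)$ genuinely lie on $C$ at the flex $t$—and to confirm that a general pencil through $P_0$ has smooth, not merely cuspidal, general member, so that the elimination yields an honest elliptic (rather than quasi-elliptic) fibration and the comparison with Table \ref{extell3} is legitimate.
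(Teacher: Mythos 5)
Your proposal is correct and follows essentially the same route as the paper's proof: realize $Y^{\textup{(1)}}_2(E_7)$ as the blow-up of seven points infinitely near the flex $t=[0:1:0]$ of the cuspidal cubic $C$ (the paper gets this from \cite[Example 3.8]{Ito1}, via the pencil $\langle x^3+y^2z, z^3\rangle$), identify the anti-canonical net $\Lambda=\langle x^3+y^2z, xz^2, z^3\rangle$, show the discriminant is the pair of lines through the point corresponding to $3\{z=0\}$, and conclude by matching pencils through that point against the singular-fiber configurations $(\textup{II}^*)$ and $(\textup{II}^*,\textup{I}_1)$ of types I and IV in Table \ref{extell3}. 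Your explicit characteristic-three derivative computation just fills in what the paper labels ``easy to check.''
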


\begin{proof}
First we recall the construction of $Y^{\textup{(1)}}_2(E_7)$.
Fix coordinates $[x: y: z]$ of $\PP^2_k$ and let $C \coloneqq \{x^3+y^2z=0\}$.
By \cite[Example 3.8]{Ito1}, an elimination of a rational map associated to the pencil $\langle x^3+y^2z, z^3\rangle$ is the rational quasi-elliptic surface of type (1).
Thus $Y^{\textup{(1)}}_2(E_7)$ is obtained by blowing up seven points on $C$ infinitely near $t \coloneqq [0:1:0]$ and the anti-canonical linear system of $Y^{\textup{(1)}}_2(E_7)$ corresponds to the linear system $\Lambda=\{ a(x^3+y^2z)+bz^3+cxz^2|[a:b:c] \in \PP^2_k \}$.
It is easy to check that $[0:1:0]$ corresponds to the member $3\{z=0\}$ and the locus of singular members of $\Lambda$ is $\{ac=0\}$.
In particular, a pencil in $\Lambda$ passing through $[0:1:0]$ either consists of singular members or contains exactly one singular member, which is $3\{z=0\}$.

On the other hand, we recall that the configuration of singular fibers of the extremal rational elliptic surface of type I (resp.\,IV) is $(\textup{II}^*)$ (resp.\,$(\textup{II}^*, \textup{I}_1)$), where we use Kodaira's notation.
Since an elimination of a rational map associated to a general pencil in $\Lambda$ passing through $[0:1:0]$ gives the extremal rational elliptic surface of type I, we conclude that $Y^{\textup{(1)}}_2(E_7) \cong Y^{\textup{I}}_2(E_7)$.
On the other hand, there is no pencil in $\Lambda$ passing through $[0:1:0]$ which contains exactly two singular members. Hence $Y^{\textup{(1)}}_2(E_7) \not \cong Y^{\textup{IV}}_2(E_7)$
\end{proof}

In conclusion, there are seven isomorphism classes of $X$ with $K_X^2=2$.

Next, we deal with the case where $K_X^2=3$.
Then $Y=Y_3$, which is the blow-down of an NEC in one of $Y_2$ listed in Table \ref{table:deg2'}.
The pairs of $Y_2$ and $Y_3$ are listed as in Table \ref{table:deg3'}, where $n$ is the number of the NECs on $Y_3$.

    \begin{table}[ht]
\caption{Isomorphism classes of the pairs $(Y_2, Y_3)$ in $p=3$}
  \begin{tabular}{|c|c|c||c|c|c|} \hline
       $Y_2$                &$Y_3$                  &$n$& $Y_2$                 &$Y_3$                 &$n$\\ \hline \hline
       $Y^{\textup{II}}_2(A_2+A_5)$  & $Y^{\textup{II}}_3(3A_2)$      &$0$& $Y^{\textup{X}}_2(A_7)$        & $Y^{\textup{X}}_3(A_1+A_5)$   &$1$\\ \hline
       $Y^{\textup{IV}}_2(E_7)$      & $Y^{\textup{IV}}_3(E_6)$       &$1$& $Y^{\textup{(1)}}_2(E_7)$      & $Y^{\textup{(1)}}_3(E_6)$     &$1$\\ \hline
       $Y^{\textup{V}}_2(A_1+D_6)$   & $Y^{\textup{V}}_3(A_1+A_5)$      &$1$&   &  &\\ \hline
  \end{tabular}
  \label{table:deg3'}
\end{table}

Analysis similar to that in Remark \ref{deg3rem} shows that $Y^{\textup{X}}_3(A_1+A_5)$ is unique up to isomorphism.

\begin{lem}
We have the isomorphism $Y^{\textup{V}}_3(A_1+A_5) \cong Y^{\textup{X}}_3(A_1+A_5)$.
\end{lem}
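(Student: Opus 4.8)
The plan is to mimic the structure of the preceding lemmas that established analogous isomorphisms, in particular Lemma~\ref{E_6} and its characteristic-two analogue, by realizing both surfaces explicitly from the same planar configuration. Since $Y^{\textup{V}}_2(A_1+D_6)$ and $Y^{\textup{X}}_2(A_1+D_6)$ have already been identified in Lemma~\ref{deg1isom'}~(2) (they are isomorphic to $Y^{\textup{XI}}_2(A_1+D_6)$), the cleanest route is to \emph{deduce} the degree-three isomorphism from the degree-two isomorphism already in hand. Indeed, each of $Y^{\textup{V}}_3(A_1+A_5)$ and $Y^{\textup{X}}_3(A_1+A_5)$ is obtained from the corresponding $Y_2$ of type $A_1+D_6$ by blowing down a single NEC, and Table~\ref{table:deg3'} records that in both cases the contracted curve is an NEC on a surface of type $A_1+D_6$.

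First I would invoke Lemma~\ref{deg1isom'}~(2) to fix an isomorphism $\Psi\colon Y^{\textup{V}}_2(A_1+D_6)\xrightarrow{\sim}Y^{\textup{X}}_2(A_1+D_6)$. The remaining task is to check that $\Psi$ carries the NEC contracted to form $Y^{\textup{V}}_3(A_1+A_5)$ to the NEC contracted to form $Y^{\textup{X}}_3(A_1+A_5)$, or at least to an NEC whose contraction yields an isomorphic surface. Here the key point is that the Dynkin type $A_1+D_6$ admits only one NEC up to the automorphisms of the weak del Pezzo surface (as recorded by the entries $n=1$ in Table~\ref{table:deg3'}), so that the choice of NEC is immaterial by the same $\MW$-descent argument used in Remarks~\ref{deg2rem} and~\ref{deg3rem}. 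Once the contracted curves correspond under $\Psi$, the induced map on the blow-downs is an isomorphism $Y^{\textup{V}}_3(A_1+A_5)\cong Y^{\textup{X}}_3(A_1+A_5)$.

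The main obstacle will be verifying that the NEC is genuinely unique up to isomorphism on the $A_1+D_6$ surface, so that the isomorphism $\Psi$ can be chosen compatibly with the chosen NECs. This is precisely the role played by the $\MW(Y_0)$-action in the earlier uniqueness remarks: one must confirm that the relevant Mordell--Weil group acts transitively on the NECs in question, or else argue directly that the two contractions produce isomorphic surfaces regardless of which NEC is chosen. I expect this to reduce to a short citation of the structure of $\MW$ for the ambient genus-one fibration together with the observation, already implicit in the $n=1$ entries of the tables, that there is a single NEC to contract; thus the verification should be brief and in the spirit of the proof of the preceding lemma.
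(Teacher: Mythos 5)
Your overall strategy---deducing the degree-three isomorphism from the degree-two isomorphism of Lemma~\ref{deg1isom'}~(2)---is exactly the paper's; indeed the paper's entire proof of this lemma is that one citation. However, your execution rests on a misreading of Table~\ref{table:deg3'}. The table does \emph{not} record $Y^{\textup{X}}_3(A_1+A_5)$ as the contraction of an NEC on $Y^{\textup{X}}_2(A_1+D_6)$: it pairs $Y^{\textup{X}}_3(A_1+A_5)$ with $Y^{\textup{X}}_2(A_7)$, so that surface is constructed by contracting one of the \emph{two} NECs on the $A_7$-surface, while only $Y^{\textup{V}}_3(A_1+A_5)$ is constructed from an $A_1+D_6$-surface. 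Consequently the step you treat as recorded bookkeeping---``in both cases the contracted curve is an NEC on a surface of type $A_1+D_6$''---is precisely the point that requires proof on the type-X side.

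What is actually needed is path-independence of the notation $Y^{\textup{X}}_3(A_1+A_5)$: the contraction of the unique NEC on $Y^{\textup{X}}_2(A_1+D_6)$ does have configuration $A_1+A_5$ (transport the contraction through the isomorphism with $Y^{\textup{V}}_2(A_1+D_6)$), but one must still identify the resulting surface with the one obtained via the route through $Y^{\textup{X}}_2(A_7)$. In the paper this is supplied by the sentence immediately preceding the lemma, namely that analysis as in Remark~\ref{deg3rem} (descent of the Mordell--Weil action) shows $Y^{\textup{X}}_3(A_1+A_5)$ is unique up to isomorphism, independent of all choices of contracted curves. Your third paragraph worries instead about the uniqueness of the NEC on the $A_1+D_6$-surface, which is vacuous here (the table gives $n=1$), and so never addresses the genuine issue of comparing the two different intermediate degree-two surfaces. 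With the uniqueness statement invoked at that point your argument closes up and coincides with the paper's; without it, the chain $Y^{\textup{V}}_3(A_1+A_5)\cong(\text{NEC-contraction of }Y^{\textup{X}}_2(A_1+D_6))\cong Y^{\textup{X}}_3(A_1+A_5)$ is missing its second link.
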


\begin{proof}
The assertion follows from Lemma \ref{deg2isom'} (2).
\end{proof}

\begin{lem}\label{E_6'}
It holds that $Y^{\textup{(1)}}_3(E_6) \not \cong Y^{\textup{IV}}_3(E_6)$.
\end{lem}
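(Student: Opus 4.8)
The plan is to mirror the argument of Lemma \ref{E_7'}, reducing the non-isomorphism to a count of singular members in pencils of plane cubics. First I would retain the setup of Lemma \ref{E_7'}: fix coordinates $[x:y:z]$ of $\PP^2_k$, put $C=\{x^3+y^2z=0\}$ and $t=[0:1:0]$. Since $Y^{\textup{(1)}}_3(E_6)$ arises from the rational quasi-elliptic surface of type (1) by contracting a section and one further NEC, it is realized by blowing up six points of $C$ infinitely near $t$ (one fewer than in the $E_7$ case). Computing which cubics meet $C$ at $t$ to order at least six, I expect the anti-canonical linear system to be $\Lambda=\{a(x^3+y^2z)+bz^3+cxz^2+dyz^2\mid [a:b:c:d]\in\PP^3_k\}$, with the triple line $3\{z=0\}$ corresponding to the point $P_0=[0:1:0:0]$.

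Next I would determine the locus of singular members of $\Lambda$. Writing $F=a(x^3+y^2z)+bz^3+cxz^2+dyz^2$ and taking partials in characteristic three (where the Euler relation $xF_x+yF_y+zF_z=3F$ vanishes identically, so $F=0$ must be imposed as a separate condition), one finds $F_x=cz^2$, so any singular point forces $z=0$ once $c\neq 0$; a short case analysis then shows the member is smooth precisely when $a\neq 0$ and $c\neq 0$. Hence the singular locus is the reducible quadric $\{ac=0\}=\{a=0\}\cup\{c=0\}$, exactly as in the $E_7$ computation.

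The decisive step is the pencil count. Since $P_0=[0:1:0:0]$ lies on both hyperplanes $\{a=0\}$ and $\{c=0\}$, a line through $P_0$ either lies inside one of them—in which case it is contained in $\{ac=0\}$ and every member is singular (for $a=0$ the member degenerates to $z^2$ times a line)—or meets $\{ac=0\}$ only at $P_0$, so that $3\{z=0\}$ is its unique singular member. In particular, no pencil in $\Lambda$ through $P_0$ contains exactly two singular members. By Table \ref{extell3}, the extremal rational elliptic surface of type IV has singular fibers $\textup{II}^*$ and $\textup{I}_1$; realizing $Y^{\textup{IV}}_3(E_6)$ by eliminating a pencil through $P_0$ would therefore require a pencil with exactly two singular members, which is impossible. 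Thus $Y^{\textup{(1)}}_3(E_6)\not\cong Y^{\textup{IV}}_3(E_6)$.

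The main obstacle I anticipate is the singular-locus computation: in characteristic three the Euler identity is vacuous for cubic forms, so one cannot detect singularities from the partial derivatives alone and must verify $F=0$ at each candidate point. Care is also needed to confirm that the genus one fibrations reconstructing $Y_3(E_6)$ correspond to pencils through $P_0$, which is precisely what links the combinatorial fiber data of Table \ref{extell3} to the geometry of $\Lambda$.
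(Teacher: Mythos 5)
Your proposal is correct and takes essentially the same route as the paper's own proof: the same linear system $\Lambda'=\{a(x^3+y^2z)+bz^3+cxz^2+dyz^2\}$, the same singular locus $\{ac=0\}$, the same observation that a pencil through $[0:1:0:0]$ either lies in $\{ac=0\}$ or meets it only there (hence never has exactly two singular members), and the same comparison with the fiber configuration $(\textup{II}^*,\textup{I}_1)$ of the type IV extremal rational elliptic surface from Table \ref{extell3}. Your explicit characteristic-three computation of the partials (and the remark that the Euler relation is vacuous, so $F=0$ must be imposed separately) simply fills in what the paper dismisses as ``easy to check.''
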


\begin{proof}
We follow the notation of the proof of Lemma \ref{E_7'}.
Then $Y^{\textup{(1)}}_3(E_6)$ is obtained by blowing up six points on $C \subset \PP^2_k$ infinitely near $t$ and the anti-canonical linear system of $Y^{\textup{(1)}}_3(E_6)$ corresponds to the linear system $\Lambda'=\{ a(x^3+y^2z)+bz^3+cxz^2+dyz^2|[a:b:c:d] \in \PP^3_k \}$.
It is easy to check that $[0:1:0:0]$ corresponds to the member $3\{z=0\}$ and the locus of singular members of $\Lambda'$ is $\{ac=0\}$.
In particular, there is no pencil in $\Lambda'$ passing through $[0:1:0:0]$ which contains exactly two singular members.
Hence $Y^{\textup{(1)}}_3(E_6) \not \cong Y^{\textup{IV}}_3(E_6)$.
\end{proof}

Therefore there are four isomorphism classes of $X$ with $K_X^2=3$.

Next, we deal with the case where $K_X^2=4$.
Then $Y=Y_4$, which is the blow-down of an NEC in one of $Y_3$ listed in Table \ref{table:deg3'}.
The pairs of $Y_3$ and $Y_4$ are listed as in Table \ref{table:deg4'}, where $n$ is the number of the NECs on $Y_4$.

    \begin{table}[ht]
\caption{Isomorphism classes of the pairs $(Y_3, Y_4)$ in $p=3$}
  \begin{tabular}{|c|c|c|} \hline
       $Y_3$                &$Y_4$                  &$n$\\ \hline \hline
       $Y^{\textup{IV}}_3(E_6)$      & $Y^{\textup{IV}}_4(D_5)$       &$1$ \\ \hline
       $Y^{\textup{V}}_3(A_1+A_5)$     & $Y^{\textup{V}}_4(2A_1+A_3)$     &$0$ \\ \hline
       $Y^{\textup{(1)}}_3(E_6)$     & $Y^{\textup{(1)}}_4(D_5)$      &$1$\\ \hline
  \end{tabular}
  \label{table:deg4'}
\end{table}

\begin{lem}\label{D_5'}
It holds that $Y^{\textup{(1)}}_4(D_5) \cong Y^{\textup{IV}}_4(D_5)$.
\end{lem}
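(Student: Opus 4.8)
The plan is to adapt the method of Lemma~\ref{A_4} to the present setting: I would realize $Y^{\textup{(1)}}_4(D_5)$ concretely as a blow-up of $\PP^2_k$, write down its anti-canonical linear system, and then exhibit inside that system a pencil whose elimination is the extremal rational elliptic surface of type IV. Following the notation of the proof of Lemma~\ref{E_7'}, set $C \coloneqq \{x^3+y^2z=0\}$ and $t \coloneqq [0:1:0]$; then $Y^{\textup{(1)}}_4(D_5)$ is obtained by blowing up five points on $C$ infinitely near $t$, and its anti-canonical linear system corresponds to
\[
\Lambda'' = \{\, a(x^3+y^2z)+bz^3+cxz^2+dyz^2+ex^2z \mid [a:b:c:d:e] \in \PP^4_k \,\}.
\]
The new feature compared with the $E_6$ case of Lemma~\ref{E_6'} is the monomial $x^2z$, and the whole point will be that this extra freedom produces a pencil with two singular members rather than one.

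Concretely, I would take $G \coloneqq x^3+y^2z+x^2z \in \Lambda''$ and the pencil $\langle z^3, G\rangle$. First I would check that $G$ is a nodal cubic whose unique singular point is the node $[0:0:1]$: since $p=3$ the partial derivatives are $G_x=-xz$, $G_y=-yz$, $G_z=x^2+y^2$, so the only singular point is $[0:0:1]$, and there the leading form $x^2+y^2$ splits into two distinct lines over $k$. Crucially, $[0:0:1]$ is disjoint from the blow-up locus, so this node survives on $Y^{\textup{(1)}}_4(D_5)$. Next I would analyse the members $x^3+y^2z+x^2z+sz^3$ of the pencil: because $3sz^2=0$ in characteristic three, the derivative in $z$ remains $x^2+y^2$ independently of $s$, and a short computation shows that such a member is singular only when $s=0$. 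Hence the pencil $\langle z^3, G\rangle$ has exactly two singular members, namely the triple line $z^3$ and the nodal cubic $G$, while the general member is smooth.

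Eliminating the base points of this pencil therefore yields a relatively minimal rational elliptic surface (honestly elliptic, not quasi-elliptic, since the general member is smooth) whose only singular fibers are a $\textup{II}^*$ coming from $z^3$ and an $\textup{I}_1$ coming from $G$; by Theorem~\ref{extell} this is exactly the type IV surface. Since $Y^{\textup{(1)}}_4(D_5)$ is recovered from it by contracting a section together with the relevant NEC, I would conclude $Y^{\textup{(1)}}_4(D_5) \cong Y^{\textup{IV}}_4(D_5)$. I expect the main obstacle to be the bookkeeping that turns ``the pencil has the two singular members $z^3$ and $G$'' into ``the elimination has singular fibers exactly $\textup{II}^*$ and $\textup{I}_1$'': one must verify that the chain of infinitely near base points at $t$ produces the full $\textup{II}^*$ configuration and that the node of $G$ is genuinely disjoint from that chain, so that it contributes an honest $\textup{I}_1$ rather than being absorbed at $t$. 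This is precisely the step that fails in the $E_6$ and $E_7$ cases, where no such two-singular-member pencil exists.
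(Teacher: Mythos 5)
Your proposal is correct and follows essentially the same route as the paper: the paper's proof likewise works in the notation of Lemma \ref{E_7'}, identifies $\{x^3+y^2z+x^2z=0\}$ as a nodal cubic, and concludes that the elimination of the pencil $\langle z^3,\, x^3+y^2z+x^2z\rangle \subset \Lambda''$ is the extremal rational elliptic surface of type IV. Your additional verifications (the node at $[0:0:1]$ away from the base locus, the fact that $z^3$ and $G$ are the only singular members, and the identification of the fiber types via Theorem \ref{extell}) are details the paper leaves implicit, so the two arguments coincide in substance.
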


\begin{proof}
We follow the notation of the proof of Lemma \ref{E_7'}.
Then $Y^{\textup{(1)}}_4(D_5)$ is obtained by blowing up five points on $C \subset \PP^2_k$ infinitely near $t$ and the anti-canonical linear system of $Y^{\textup{(1)}}_4(D_5)$ corresponds to the linear system $\Lambda''=\{ a(x^3+y^2z)+bz^3+cxz^2+dyz^2+ex^2z|[a:b:c:d:e] \in \PP^4_k \}$.
Since $\{x^3+y^2z+x^2z=0\}$ is a nodal cubic, an elimination of a rational map associated to the pencil $\langle z^3, x^3+y^2z+x^2z \rangle \subset \Lambda''$ gives the extremal rational elliptic surface of type IV.
Hence $Y^{\textup{(1)}}_4(D_5) \cong Y^{\textup{IV}}_4(D_5)$.
\end{proof}

Therefore there are two isomorphism classes of $X$ with $K_X^2=4$.

Finally, we deal with the case where $K_X^2 =5$ or $6$.
When $K_X^2 =5$ (resp. $K_X^2=6$), the pairs of $Y_4$ and $Y=Y_5$ (resp.\ $Y_5$ and $Y=Y_6$) are listed as in Table \ref{table:deg5'} (resp. Table \ref{table:deg6'}), where $n$ is the number of the NECs on $Y_5$ (resp.\ $Y_6$).

    \begin{table}[ht]
\caption{Isomorphism classes of the pairs $(Y_4, Y_5)$ in $p=3$}
  \begin{tabular}{|c|c|c|} \hline
       $Y_4$                &$Y_5$                  &$n$\\ \hline \hline
       $Y^{\textup{(1)}}_4(D_5)$     & $Y^{\textup{(1)}}_5(A_4)$      &$1$\\ \hline
  \end{tabular}
  \label{table:deg5'}
\end{table}

    \begin{table}[ht]
\caption{Isomorphism classes of the pairs $(Y_5, Y_6)$ in $p=3$}
  \begin{tabular}{|c|c|c|} \hline
       $Y_5$                &$Y_6$                  &$n$\\ \hline \hline
       $Y^{\textup{(1)}}_5(A_4)$     & $Y^{\textup{(1)}}_6(A_1+A_2)$      &$0$\\ \hline
  \end{tabular}
  \label{table:deg6'}
\end{table}


Now, we calculate the defining equations of some Du Val del Pezzo surfaces,  which we will use in the proof of Theorems \ref{isom} and \ref{F split Intro}. 


\begin{prop}\label{Artin3}
Let $X$ be a Du Val del Pezzo surface of $\rho(X)=1$ and $p=3$.
If $\Dyn (X) \neq 2D_4$, then its isomorphism class is uniquely determined by its Dynkin type with Artin coindices.
\end{prop}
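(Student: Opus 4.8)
The plan is to mirror the proof of Proposition \ref{Artin2}, adapting the computations to characteristic three. By Definition \ref{GdelPezzo}, when $p=3$ only the types $E_6$, $E_7$, $E_8$ carry more than one Artin coindex, so a Dynkin type of $X$ admits several isomorphism classes only if it contains one of these as a summand. Invoking Theorem \ref{isom} (1), the only such types other than the excluded $2D_4$ are $E_6$, $E_7$, $E_8$, $A_1+E_7$, and $A_2+E_6$; for every other Dynkin type the surface is unique up to isomorphism and there is nothing to prove.

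For the pure cases $\Dyn(X)=E_6, E_7, E_8$ (of degree $K_X^2=3, 2, 1$ respectively, by the rank count $9-K_X^2=6,7,8$), I would follow the method of Propositions \ref{defeq7A_1} and \ref{defeqNBdeg1}. For each minimal resolution $Y$ produced in the characteristic-three analysis above, I would fix a blow-down $h \colon Y \to \PP^2_k$, compute an explicit basis of $\Lambda \coloneqq h_*|-nK_Y|$ with $n=1$ (resp.~$=2$, $=3$) when $K_Y^2 \geq 3$ (resp.~$=2$, $=1$), and realize $X$ as the closure of the image of the resulting map from $\PP^2_k$ to $\PP^3_k$ (resp.~$\PP_k(1,1,1,2)$, $\PP_k(1,1,2,3)$). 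Reading off the defining equation then exhibits the singularity in Artin's normal form and determines $\Dyn'(X)$; assembling these into a table analogous to Table \ref{table:Artin2} would show that the three surfaces in the $E_8$ case (from types I, IV, (1)), the two in the $E_7$ case, and the two in the $E_6$ case (from types IV, (1)) carry pairwise distinct Artin coindices, matching the isomorphism-class counts in Theorem \ref{isom} (1).

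For the compound cases $\Dyn(X)=A_1+E_7$ and $A_2+E_6$, I would argue as in the last part of the proof of Proposition \ref{Artin2}. A suitable blow-down $Y \to Y'$ of the $(-1)$-curve meeting the $A_1$ (resp.~$A_2$) chain lands on the minimal resolution $Y'$ of a Picard rank one del Pezzo surface of type $E_7$ (resp.~$E_6$), whose Artin coindex has already been pinned down in the previous step. Since this blow-down is forced by the configuration of $(-2)$-curves, the coindex of the $E_7$ (resp.~$E_6$) summand of $X$ is inherited from $Y'$, so $\Dyn'(X)$ is determined; tabulating the correspondences (in the style of Table \ref{Artin2-1'}) would finish the argument.

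The main obstacle is the computational core of the middle step: one must recompute, in characteristic three, the explicit bases of the pushed-forward (pluri)anticanonical systems and the resulting weighted hypersurface equations, and then correctly extract the Artin coindex from each normal form. The normal forms for $E_n$ singularities in $p=3$ differ from those in $p=2$, so neither the equations nor the coindex values transfer from Table \ref{table:Artin2}, and the delicate point is to verify that the three $E_8$-surfaces (and the two each in the $E_7$ and $E_6$ cases) are genuinely separated by their coindices, rather than collapsing together or splitting further.
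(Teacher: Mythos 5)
Your proposal follows essentially the same route as the paper's proof: reduce via Theorem \ref{isom} (1) to the types $E_6$, $E_7$, $E_8$, $A_1+E_7$, $A_2+E_6$, pin down the Artin coindices of the pure types by computing defining equations in (weighted) projective space exactly as in Proposition \ref{Artin2} (the paper records the results in Table \ref{table:Artin3}), and settle $A_1+E_7$ and $A_2+E_6$ by blowing down to the already-classified $E_7$ and $E_6$ surfaces (Table \ref{Artin3-1'}). One caveat: your opening inference that a Dynkin type can admit several isomorphism classes only if it contains an $E_n$ summand is false as stated ($2D_4$ in $p=3$ has infinitely many classes although $D_4$ is taut there), but this is harmless here because $2D_4$ is excluded by hypothesis and you re-derive the correct list of remaining types directly from Theorem \ref{isom} (1).
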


\begin{proof}
We follow the notation of Proposition \ref{Artin2}.
By Theorem \ref{isom} (1), we may assume that $\Dyn(X)=E_6, E_7, E_8, A_1+E_7$, or $A_2+E_6$.
When $\Dyn(X) \neq A_1+E_7$ or $A_2+E_6$ in addition, analysis similar to that in the proof of Proposition \ref{Artin2} gives the defining equation of $X$ in $\PP$ as in Table \ref{table:Artin3}.

   \begin{table}[ht]
\caption{Artin coindices and defining equations of Du Val del Pezzo surfaces of Dynkin type $E_6$, $E_7$, or $E_8$ in $p=3$}
 \begin{tabular}{|c|c|l|c|} \hline
          $K_Y^2$       &    $Y$       &\multicolumn{1}{c|}{defining equation of $X \subset \PP$}& $\Dyn'(X)$ \\ \hline \hline
 \multirow{2}{*}{$3$}   &    $Y^{\textup{(1)}}_3(E_6)$& $wz^2+x^3+y^2z       $& $E_6^0$\\ \cline{2-4}
                        &    $Y^{\textup{IV}}_3(E_6)$ & $wz^2+x^3+y^2z+x^2z  $& $E_6^1$\\ \hline\hline
 \multirow{2}{*}{$2$}   &    $Y^{\textup{(1)}}_2(E_7)$    & $w^2+yz^3+xy^3        $& $E_7^0$\\ \cline{2-4}
                        &    $Y^{\textup{IV}}_2(E_7)$   & $w^2+yz^3+xy^3+y^2z^2  $& $E_7^1$\\ \hline \hline 
 \multirow{3}{*}{$1$}   &    $Y^{\textup{(1)}}_1(E_8)$                 & $w^2+z^3+xy^5       $& $E_8^0$\\ \cline{2-4}
                        &    $Y^{\textup{I}}_1(E_8)$                  & $w^2+z^3+xy^5+y^4z  $& $E_8^1$\\ \cline{2-4}
                        &    $Y^{\textup{IV}}_1(E_8)$                  & $w^2+z^3+xy^5+y^2z^2$& $E_8^2$\\ \hline
 \end{tabular}
 \label{table:Artin3}
\end{table}

Next, suppose that $\Dyn(X)=A_1+E_7$ or $A_2+E_6$.
Then a suitable choice of blow-down $Y \to Y'$ gives the minimal resolution $Y'$ of a del Pezzo surface of Picard rank one of type $E_7$ or $E_6$ as in Table \ref{Artin3-1'}. 

    \begin{table}[ht]
\caption{Artin coindices of Du Val del Pezzo surfaces of Dynkin type $A_1+E_7$ or $A_2+E_6$ in $p=3$}
  \begin{tabular}{|c|c|c|} \hline
         $Y$ &$Y'$& $\Dyn'(X)$ \\ \hline \hline
         $Y^{\textup{V}}_1(A_1+E_7)$  & $Y^{\textup{(1)}}_2(E_7)$& $A_1+E_7^0$\\ \hline
         $Y^{\textup{XI}}_1(A_1+E_7)$ & $Y^{\textup{IV}}_2(E_7)$& $A_1+E_7^1$\\ \hline
         $Y^{\textup{(2)}}_1(A_2+E_6)$& $Y^{\textup{(1)}}_3(E_6)$& $A_2+E_6^0$\\ \hline
         $Y^{\textup{III}}_1(A_2+E_6)$& $Y^{\textup{IV}}_3(E_6)$& $A_2+E_6^1$\\ \hline
  \end{tabular}
  \label{Artin3-1'}
\end{table}

Combining these results, we get the assertion.
\end{proof}

We need the following proposition to prove Theorem \ref{F split Intro}.

\begin{prop}\label{defeqNBdeg1char3}
There are coordinates $[x: y: z: w]$ of $\PP_k(1,1,2,3)$ such that the defining equation of the Du Val del Pezzo surface $X(4A_2)$ is 
\begin{align}\label{eq:4A2}
w^2+z^3-x^2y^2(x+y)^2=0.   
\end{align}
\end{prop}

\begin{proof}
Take $Y$ as the minimal resolution of $X$.
Fix coordinates $[s: t: u]$ of $\PP^2_k$.
Then there is $h \colon Y \to \PP^2_k$ such that $h$ is a blowing-up along all the $\FF_3$-rational points on $\{u \neq 0\}$ except $[0:0:1]$ by \cite[Proposition 5.1 (5)]{KN}.
Set 
\begin{align}\label{eq:4A_2coord}
\begin{split}
x \coloneqq& s(s+u)(s-u), \ \ y \coloneqq t(t+u)(t-u), \\
z \coloneqq& (s+u)(s-u)(t+u)(t-u)(s+t+u)(s+t-u),\text{ and }\\
w \coloneqq& u(s+u)(s-u)(t+u)(t-u)(s+t+u)(s+t-u)(s-t+u)(s-t-u).
\end{split}
\end{align}
Then $x, y \in H^0(\PP^2_k, h_*\sO_Y(-K_Y))$, $z \in H^0(\PP^2_k, h_*\sO_Y(-2K_Y))$ and $w \in H^0(\PP^2_k, h_*\sO_Y(-3K_Y))$ 
because $H^0(\PP^2_k, h_*\sO_Y(-nK_Y)) \subset H^0(\PP^2_k, \sO_{\PP^2_k}(3n))$ consists of functions which has zero of order at least $n$ at each points in $h(E_h)$ for $n \geq 1$.
Moreover, it is easy to check that $\{x^3, x^2y, xy^2, y^3, xz, yz, w\}$ is a basis of $H^0(\PP^2_k, h_*\sO_Y(-3K_Y))$.
Hence $X$ is the closure of the image of the rational map
\begin{align*}
    \Phi \colon \PP^2_k &\dashrightarrow \PP_k(1,1,2,3)\\ 
    [s:t:u] & \mapsto [x:y:z:w].
\end{align*}
Substituting (\ref{eq:4A_2coord}) into the left-hand side of (\ref{eq:4A2}) yields zero as a polynomial of $s,t$, and $u$.
Therefore, (\ref{eq:4A2}) is a defining equation of $X(4A_2)$.  
\end{proof}

\subsection{Proof of Theorem \ref{isom}}
In this subsection, we prove Theorem \ref{isom}.


\begin{proof}[Proof of Theorem \ref{isom}]
When $p=0$ (resp.\ $p > 3$), the assertion (1) has already proven by \cite[Theorem 1.2]{Ye} (resp.\ \cite[Theorem B.7]{Lac}).
Thus, we assume that $p=2$ or $3$.

First, we assume that $p=2$.
In this case, the assertion (1) follows from Tables \ref{table:deg1}--\ref{table:deg6} and Lemmas \ref{deg2isom}, \ref{deg2isom-2}, and \ref{deg3isom}--\ref{A_4}.
The assertion (2) follows from Proposition \ref{Artin2}.

Next, we assume that $p=3$.
In this case, the assertion (1) follows from Tables \ref{table:deg1'}--\ref{table:deg6'} and Lemmas \ref{deg2isom'}--\ref{D_5'}.
The assertion (2) follows from Proposition \ref{Artin3}.
\end{proof}

\section{Proof of Theorem \ref{F split Intro}}\label{sec:app}

In this section, we prove Theorem \ref{F split Intro}. 
We also give a corollary of this theorem. 

\begin{prop}\label{ordinary member}
Let $X$ be a Du Val del Pezzo surface. Suppose that $p=2$ and $X$ is $F$-split.  
Then a general anti-canonical member of $X$ is an ordinary elliptic curve.
\end{prop}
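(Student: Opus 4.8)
The plan is to use the hypothesis $p=2$, under which $(1-p)K_X=-K_X$, so that every splitting section of $X$ is an effective anticanonical divisor. First I would observe that the splitting sections form a dense open subset of $V\coloneqq H^0(X,\sO_X(-K_X))\cong \Hom_{\sO_X}(F_*\sO_X,\sO_X)$: composing with the Frobenius defines a linear functional on $V$ which is nonzero because $X$ is $F$-split, so its non-vanishing locus $S\subseteq V$ is open and dense, and the associated divisors sweep out a dense open subset $\bar S\subseteq|-K_X|$. For $p=2$ the divisor of a splitting section has all coefficients equal to one, hence is reduced, and is compatibly split by its own section; restricting the splitting then shows that every member of $\bar S$ is $F$-split as a scheme. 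By Lemma \ref{basic}(1) a general anticanonical member is moreover a reduced locally complete intersection curve of arithmetic genus one. Since being a smooth ordinary elliptic curve is an open condition and $|-K_X|$ is irreducible, it suffices to exhibit one smooth member: it is then automatically $F$-split of arithmetic genus one, hence an ordinary elliptic curve by Remark \ref{remF-split}(3), and density of such members follows at once.

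To produce a smooth member I would build a genus-one fibration from the splitting divisors. Replacing $X$ by its minimal resolution $Y$, which is $F$-split by Remark \ref{remF-split}(2), I pick two splitting sections spanning a pencil $P\subseteq|-K_Y|$; its base points lie on each of the two members, in particular on a splitting divisor. Blowing them up (including infinitely near ones) resolves $P$ into a genus-one fibration $\tilde Y\to\PP^1$. Because every center lies on a splitting divisor and the multiplicity bound $(d-1)(p-1)$ of Lemma \ref{F-split;bl-up} equals one for the blow-up of a point on a surface, each blow-up preserves $F$-splitting, so $\tilde Y$ is $F$-split. As $\tilde Y$ has $K_{\tilde Y}^2=0$ with anticanonical pencil equal to the fibration, its general fibre is a general anticanonical member of the $F$-split surface $\tilde Y$, hence a splitting divisor and in particular $F$-split.

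The heart of the argument, and the step I expect to be hardest, is to rule out a singular general fibre. Since $\tilde Y$ is a smooth surface, its generic fibre is regular and therefore not nodal; thus it is either smooth, giving an elliptic fibration, or non-smooth with cuspidal geometric fibre, giving a quasi-elliptic fibration, as can occur in characteristic two. In the quasi-elliptic case the general closed fibre is a cuspidal rational curve; but a cusp is not $F$-pure in characteristic two by Fedder's criterion, since $y^2+x^3\in(x^2,y^2)$, so such a fibre is not $F$-split, contradicting the previous paragraph. Hence the fibration is elliptic and its general fibre is smooth. Taking this fibre general enough that it avoids the $(-2)$-curves contracted by $Y\to X$ (using that $|-K_X|$ is base point free for $d\ge 2$ by Lemma \ref{basic} and treating the unique base point separately when $d=1$), it descends to a smooth anticanonical member of $X$.

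With one smooth member in hand the conclusion is formal: the smooth members form a nonempty open, hence dense, subset of $|-K_X|$, and its intersection with the dense open locus $\bar S$ is a dense family of smooth $F$-split curves of arithmetic genus one, each an ordinary elliptic curve by Remark \ref{remF-split}(3). I expect the delicate points to be the compatible splitting and $F$-splitting of the splitting divisor itself in characteristic two, the preservation of $F$-splitting across the successive blow-ups at possibly infinitely near base points, and the clean descent of smoothness from $Y$ to $X$ through the Du Val points.
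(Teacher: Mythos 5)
Your skeleton is close to the paper's own argument (a pencil through splitting divisors, elimination of its base locus, and Lemma \ref{F-split;bl-up} to keep $F$-splitness), and one of your variations is genuinely nice: where the paper quotes Patakfalvi--Zdanowicz to get ordinarity of general fibers, you instead observe that all but one member of the anticanonical pencil of $\tilde Y$ is a splitting divisor, hence compatibly split and $F$-split as a scheme, and then kill the quasi-elliptic case by Fedder on the cusp $y^2+x^3$. However, there is one genuine gap, at a step you assert in passing and do not list among your ``delicate points'': the claim that the elimination $\tilde Y\to\PP^1_k$ is an \emph{anticanonical} genus-one fibration (``$K_{\tilde Y}^2=0$ with anticanonical pencil equal to the fibration''). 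This is equivalent to the general member of your pencil $P$ being smooth at every (possibly infinitely near) base point. Since ``singular at $x$'' is a linear condition on a pencil, the failure case is that both spanning splitting divisors are singular at a common base point; then the fiber class is $\pi^*(-K_Y)-\sum_i c_iE_i$ with some $c_i\ge 2$, which still has self-intersection zero but arithmetic genus $1+\tfrac12\sum_i(c_i-c_i^2)\le 0$. In that case you obtain a fibration by rational curves: its smooth $\PP^1$ fibers are perfectly $F$-split, no contradiction with Fedder arises, and no smooth anticanonical member is produced. This is not a formal worry in characteristic two: for degree-two surfaces $\{w^2=b_4(x,y,z)\}\subset\PP_k(1,1,1,2)$ the anticanonical map is purely inseparable and \emph{every} anticanonical member is singular. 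Such surfaces happen not to be $F$-split, but that $F$-splitness excludes this degeneration is exactly the point that requires proof.

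This missing step is precisely where the paper invests its case analysis: it proves that a general member of its pencil is smooth along the base locus, using very ampleness (Lemma \ref{basic} (2)) when $K_X^2\ge 3$, the structure of the unique base point (Lemma \ref{basic} (6)) when $K_X^2=1$, and, when $K_X^2=2$, the inequality $2=(C_1\cdot C_2)\ge \operatorname{mult}_x C_1\cdot\operatorname{mult}_x C_2\ge 4$ to rule out two members sharing a singular point. Your gap can be filled by the same arguments: if two distinct members of $P$ are both singular at a common point then $K_Y^2=D_1\cdot D_2\ge 4$, which settles all degrees up to three, while for $K_X^2\ge 3$ very ampleness plus the characteristic-free tangent-hyperplane incidence count already gives a smooth anticanonical member, making the fibration construction unnecessary in that range. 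The two points you did flag are fine at the paper's own level of rigor: the proof of Lemma \ref{F-split;bl-up} produces, from a splitting section through the center, a splitting section of the blow-up containing the strict transform, and since every base point lies on all members of $P$ and all but one member is a splitting divisor, splittings do propagate through the infinitely near centers (the paper is equally terse here); and descent from $Y$ to $X$ is harmless because a general member of $P$ is disjoint from the $(-2)$-curves.
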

\begin{proof}
We first recall that an elliptic curve is ordinary if and only if it is $F$-split (Remark \ref{remF-split} (3)).
By replacing $X$ with its minimal resolution, we may assume that $X$ is a smooth weak del Pezzo surface.
Then we can take a birational morphism $X\to \PP^2_{[x:y:z]}$.
Note that a general member of $|-K_X|$ is isomorphic to its image on $\PP^2_{[x:y:z]}$.
By Fedder's criterion, a cubic curve in $\PP^2_{[x:y:z]}$ is $F$-split if and only if the coefficient of $xyz$ is not equal to zero.
In particular, the locus consisting of $F$-split members in $|-K_X|$ is open.
Thus, it suffices to find an anti-canonical member which is smooth and $F$-split. 

Since $X$ is $F$-split, there exists a section \[\sigma\in \Hom_{\sO_X}(F_{*}\sO_X, \sO_X)\cong H^0(X, \sO_X(-K_X))\] of the Frobenius map $\sO_X\to F_{*}\sO_X$.
Let $\Sigma\in |-K_X|$ is a Cartier divisor corresponding to $\sigma$.
Let $C\in |-K_X|$ be a general member and $V$ the pencil generated by $C$ and $\Sigma$.
If $K_X^2 \neq 2$, then a general member of $V$ intersects $\Sigma$ transversally by Lemma \ref{basic} (2) and (6).
If $K_X^2=2$, then for general two members $C_1$ and $C_2$ of $V$, both $C_1 \cap \Sigma$ and $C_2 \cap \Sigma$ coincides with $C_1 \cap C_2$.
Moreover, one of them is smooth along $C_1 \cap C_2$ since otherwise $2=K_Y^2=(C_1 \cdot C_2) \geq 4$.
Thus, $C$ is smooth at $C\cap \Sigma$ in both cases.

Let $\phi \colon Z\to \PP^1_k$ be an elimination of a rational map associated to the pencil $V$.
Then $Z$ is normal and $F$-split by Lemma \ref{F-split;bl-up}.
Thus a general $\phi$-fiber is smooth and $F$-split by \cite[Corollary 2.3]{PZ}.
Since a general member of $V$ and $\Sigma$ intersect transversally, a general $\phi$-fiber is isomorphic to its image on $X$. 
In particular, we can take a member of $|-K_X|$ which is smooth and $F$-split, as desired.
\end{proof}

\begin{rem}\label{supersing}
Proposition \ref{ordinary member} does not hold without the assumption of characteristic.
For example, assuming $p=3$, consider a Du Val del Pezzo surface  $\{w^2+z^3+x^2y^2z-x^4z+x^6=0\}\subset \PP_k(1,1,2,3)_{[x:y:z:w]}$.
Then we can see that it is $F$-split, but a general anti-canonical member is $\{w^2+z^3+a^2x^4z-x^4z+x^6=0\}$ with $a \in k \setminus \{\pm 1\}$, which is a supersingular elliptic curve.
\end{rem}

Now we prove Theorem \ref{F split Intro}.

\begin{proof}[Proof of Theorem \ref{F split Intro}]
By Lemma \ref{basic} (1), we may assume that $p=2$ or $3$.
When $p=2$, the assertion follows from Proposition \ref{ordinary member}.

Now suppose that $p=3$. 
Let $Y$ be the minimal resolution of $X$.
If $Y \cong Y^{\textup{(1)}}_1(E_8)$ (resp.~$Y^{\textup{(3)}}_1(4A_2)$) as in Table \ref{table:deg1'}, then $X$ is not $F$-split by Table \ref{table:Artin3} (resp.~Proposition \ref{defeqNBdeg1char3}) and Lemma \ref{Fedder}, a contradiction.
Hence it suffices to show that $Y$ is not isomorphic to $Y^{\textup{(2)}}_1(A_2+E_6)$ by \cite[Theorem 1.4 and Proposition 4.1]{KN}.
On one hand, Table \ref{table:Artin3} and Lemma \ref{Fedder} show that 
$Y^{\textup{(1)}}_3(E_6)$ is not $F$-split.
Combining Table \ref{Artin3-1'} and Remark \ref{remF-split} (1), we conclude that $Y^{\textup{(2)}}_1(A_2+E_6)$ is also not $F$-split.
On the other hand, $Y$ is $F$-split by Remark \ref{remF-split} (2). 
Hence $Y \not \cong Y^{\textup{(2)}}_1(A_2+E_6)$.
\end{proof}

At the end of this paper, let us state a corollary of Theorem \ref{F split Intro}.
The second cohomology of the tangent bundle is important because this contains local-to-global obstructions to deformations (cf.~\cite[Theorem 4.13]{LN}). 
In characteristic $p=2$ or $3$, there exists a Du Val del Pezzo surface $X$ such that $H^2(X, T_X)\neq 0$ by \cite[Remark 1.8]{KN}. On the other hand, if $X$ is $F$-split, then Theorem \ref{F split Intro} shows the following.
Recall that we say that a normal projective surface $S$ has \textit{negative Kodaira dimension} if $\kappa(T, K_T)=-\infty$ for some resolution $T\to S$. 

\begin{cor}\label{tangent}
Let $S$ be a normal projective surface with only Du Val singularities of negative Kodaira dimension.
Suppose that $S$ is $F$-split. Then $H^2(S, T_S)=0$.
\end{cor}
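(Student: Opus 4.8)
The plan is to reduce the vanishing of $H^2(S,T_S)$ to a statement about a smooth relatively minimal model and then apply Theorem \ref{F split Intro}. First I would pass from $S$ to its minimal resolution. Since $S$ has only Du Val singularities, the minimal resolution $\pi\colon Y\to S$ is crepant, so $K_Y=\pi^*K_S$, and $Y$ is a smooth projective surface of negative Kodaira dimension. Moreover, $F$-splitting of $S$ ascends to $Y$ by Remark \ref{remF-split} (2), since the minimal resolution of a Du Val surface preserves the $F$-splitting property. I expect that $H^2(S,T_S)$ can be compared with $H^2(Y,T_Y)$ (or $H^2(Y,T_Y(-\log))$-type sheaves) via the resolution; the cleanest route is to show directly that $H^2(Y,T_Y)=0$ and then descend, using that the Du Val singularities impose no higher cohomological obstruction on the tangent sheaf of $S$.

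Next I would attack the vanishing on $Y$ by Serre duality: $H^2(Y,T_Y)\cong H^0(Y,\Omega^1_Y\otimes\sO_Y(K_Y))^\vee = H^0(Y,\Omega^1_Y(K_Y))^\vee$. So the goal becomes showing that $Y$ has no nonzero global one-forms twisted by the canonical bundle. Here is where the negative Kodaira dimension and the $F$-split hypothesis enter. By Theorem \ref{F split Intro}, applied to the Du Val del Pezzo pieces arising in the minimal model program, a general anti-canonical member is a smooth (ordinary, when $p=2$) elliptic curve; the existence of such a general smooth member of $|-K|$ should force the twisted cotangent cohomology to vanish. Concretely, I would run the minimal model program on $Y$ to reach either a smooth del Pezzo (after contracting the Du Val configuration, landing on a Du Val del Pezzo $X$ with $\rho(X)=1$ covered by Theorem \ref{isom}) or a ruled surface over a curve, and in each case bound $H^0$ of the relevant twisted cotangent sheaf.

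The main obstacle I anticipate is controlling $H^2(T_S)$ directly on the singular surface $S$, since $T_S$ is only a reflexive sheaf and Serre duality is more delicate; the comparison between $H^2(S,T_S)$ and cohomology on $Y$ requires care with the exceptional locus. The cleanest fix is to establish a Leray/local-to-global argument: because the singularities are Du Val (hence rational and canonical), the higher direct images $R^i\pi_*T_Y$ are controlled, and the local contributions $H^0$ of the sheaf $R^1\pi_*$ supported on the exceptional curves vanish for rational double points. Thus $H^2(S,T_S)$ injects into (or agrees with) $H^2(Y,\pi^*T_S)$, and the del Pezzo case reduces precisely to the surfaces classified in Theorem \ref{isom}, where $F$-splitting has been verified to be equivalent to the Fedder condition of Lemma \ref{Fedder}. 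I would finish by checking, using the explicit equations in Tables \ref{table:Artin2} and \ref{table:Artin3} together with Theorem \ref{F split Intro}, that on every $F$-split member of the classification the smooth general anti-canonical curve forces $H^0(Y,\Omega^1_Y(K_Y))=0$, hence $H^2(S,T_S)=0$.
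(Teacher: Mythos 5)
Your reduction to the minimal resolution is exactly where the argument breaks down. The comparison between $H^2(S,T_S)$ and $H^2(Y,T_Y)$ goes in the wrong direction for your purposes. Since $R^1\pi_*T_Y$ is supported on finitely many points, the Leray spectral sequence gives an exact sequence
\[
H^0(S,R^1\pi_*T_Y)\to H^2(S,\pi_*T_Y)\to H^2(Y,T_Y)\to 0,
\]
so $H^2(Y,T_Y)$ is a \emph{quotient} of $H^2(S,\pi_*T_Y)$, not something the cohomology on $S$ injects into; its vanishing says nothing about $S$ unless you also kill the local term $H^0(S,R^1\pi_*T_Y)$. And your claim that this local contribution vanishes for rational double points is false: already for an $A_1$-singularity, with exceptional $(-2)$-curve $E\cong\PP^1_k$, the sequence $0\to T_E\to T_Y|_E\to N_{E/Y}\to 0$ gives a surjection $H^1(E,T_Y|_E)\to H^1(E,\sO_E(-2))\neq 0$, and the theorem on formal functions then yields $R^1\pi_*T_Y\neq 0$. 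Dually, in the paper's formulation via Serre duality, $H^2(S,T_S)\cong H^0(S,\Omega_S^{[1]}\otimes\sO_S(K_S))$ and $H^2(Y,T_Y)\cong H^0(Y,\Omega^1_Y\otimes\sO_Y(K_Y))$; since $K_Y=\pi^*K_S$, the projection formula gives
\[
H^0(Y,\Omega^1_Y\otimes\sO_Y(K_Y))=H^0(S,\pi_*\Omega^1_Y\otimes\sO_S(K_S))\subseteq H^0(S,\Omega_S^{[1]}\otimes\sO_S(K_S)),
\]
an inclusion of the resolution's group \emph{into} the group you want to kill, with equality only if reflexive twisted $1$-forms on $S$ extend to regular ones on $Y$ — precisely the kind of statement that fails in characteristic $2$ and $3$. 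Indeed, the existence of Du Val del Pezzo surfaces with $H^2(X,T_X)\neq 0$ (\cite[Remark 1.8]{KN}) shows that no argument controlling only the resolution's $H^2(T)$ can suffice; this local-to-global discrepancy is the whole point of the corollary.

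The paper's proof avoids the resolution altogether: it runs the MMP on the singular surface $S$ itself, producing a birational contraction $\phi\colon S\to S'$ onto a Mori fiber space $S'\to B$, and uses the injection $\phi_*(\Omega_S^{[1]}\otimes\sO_S(K_S))\hookrightarrow(\phi_*(\Omega_S^{[1]}\otimes\sO_S(K_S)))^{**}=\Omega_{S'}^{[1]}\otimes\sO_{S'}(K_{S'})$. This yields $H^0(S,\Omega_S^{[1]}\otimes\sO_S(K_S))\subseteq H^0(S',\Omega_{S'}^{[1]}\otimes\sO_{S'}(K_{S'}))$, an inclusion in the \emph{useful} direction: vanishing on the end product $S'$ implies vanishing on $S$. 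Then $F$-splitting descends to $S'$ by Remark \ref{remF-split} (1), and one concludes via Theorem \ref{F split Intro} combined with the argument of \cite[Proposition 3.1]{KN} when $\dim B=0$, and via \cite[Theorem 5.3 (1)]{Kaw} when $\dim B=1$. Two further gaps in your sketch, secondary to the one above: you never treat the case where the MMP ends with a fibration over a curve (the paper needs \cite[Theorem 5.3 (1)]{Kaw} there), and the implication ``a general member of $|-K|$ is smooth $\Rightarrow H^0(\Omega^{[1]}\otimes\sO(K))=0$'' is only asserted (``should force'') rather than proved; the paper invokes \cite[Proposition 3.1]{KN} for it.
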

\begin{proof}
By running a $K_S$-MMP, we obtain a birational contraction $\phi\colon S\to S'$ and a Mori fiber space $S'\to B$. 
Note that $K_S$ is not pseudo-effective because $S$ has only Du Val singularities and negative Kodaira dimension.
Since 
\[\phi_{*}(\Omega_S^{[1]}\otimes \sO_S(K_S))\hookrightarrow (\phi_{*}(\Omega_S^{[1]}\otimes \sO_S(K_S)))^{**}=\Omega_{S'}^{[1]}\otimes \sO_{S'}(K_{S'}),
\] the Serre duality yields
\[
H^2(S, T_S)\cong H^0(S, \Omega_S^{[1]} \otimes \sO_S(K_S))\subset H^0(S, \Omega_{S'}^{[1]}\otimes \sO_{S'}(K_{S'})),
\] 
where $\Omega_S^{[1]}$ denotes the reflexive hull of $\Omega_S$.
When $\dim \, B=1$, then the assertion follows from \cite[Theorem 5.3 (1)]{Kaw2}. 
Now we assume that $\dim B=0$. Since $S'$ is $F$-split
by Remark \ref{remF-split} (1), it follows that a general member of $|-K_{S'}|$ is smooth by Theorem \ref{F split Intro}. Hence the proof similar to \cite[Proposition 3.1]{KN} shows that $H^0(S, \Omega_{S'}^{[1]}\otimes \sO_{S'}(K_{S'}))=0$.
\end{proof}


\section*{Acknowledgements}
The authors would like to thank Shou Yoshikawa for useful discussion.
They also wish to express their gratitude to Professor Keiji Oguiso and Professor Shunsuke Takagi for helpful advice.
The first author was supported by JSPS KAKENHI Grant Number JP19J21085 and JP22J00272.
The second author was supported by JSPS KAKENHI Grant Number JP19J14397.


\end{document}